\definecolor{orange}{RGB}{255,127,0}
\theoremstyle{plain}
\newtheorem{thm}{Theorem}
\newtheorem{lem}[thm]{Lemma}
\newtheorem{prop}[thm]{Proposition}
\theoremstyle{definition}
\theoremstyle{remark}
\newtheorem*{rem*}{Remark}
\newtheorem{rem}{Remark}
\newcommand{\R}{\mathbb{R}}
\newcommand{\N}{\mathbb{N}}
\newcommand{\C}{\mathbb{C}}
\renewcommand{\H}{\mathbb{H}}
\renewcommand{\leq}{\leqslant}
\renewcommand{\le}{\leq}
\renewcommand{\geq}{\geqslant}
\renewcommand{\ge}{\geq}
\newcommand{\qmu}{q_{\,x}^{(-\mu)}(t)}
\newcommand{\Amuplus}{A_{x}^{(\mu)}}
\newcommand{\Amu}{A_{x}^{(-\mu)}}
\def\({\left(}
\def\){\right)}
\def\[{\left[}
\def\]{\right]}
\def\<{\langle}
\def\>{\rangle}
\title{Hitting times of Bessel processes\footnotetext{2010 MS Classification:
    Primary 60J65; Secondary 60J60.
    {\it Key words and phrases}: Bessel process, first hitting time, geometric Brownian motion, hyperbolic Brownian motion.  Research  supported by  Polish Ministry of Science and Higher Eduction 
    grant N N201 3731 36  }
    }
\author{T. Byczkowski, J. Ma\l{}ecki, M. Ryznar\\
 Institute of Mathematics and Computer Sciences,\\
  Wroc\l{}aw University of Technology, Poland}
\begin{document}
\maketitle
\begin{abstract}
Let $T_1^{(\mu)}$ be the first hitting time of the point $1$ by  the Bessel process with index $\mu\in \R$ starting  from $x>1$. Using an integral formula for the density $q_x^{(\mu)}(t)$ of $T_1^{(\mu)}$, obtained in \cite{BR:2006}, we prove sharp estimates of the density of $T_1^{(\mu)}$ which {exhibit} the dependence both on   time and space variables. Our result  provides   optimal {uniform} estimates for the density of the hitting time  of the unit ball by the  Brownian motion in $\R^n$, which improve existing bounds. Another  application is to provide  sharp estimates for the Poisson kernel for half-spaces for hyperbolic Brownian motion in real hyperbolic spaces.
 
\end{abstract}

\section{Introduction}
{\it Bessel processes} play a prominent  role both in the theory of Brownian motion (see \cite{McKean:1960} and  \cite{ItoMcKean:1974}) as well as in various theoretical and practical applications. The n-dimensional Bessel process appears quite naturally as the n-dimensional Euclidean norm of  Brownian motion; the more intriguing applications are related to the celebrated {Ray-Knight} theorems describing the behaviour of the local time of Brownian motion in terms of two-dimensional (quadratic) Bessel process (see \cite{RevuzYor:2005}, \cite{Yor:1992} or \cite{BorodinSalminen:2002}). There is also an intimate relation between Bessel processes and
{\it the geometric Brownian motion} \cite{Lamperti:1972} ({the so-called Lamperti's theorem} - see Preliminaries). Bessel processes also appear when representing  
important {\it jump L\'evy processes} by means of {\it traces} of some multidimensional diffusions ({\it Bessel-Brownian diffusions}); see \cite{Molcanov:1969}, \cite{BMR2:2010}. Another important application consists of the fact that {\it the hyperbolic Brownian motion}, i.e. the canonical diffusion in the real hyperbolic space, can be represented as subordination of the standard Brownian motion
via an exponential functional of geometric Brownian motion. Thus, from Lamperti's theorem, {\it the Poisson kernel of half-spaces} for hyperbolic Brownian motion can be represented via subordination of the standard Brownian motion by the hitting  time of a Bessel process (see \cite{BR:2006}). We exploit this relationship for obtaining the precise bounds of 
the Poisson kernel of half-spaces for hyperbolic Brownian motion with drift.

Our main goal is to study estimates of the density of the distribution of the first hitting time $T^{(\mu)}_{a}$ of a level $a>0$ by a Bessel process starting from $x>a$. Our approach is based on an integral  formula for the density of $T^{(\mu)}_{a}$, given in the paper 
\cite{BR:2006} {(see also \cite{BGS:2007})}. This formula, although quite complex, proves to be a very effective one.
 {We deal with  processes with non-positive indices}, however our conclusions are valid also for positive ones, since the processes are equivalent on the sigma-algebra generated by the process before hitting $0$.
 
  To formulate our results denote by
   $R^{(\mu)}=\{R_t^{(\mu)},t\geq 0\}$ a Bessel process with index $\mu\in\R$. We note by $\textbf{P}_x^{(\mu)}$ the probability law of a Bessel process $R^{(\mu)}$ with index $\mu$ on the canonical paths space with starting point $R^{(\mu)}_0 =x$, where $x>0$. 
Let us denote the first hitting time of the level $a> 0$ by a Bessel process with index $\mu$
\begin{eqnarray*}
   T^{(\mu)}_a = \inf\{t>0; R^{(\mu)}_t=a\}\/.
\end{eqnarray*} 
  Our main result is a sharp estimate of the density of the hitting distribution if $x>a$.  By the scaling property of Bessel processes it is enough to consider $x>a=1$. The distribution of $T^{(\mu)}_1$ is well known for $\mu=\pm1/2$ when both distributions have the  $1/2$-stable positive distribution with scale parameter $\lambda=x-1$, however incomplete for $\mu=1/2$. 
 Our upper and lower estimates are comparable in both time and space domains. We have the following:
\begin{center}
{\bf Uniform estimate for a density function of $T_1^{(\mu)}$.}
\end{center}
  For every $x>1$ and $t>0$ we have
  \begin{eqnarray*}
     \frac{\textbf{P}_x^{(\mu)}(T_1^{(\mu)}\in dt)}{dt} \approx (x-1)\left(\frac{1}{1+x^{2\mu}}\right)\frac{ {e^{-(x-1)^2/2t}}}{t^{3/2}} \frac{ x^{2|\mu|-1} }{t^{|\mu|-1/2}+ x^{|\mu|-1/2}}\/,\quad \mu\neq 0\/.
  \end{eqnarray*}
  Moreover, we have
  $$
    \frac{\textbf{P}_x^{(0)}(T_1^{(0)}\in dt)}{dt} \approx
  (x-1) e^{-(x-1)^2/2t} \frac{(x+t)^{1/2}}{x t^{3/2}} 
  \frac {1+\log x}{(1+\log (1+\frac tx))(1+\log (t+x))}\/.
  $$
Here $ f\approx g$ means that there exist strictly positive constants $c_1$ and $c_2$ depending only on $\mu$ such that $c_1\le f/g\le c_2$.
 
Our setup includes the case of the hitting distribution of a unit ball by a Brownian motion starting from the exterior of the ball.
To state this result,
let $\sigma^{(n)}$ be the first hitting time of a unit ball by $n$-dimensional Brownian motion $W^{(n)}=\{W_t^{(n)},t\geq 0\}$, i.e.
\begin{eqnarray*}
   \sigma^{(n)} = \inf\{t>0; |W^{(n)}_t|=1\}\/.
\end{eqnarray*}
\begin{center}
{\bf Uniform estimate for hitting the unit ball by Brownian motion.}
\end{center}
For $W^{(n)}_0 = x\in\R^n$ such that $|x|>1$ we have
\begin{eqnarray*}
 \frac{  P^{x}(\sigma^{(n)}\in dt)}{dt} \approx \frac{|x|-1}{|x|}\frac{ {e^{-(|x|-1)^2/2t}}}{t^{3/2}} \frac{ 1}{t^{(n-3)/2}+ |x|^{(n-3)/2}}\/, \quad n>2,
\end{eqnarray*}
for every $t>0$. Moreover, we have
\begin{eqnarray*}
  \frac{P^{x}(\sigma^{(2)}\in dt)}{dt} \approx  \frac{|x|-1}{|x|} e^{-(|x|-1)^2/2t} \frac{(|x|+t)^{1/2}}{ t^{3/2}} 
  \frac {1+\log |x|}{(1+\log (1+\frac t{|x|}))(1+\log (t+|x|))}\/.
\end{eqnarray*}
%

 To the best of our knowledge even for the planar Brownian motion our results are new and considerably complement existing results obtained in \cite{Grigoryan:2002} (see also \cite{Collete:2000}), where the estimates are {only} sharp in the  region $t \ge |x|^2$  with sufficiently large  starting point $x$. If  $t<|x|^2$ the bound obtained in \cite{Grigoryan:2002} has an  exponential term of the form $\exp\{-c_i\frac{|x|^2}{t}\}$ with different constants $c_1, c_2$   for the lower and the upper estimate, respectively. We remove this  obstacle and provide sharp estimates which are of the same order in the full range of $t$ and $x$. We also provide sharp estimates for the survival probability $
    \textbf{P}_x^{(\mu)}(t<T_1^{(\mu)}<\infty)$. The asymptotic result if $t\to \infty$ is due to Hunt \cite{Hunt:1956} in the case of the planar Brownian motion and Port \cite{Port:1969b} in the context of Brownian motion in higher dimensions. { Recently, a result about the asymtotic behaviour of the hitting density for the planar Brownian, when $t\to \infty$, was established by Uchiyama \cite{Uchiyama:2010}. His result gives a very accurate expansion  of the hitting density provided $ |x|^2$ is small relative to $t$, that is when the exponential term is negligable. When this is not true the error term of the expansion in \cite{Uchiyama:2010} may be much bigger than the leading term. Therefore our estimate in the case of the planar Brownian motion is much more accurate, when we assume that $t$ is not too large  with respect to $|x|^2$, since the impact of of the exponential term may be  significant and it is reflected in our estimates. Moreover, we give a very exact estimate of the density in the situation when $t$ is small relative to $|x|$ (see Lemma \ref{qt:estimate:zero} and Remark \ref{remark1}). }

The organization of the paper is as follows. After Preliminaries, in Section 3, we provide uniform estimates of the density function of the first hitting time $T^{(\mu)}_{1}$. This section is basic for further applications, which are collected in the next section. 
We first  provide the estimates of the {\it survival times} of a killed Bessel process and, finally, compute the precise bounds  of the Poisson kernel of a halfspace  for hyperbolic Brownian motion with drift.
Appendix contains various estimates of quantities involved in the basic formula for the density function of $T_1^{(\mu)}$, which, although quite laborious,  but at the same time, are indispensable ingredients of the proof of the main result.

Throughout the whole paper $ f\approx g$ means that there exists a strictly positive constant $c$  depending only on $\mu$ such that $c^{-1}g\le f \le c g$. If the comparability constant will also depend on some other parameters $\gamma_1, \gamma_2,\dots$ we will write $ f\stackrel c{\approx} g$, $c=c(\gamma_1, \gamma_2,\dots)$. Also in a string of inequalities a constant may change from line to line which might not be reflected in notation. Moreover we consistently do not exhibit dependence of $c$ on $\mu$ in inequalities of type $f\le c g $.

\section{Preliminaries}
\subsection{Modified Bessel functions}

Various formulas appearing throughout the paper are expressed in terms of {\it modified Bessel functions}  $I_{\vartheta}$ and $K_{\vartheta}$. For convenience we collect here basic information about these functions. 

The \textit{modified Bessel functions of the first} and \textit{second kind} are independent solutions to the \textit{modified Bessel differential equation}
\begin{eqnarray*}
  z^2y'' +  zy' - (\vartheta^2+z^2)y =0\/,
\end{eqnarray*}
where $\vartheta \in \R$. The Wronskian of the pair $\left\{K_\vartheta(z),I_\vartheta(z)\right\}$ is equal to 
\begin{eqnarray}
  \label{Wronskian}
  W\left\{K_\vartheta(z),I_\vartheta(z)\right\} = I_\vartheta(z) K_{\vartheta+1}(z)+I_{\vartheta+1}(z)K_\vartheta(z) = \frac{1}{z}\/.
\end{eqnarray}

In the sequel we will use the asymptotic behavior of $I_\vartheta$ and $K_\vartheta$ at zero as well as at infinity. For every $\vartheta \geq 0$ we have (see \cite{AbramowitzStegun:1972} 9.6.7 and 9.6.12)
\begin{eqnarray}
   \label{I_atzero}
   I_\vartheta(r) &=&  \frac{r^\vartheta}{2^\vartheta\Gamma(\vartheta+1)} +O(r^{\vartheta+2})\,,\quad r\to 0^{+}\/, \vartheta >0\/.
\end{eqnarray}
For $\vartheta>0$, we have (\cite{AbramowitzStegun:1972} 9.6.9 and 9.6.13)
 \begin{eqnarray}
   \label{K_atzero}
   K_\vartheta(r)&\cong& {\frac{2^{\vartheta-1}\Gamma(\vartheta)}{r^{\vartheta}}}\,,
   \quad r\to 0^+, \label{asympt_K_0}
 \end{eqnarray}
 where  $g(r) \cong f(r) $ means that the ratio of $g$ and $f$ tends to $1$. Moreover, in the case $\vartheta = 0$, we have (see \cite{AbramowitzStegun:1972} 9.6.13)
 \begin{eqnarray}
    \label{K0_atzero}
    K_0(r) = -\log\frac{r}{2}I_0(r) + O(1)\/,\quad r\to 0^{+}\/.
 \end{eqnarray}

 The behavior of $I_\vartheta$ and $K_\vartheta$ at infinity is described as follows (see \cite{AbramowitzStegun:1972} 9.7.1, 9.7.2)
\begin{eqnarray} 
\label{asymp_I_infty}
I_\vartheta(r)= {\frac{e^r}{\sqrt{2\pi r}}} (1+ O(1/r))\,,\quad r\to \infty\/,
\end{eqnarray}
\begin{eqnarray}
\label{asymp_K_infty}
K_\vartheta(z) = \sqrt{\frac{\pi}{2z}}\,e^{-z}(1+O(1/z))\/,\quad |z|\to \infty\/,
\end{eqnarray}
where the last equality is true for every complex $z$ such that $|arg z|<\frac{3}{2}\pi$.

\subsection{Bessel process and exponential functionals of Brownian motion}
In the following section we introduce notation and basic facts about Bessel processes. We follow the exposition given in \cite{MatsumotoYor:2005a} and \cite{MatsumotoYor:2005b}, where we refer the Reader for more details and deeper insight into the subject (see also \cite{RevuzYor:2005}). 

 We denote by $\textbf{P}_x^{(\mu)}$ the probability law of a Bessel process $R^{(\mu)}$ with index $\mu$ on the canonical paths space with starting point $R^{(\mu)}_0 =x$, where $x>0$. Let $\mathcal{F}^{(\mu)}_t=\sigma\{R^{(\mu)}_s,s\leq t\}$ be the filtration of the coordinate process $R^{(\mu)}_t$. The state space of $R^{(\mu)}$ depends on the value of $\mu$ and the boundary condition at zero. For simplicity, in the case $-1<\mu<0$ (then the point $0$ is non-singular), we impose killing condition on $0$. However, the exact boundary condition at $0$ is irrelevant from our point of view, because we will only consider the process $R^{(\mu)}$ up to the first hitting time of the strictly positive level. 

Let us denote the first hitting time of the level $a> 0$ by a Bessel process with index $\mu$
\begin{eqnarray*}
   T^{(\mu)}_a = \inf\{t>0; R^{(\mu)}_t=a\}\/.
\end{eqnarray*}
Observe that for $\mu\leq 0$ we have $T^{(\mu)}_a<\infty$ a.s. and $\textbf{P}^{(\mu)}_x(T^{(\mu)}_a=\infty)>0$ whenever $\mu>0$. Using the scaling property of Bessel processes, which is exactly the same as the scaling property of one-dimensional Brownian motion, we get  for every $b>0$ and $t>0$  
\begin{eqnarray*}
   {\textbf{P}_{bx}^{(\mu)}(T^{(\mu)}_{ba}<t) = \textbf{P}_{x}^{(\mu)}(b^2T^{(\mu)}_{a}<t)}\/, \quad x>a>0\/.
\end{eqnarray*}
Therefore, from now on we do assume that $a=1$ and $x>1$. We denote the density function of $T_1^{(\mu)}$ with respect to Lebesgue measure by $q_x^{(\mu)}$, i.e. 
\begin{eqnarray*}
   q_x^{(\mu)}(t) = \frac{\textbf{P}_{x}^{(\mu)}(T^{(\mu)}_{1}\in dt)}{dt}\/,\quad t>0\/,x>1\/.
\end{eqnarray*}

We have the absolute continuity property for the laws of the Bessel processes with different indices
\begin{eqnarray}
  \label{Bessel:AC}
  \left. \frac{d\textbf{P}^{(\mu)}_x}{d\textbf{P}^{(\nu)}_x}\right|_{\mathcal{F}^{(\nu)}_t} = \left(\frac{{R_t}}{x}\right)^{\mu-\nu}\exp\left(-\frac{\mu^2-\nu^2}{2}\int_0^t\frac{ds}{{(R_s)}^2}\right)\/,\quad {\textbf{P}^{(\nu)}_x}\textrm{ - a.s. on }\{T^{(\nu)}_0>t\}\/.
\end{eqnarray}
Here $T_0^{(\mu)}$ denotes the first hitting time of $0$ by $R^{(\mu)}$. If $\nu\geq 0$ then the condition $\{T^{(\nu)}_0>t\}$ can be omitted. In particular, for $\nu=-\mu$, where $\mu\geq 0$ we have
\begin{eqnarray*}
   \left. \frac{d\textbf{P}^{(\mu)}_x}{d\textbf{P}^{(-\mu)}_x}\right|_{\mathcal{F}^{(-\mu)}_t} = \left(\frac{{R_t}}{x}\right)^{2\mu}\/,\quad {\textbf{P}^{(-\mu)}_x}\textrm{ - a.s. on }\{T^{(-\mu)}_0>t\}\/.
\end{eqnarray*}
Consequently, for $\mu\geq 0$ and $x>1$ we get that
\begin{eqnarray}
   \label{BesselTime:indices}
   q_x^{(\mu)}(t) = \left(\frac1x\right)^{2\mu}\qmu\/.
\end{eqnarray}

We denote by $B=\{B_t,t\geq 0\}$ the one-dimensional Brownian motion starting from $0$ and by $B^{(\mu)}=\{B_t^{(\mu)}=B_t+\mu t,t\geq 0\}$ the Brownian motion with constant drift $\mu\in \R$. 
The process $X^{(\mu)} = \{x\exp(B^{(\mu)}_t),t\geq 0\}$ is called a \textit{geometric Brownian motion} or \textit{exponential Brownian motion} with drift $\mu\in\R$ starting from $x>0$. 

For $x>1$ let $\tau$ be the first exit time of the geometric Brownian motion with drift $\mu$ from the set $(1,\infty)$ 
\begin{eqnarray*}
   \tau = \inf\{s>0; x\exp(B_s+\mu s)=1\}\/.
\end{eqnarray*}
We have $\tau<\infty$ a.s. whenever $\mu\leq 0$ since then $\inf_{t\geq 0}B^{(\mu)}(t) = -\infty$.

For $x>0$ we consider the integral functional 
\begin{eqnarray*}
\Amuplus(t)= \int_0^t (X_s^{(\mu)})^2ds =  x^2\int_0^t\exp(2B_s+2\mu s)ds\/.
\end{eqnarray*}

The crucial fact which establishes the relation between Bessel processes, the integral functional $\Amuplus$ and the geometric Brownian motion is the Lamperti relation saying that there exists a Bessel process $R^{(\mu)}$ such that 
\begin{eqnarray*}
  x\exp(B_t^{(\mu)}) = R^{(\mu)}_{A^{(\mu)}_x(t)}\/,\quad t\geq 0\/.
\end{eqnarray*} 
Consequently, we get 
\begin{eqnarray}
   \label{Amu_Tmu}
   A^{(\mu)}_x(\tau) \stackrel{d}{=} T_1^{(\mu)}\/.
\end{eqnarray}


\subsection{Representation of hitting time density function}
We recall the result of \cite{BR:2006}, where the integral formula for the density of $\Amu(\tau)$ was given. According to (\ref{Amu_Tmu}), as immediate consequence, we obtain the formula for $\qmu$.  Note also that in the paper \cite{BR:2006} different normalizations of Brownian motion and different definition of geometric Brownian motion were used and consequently we have $\qmu =  {q_\mu}(t/2)/2$, where ${q_\mu}(t)$ is the density function considered in \cite{BR:2006}.

 
 \begin{thm}[{[Byczkowski, Ryznar 2006]}]\label{rep}
For $\mu \geq 0$ there is a function $w_\lambda$ such that
  \begin{eqnarray}\label{rep1}
    \qmu = \lambda \frac{e^{-\lambda^2/2t}}{ \sqrt{2\pi t} }
   \(\frac{x^{\mu-1/2}}{t} +
    \int_0^\infty \(e^{-\kappa/2t} - 1\) w_\lambda(v)dv \) \/,
    \end{eqnarray}
  where $\kappa = \kappa (v)=(\lambda+v)^2 - \lambda^2 = v(2\lambda+v)$,
  and $\lambda=x-1$.
%
\end{thm}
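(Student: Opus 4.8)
The plan is to work entirely on the Laplace transform side: first identify $\int_0^\infty e^{-st}\,\qmu\,dt$ explicitly as a quotient of modified Bessel functions, then rewrite that quotient in a form whose inverse Laplace transform can be read off summand by summand, producing $w_\lambda$ along the way. Since $R^{(-\mu)}$ has generator $\frac12\partial_r^2+\frac{1-2\mu}{2r}\partial_r$, the function $u(r):=\textbf{E}_r^{(-\mu)}\big[e^{-sT_1^{(-\mu)}}\big]$ solves $\frac12u''+\frac{1-2\mu}{2r}u'=su$ on $(1,\infty)$ with $u(1)=1$ and $u$ bounded (in fact $\to0$) at $+\infty$; its solution space is spanned by $r^{\mu}I_{-\mu}(r\sqrt{2s})$ and $r^{\mu}K_\mu(r\sqrt{2s})$ (with $K_{-\mu}=K_\mu$), of which only the $K_\mu$-part is bounded at infinity by $\pref{asymp_I_infty}$ and $\pref{asymp_K_infty}$. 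Matching $u(1)=1$ and writing $z=\sqrt{2s}$,
$$\int_0^\infty e^{-st}\,\qmu\,dt \;=\; F(z)\;:=\;\frac{x^{\mu}K_\mu(xz)}{K_\mu(z)}\/.$$

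Next I would peel off the factor $e^{-\lambda z}$, $\lambda=x-1$, which alone is the entire answer for $\mu=\pm1/2$ (the positive $1/2$-stable law). Inserting the classical integral representation $K_\mu(z)=\frac{\sqrt\pi\,(z/2)^\mu}{\Gamma(\mu+1/2)}\int_1^\infty e^{-zr}(r^2-1)^{\mu-1/2}dr$ (valid for $\mu>-1/2$, hence for all $\mu\ge0$; see \cite{AbramowitzStegun:1972}) into the numerator and denominator of $F$ and substituting $r=1+w$ in the denominator and $r=(x+\sigma)/x$ in the numerator yields
$$F(z)=e^{-\lambda z}\,\frac{N(z)}{D(z)}\/,\qquad N(z)=\int_0^\infty e^{-z\sigma}(2x\sigma+\sigma^2)^{\mu-1/2}d\sigma\/,\qquad D(z)=\int_0^\infty e^{-zw}(2w+w^2)^{\mu-1/2}dw\/.$$
Thus $D$ is the $x=1$ instance of $N$, and comparing the two integrands as $\sigma,w\to0^+$ gives $N(z)/D(z)\to x^{\mu-1/2}$ as $z\to\infty$. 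Writing $N(z)/D(z)=x^{\mu-1/2}+R(z)$, the numerator of $R$ is $N(z)-x^{\mu-1/2}D(z)=\int_0^\infty e^{-z\sigma}h(\sigma)\,d\sigma$ with $h(\sigma)=\sigma^{\mu-1/2}\big[(2x+\sigma)^{\mu-1/2}-x^{\mu-1/2}(2+\sigma)^{\mu-1/2}\big]$, which vanishes like $\sigma^{\mu+1/2}$ at the origin (and identically when $x=1$).

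The main obstacle is to establish the representation $R(z)=\frac{\lambda}{z}\int_0^\infty\big(e^{-vz}-1\big)w_\lambda(v)\,dv$, which is what \emph{defines} $w_\lambda$. Since $D(z)$ equals $K_\mu(z)$ up to the explicit elementary factor $z^{-\mu}e^{z}$, one has $R(z)=c_\mu\,z^{\mu}e^{-z}K_\mu(z)^{-1}\int_0^\infty e^{-z\sigma}h(\sigma)\,d\sigma$ with an explicit $c_\mu$, so the task reduces to: (i) controlling the reciprocal $1/K_\mu$ through the same integral representation; and (ii) showing that $\Psi(z):=z\,R(z)/\lambda$ --- which is bounded, tends to $0$ as $z\to0^+$, and to a finite limit as $z\to\infty$ --- can be written as $\int_0^\infty(e^{-vz}-1)w_\lambda(v)\,dv$. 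Equivalently, $-\Psi'(z)$ must be completely monotone, so that $v\,w_\lambda(v)$ is its inverse Laplace transform; this is the point where one invokes the classical complete-monotonicity/Bernstein structure of ratios of $K_\mu$'s and carries out the Fubini interchanges carefully. One finds $w_\lambda\ge0$ for $\mu\ge1/2$ (with $w_\lambda\equiv0$ when $\mu=1/2$, consistently with $F(z)=e^{-\lambda z}$ there) and of variable sign for $0\le\mu<1/2$; the quantitative bounds on $w_\lambda$ needed afterwards for the sharp estimates are precisely the content of the Appendix.

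Finally I would invert, using only $\mathcal L^{-1}\big[e^{-a\sqrt{2s}}\big](t)=\frac{a\,e^{-a^2/2t}}{\sqrt{2\pi}\,t^{3/2}}$ and $\mathcal L^{-1}\big[e^{-a\sqrt{2s}}/\sqrt{2s}\big](t)=\frac{e^{-a^2/2t}}{\sqrt{2\pi t}}$. By the previous steps,
$$F(\sqrt{2s})=x^{\mu-1/2}e^{-\lambda\sqrt{2s}}+\frac{\lambda\,e^{-\lambda\sqrt{2s}}}{\sqrt{2s}}\int_0^\infty\big(e^{-v\sqrt{2s}}-1\big)w_\lambda(v)\,dv\/.$$
The first summand inverts to $\frac{\lambda e^{-\lambda^2/2t}}{\sqrt{2\pi t}}\cdot\frac{x^{\mu-1/2}}{t}$; inverting under the $v$-integral --- applying the second identity with $a=\lambda+v$ and with $a=\lambda$, and using $(\lambda+v)^2=\lambda^2+\kappa$ --- gives $\frac{\lambda e^{-\lambda^2/2t}}{\sqrt{2\pi t}}\int_0^\infty\big(e^{-\kappa/2t}-1\big)w_\lambda(v)\,dv$. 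Adding the two recovers exactly formula $\pref{rep1}$. The only further care needed is to justify interchanging $\mathcal L^{-1}$ with $\int_0^\infty dv$ and the integrability near $v=0$ of $(e^{-\kappa/2t}-1)w_\lambda(v)$, both immediate from $e^{-\kappa/2t}-1=O(v)$ there together with the bound on $w_\lambda$ from the previous step.
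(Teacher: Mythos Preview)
The paper does not prove this theorem; it is quoted from \cite{BR:2006}, and the text immediately following the statement records the explicit form of $w_\lambda=w_{1,\lambda}+w_{2,\lambda}$ obtained there. That explicit form --- a finite sum over the zeros $z_i$ of $K_\mu$ plus an integral whose denominator is $\cos^2(\pi\mu)K_\mu^2(u)+(\pi I_\mu(u)+\sin(\pi\mu)K_\mu(u))^2=|K_\mu(-u)|^2$ --- reveals that the construction in \cite{BR:2006} proceeds by complex inversion of the Laplace transform $x^\mu K_\mu(xz)/K_\mu(z)$: one deforms the Bromwich contour, collects residues at the zeros of $K_\mu$ (giving $w_{1,\lambda}$), and picks up the discontinuity across the branch cut of $K_\mu$ along the negative real axis (giving $w_{2,\lambda}$). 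Your steps 1, 2 and 4 match this route exactly, and your Laplace inversion at the end is clean and correct.

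The gap is step 3. You identify it as the main obstacle and then do not carry it out; the device you propose --- complete monotonicity / Bernstein --- is the wrong tool here. Bernstein's theorem produces a \emph{nonnegative} representing measure, but the $w_\lambda$ in the paper is a genuinely signed function: for $3/2<\mu<5/2$ one has $-\cos(\pi\mu)<0$, so by Lemma~\ref{w2:estimate:lemma} the component $w_{2,\lambda}$ is strictly negative while $w_{1,\lambda}$ is an oscillatory sum over two complex-conjugate zeros; there is no reason for their sum to be nonnegative, and in particular $-\Psi'$ need not be completely monotone. What actually yields $w_\lambda$ is the contour-integral computation sketched above, which simultaneously proves the representation and delivers the explicit formulas the paper relies on throughout the Appendix. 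Your outline would become a proof if you replaced the Bernstein appeal by that contour argument (or equivalently by an explicit inverse-Laplace computation of $z^{-1}\big(e^{\lambda z}x^\mu K_\mu(xz)/K_\mu(z)-x^{\mu-1/2}\big)$ via residues and branch-cut integration).
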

The function $w_\lambda$ appearing in the formulas is described in terms of the \textit{modified Bessel functions} $K_\mu$ and $I_\mu$. The function $K_\mu(z)$ extends  to an entire function when $\mu-1/2$ is an integer and has a holomorphic extension to $\C\setminus (-\infty,0]$ when $\mu-1/2$ is not an integer.
Denote the set of zeros of the function $K_\mu(z)$ by $Z=\{z_1,...,z_{k_\mu}\}$ (cf. \cite{Erdelyi:1954}, p. 62). Recall that $k_\mu=\mu-1/2$
when $\mu-1/2 \in \N$. For $\mu-1/2 \notin \N$, $k_\mu$ is the even number closest to $\mu-1/2$. The functions $K_\mu$ and $K_{\mu-1}$ have no common zeros.

The function $w_\lambda$ is defined as a sum of two functions
\begin{eqnarray}
  \label{wlambda:sum}
  w_\lambda(v)=w_{1,\/\lambda}(v)+w_{2,\/\lambda}(v)\/,
\end{eqnarray}  
where 
$$w_{1,\/\lambda}(v)= -\frac{x^\mu }{ \lambda} \sum_{i=1}^{k_\mu}
 \frac{z_i e^{\lambda z_i} K_\mu(xz_i) }{ K_{\mu-1}(z_i)} \, e^{z_i v}$$
and
\begin{eqnarray*}
 w_{2,\/\lambda}(v)= -\cos(\pi\mu) \frac{x^\mu }{ \lambda}
 \int_0^\infty \frac{
     I_\mu\(xu\)K_\mu(u)-I_\mu(u)K_\mu\(xu\)}{
\cos^2(\pi\mu) K_\mu^2(u)+(\pi I_\mu(u)+\sin(\pi \mu) K_\mu(u))^2} \,
 e^{-\lambda u} e^{-vu} \/u du\/.
\end{eqnarray*}
{Moreover the moments of $\kappa$ with respect to $w_\lambda(v)dv$ can be computed in the following way}
   \begin{eqnarray}\label{laplace01}
   x^{\mu-1/2} (\mu^2-1/4)/2x=\int_0^\infty w_\lambda(v) dv\/,
   \end{eqnarray}
   and, for $\mu>1/2$, we have
 \begin{eqnarray}\label{laplace02}
  2 x^{\mu-1/2} =\int_0^\infty \kappa w_\lambda(v) dv\/.
   \end{eqnarray}
   
We use also the following representation of $\qmu$ for $\mu\geq 1/2$ (see \cite{BR:2006} (24))   
\begin{eqnarray}
  \label{rep3}
  \qmu = \lambda\,\frac{e^{-\lambda^2/2t}}{\sqrt{2\pi t}}\int_0^\infty \left(e^{-\kappa/2t}-\sum_{0\leq j \leq l}(-1)^j\frac{1}{j!}\left(\frac{\kappa}{2t}\right)^j\right)w_\lambda(v)dv\/,
\end{eqnarray}
where $l=[\mu+1/2]$ if $\mu \notin \N$, and $l=\mu-1/2$ otherwise.


\section{Uniform estimates of hitting time density function}
Throughout of the rest of the paper we denote $\lambda=x-1$. The main result of the paper is the following uniform estimate for a density function of $T_1^{(\mu)}$.
\begin{thm}
  For every $x>1$ and $t>0$ we have
  \begin{eqnarray}
     \label{Bessel:hittingtime:estimates}
     q^{(\mu)}_x(t) \approx \lambda \left(\frac{1}{1+x^{2\mu}}\right)\frac{ {e^{-\lambda^2/2t}}}{t^{3/2}} \frac{ x^{2|\mu|-1} }{t^{|\mu|-1/2}+ x^{|\mu|-1/2}}\/,\quad \mu\neq 0\/.
  \end{eqnarray}
  Moreover, we have
    $$
    q^{(0)}_x(t) \approx
    \left\{\begin{array}{lc} \lambda \dfrac{e^{-\lambda^2/2t} }{x t }
  \dfrac {1+\log x}{(1+\log \frac tx)(1+\log t)}\/,& t>2x\/,\\
  {\lambda} \dfrac{e^{-\lambda^2/2t} }{x^{1/2} t^{3/2} }\/, & t\le 2x
  \end{array}\right.
  $$
  or equivalently
  $$
    q^{(0)}_x(t) \approx
  \lambda e^{-\lambda^2/2t} \frac{(x+t)^{1/2}}{x t^{3/2}} 
  \frac {1+\log x}{(1+\log (1+\frac tx))(1+\log (t+x))}\/.
  $$
\end{thm}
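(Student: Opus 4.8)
The plan is to work directly from the integral representation \pref{rep1} for $\qmu$ with $\mu\geq 0$, and then transfer to negative indices via the absolute continuity relation \pref{BesselTime:indices}. The factor $\lambda e^{-\lambda^2/2t}/\sqrt{2\pi t}$ is already explicit, so everything reduces to showing that the bracketed quantity
\[
  B_x(t) := \frac{x^{\mu-1/2}}{t} + \int_0^\infty\bigl(e^{-\kappa/2t}-1\bigr)\,w_\lambda(v)\,dv
\]
is comparable to $x^{\mu-1/2}\bigl(t+x^{\mu-1/2}\cdot\text{(stuff)}\bigr)^{-1}$ after collecting the target estimate; more precisely I expect to prove, for $\mu>0$,
\[
  B_x(t) \approx \frac{1}{t}\cdot\frac{x^{2\mu-1}}{t^{\mu-1/2}+x^{\mu-1/2}}\cdot\frac{1}{?}
\]
up to bookkeeping of the $1/(1+x^{2\mu})$ factor that appears only after one also invokes \pref{BesselTime:indices} and symmetrizes $\mu\leftrightarrow-\mu$. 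The key analytic input is sharp two-sided control of the kernel $w_\lambda(v)$ itself: from its definition as $w_{1,\lambda}+w_{2,\lambda}$, the $w_1$ part is a finite exponential sum governed by the (finitely many) zeros $z_i$ of $K_\mu$ and is present only when $\mu>1/2$, while $w_2$ is an explicit oscillatory integral in $u$ whose integrand, by the asymptotics \pref{I_atzero}--\pref{asymp_K_infty} of $I_\mu,K_\mu$ at $0$ and $\infty$, behaves like a power of $u$ near $0$ and decays exponentially at $\infty$. This is exactly the content I would isolate in the Appendix (as the paper announces): pointwise bounds for $w_\lambda(v)$ in the three regimes $v$ small, $v\asymp 1$, $v$ large, with the dependence on $x$ made explicit.

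The main steps, in order: (i) establish the pointwise estimates of $w_\lambda(v)$ just described, distinguishing $0<\mu<1/2$, $\mu=1/2$ (where the $1/2$-stable answer is known and serves as a sanity check), and $\mu>1/2$; (ii) split the $v$-integral at $v\asymp\lambda$ (equivalently $\kappa\asymp\lambda^2$) and further according to whether $2t$ is large or small compared to the relevant scale, using $|e^{-\kappa/2t}-1|\leq \min(1,\kappa/2t)$ to convert the integral into elementary integrals of powers of $v$ against the $w_\lambda$ bounds; (iii) compare the resulting main term with the leading term $x^{\mu-1/2}/t$, showing that for small $t$ the $x^{\mu-1/2}/t$ term dominates and for large $t$ there is a genuine cancellation forcing $B_x(t)$ down to the smaller order dictated by \pref{laplace01}, which identifies $\int_0^\infty w_\lambda\,dv$ exactly and thus pins the rate of cancellation; (iv) for $\mu\geq 1/2$ use the refined representation \pref{rep3} (subtracting the Taylor polynomial of $e^{-\kappa/2t}$ up to order $l$) together with the moment identity \pref{laplace02} to get the correct large-$t$ behaviour, since the crude $e^{-\kappa/2t}-1$ bound is too lossy there; (v) assemble the two-sided bound for $\mu>0$, then apply \pref{BesselTime:indices} to get $\mu<0$, and finally merge the two into the single symmetric formula \pref{Bessel:hittingtime:estimates} with $|\mu|$ and the $1/(1+x^{2\mu})$ prefactor.

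For $\mu=0$ the same scheme applies but the Bessel asymptotics degenerate: $K_0$ has the logarithmic behaviour \pref{K0_atzero} at the origin, which is precisely the source of the logarithmic corrections $(1+\log x)/[(1+\log(1+t/x))(1+\log(t+x))]$ in the stated estimate. Here I would track the $u\to 0$ end of the $w_2$-integral carefully — the denominator $\cos^2(\pi\mu)K_\mu^2+(\pi I_\mu+\sin(\pi\mu)K_\mu)^2$ at $\mu=0$ becomes $K_0(u)^2+\pi^2I_0(u)^2$, which near $u=0$ is $\asymp(\log(1/u))^2$ — and carry the logarithms through the splitting in step (ii). The two equivalent forms of the $\mu=0$ estimate (the piecewise one and the $(x+t)^{1/2}$ one) are elementarily interchangeable once one checks the ranges $t>2x$ and $t\leq 2x$ separately, so I would prove the piecewise version and remark on the equivalence.

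The hard part will be step (iii)/(iv): obtaining the \emph{upper} bound in the large-$t$ regime, because there the leading term $x^{\mu-1/2}/t$ and the integral nearly cancel, so one cannot simply bound each piece in absolute value — one must exploit the exact value of $\int_0^\infty w_\lambda\,dv$ from \pref{laplace01} (and of $\int\kappa w_\lambda\,dv$ from \pref{laplace02} when $\mu>1/2$) and estimate the \emph{remainder} $\int_0^\infty(e^{-\kappa/2t}-1+\kappa/2t)\,w_\lambda(v)\,dv$, whose integrand is now $O((\kappa/2t)^2)$, against the tail behaviour of $w_\lambda$. Controlling this remainder uniformly in both $t$ and $x$ — in particular making sure the $x$-dependence of the $w_\lambda$-tail produces exactly the claimed $x^{2\mu-1}/(t^{\mu-1/2}+x^{\mu-1/2})$ shape rather than something off by a power of $x$ — is the delicate computation, and it is essentially what the Appendix lemmas (referred to in the introduction as Lemma \ref{qt:estimate:zero}) are built to supply.
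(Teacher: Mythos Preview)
Your outline matches the paper's backbone --- pointwise control of $w_\lambda$, the representation \pref{rep1} for the small-$t$ regime, and the Taylor-subtracted form \pref{rep3} for the large-$t$ regime --- but it is missing the mechanism that produces the \emph{lower} bound when $t\gtrsim x$, which is the genuinely delicate part.

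Your step (iv) proposes to use \pref{laplace02} together with \pref{rep3}, but \pref{laplace02} is only the first $\kappa$-moment; for $\mu>3/2$ you need the $(l+1)$-th moment with $l=[\mu+1/2]$, and more importantly you need its \emph{sign and size}. When $\mu-1/2\in\N$ one has $\cos(\pi\mu)=0$, so $w_\lambda=w_{1,\lambda}$ is a finite sum of complex exponentials and the pointwise bound $|w_{1,\lambda}(v)|\le c\,x^{\mu-3/2}e^{-\theta_\mu v}$ gives you an upper bound on the remainder but no lower bound; the paper resolves this by invoking the Getoor--Sharpe asymptotics for $\textbf{P}_x^{(-\mu)}(T_0^{(-\mu)}>t)$ to identify the constant $C_{l+1}(x)=\frac{\sqrt{2\pi}(x^{2\mu}-1)}{\lambda\Gamma(\mu)2^\mu}>0$ explicitly. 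When $\mu-1/2\notin\N$ the paper instead splits $w_\lambda=w_{1,\lambda}+w_{2,\lambda}$ and uses that $w_{2,\lambda}$ has the definite sign of $-\cos(\pi\mu)$ (Appendix), so that the $w_{2,\lambda}$-contribution to \pref{rep3} can be bounded below directly, while the $w_{1,\lambda}$-contribution is shown to be of smaller order $(x/t)^{l-\mu+1/2}$. Your ``estimate the remainder $\int(e^{-\kappa/2t}-1+\kappa/2t)w_\lambda\,dv$ in absolute value'' would only yield upper bounds in both cases.

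There is a second ingredient you do not mention: the transition region $t\asymp x$. Here both the error in the small-$t$ expansion (Lemma~\ref{qt:estimate:zero}) and the defect $(x/t)^{l-\mu+1/2}$ in the large-$t$ lower bound are $O(1)$, so neither side closes. The paper handles this separately by computing the exact limit
\[
\lim_{x/t\to c,\ x\to\infty}\frac{1}{x^{\mu-1/2}}\frac{\qmu\sqrt{2\pi t}}{e^{-\lambda^2/2t}}=\sqrt{\frac{\pi c}{2}}\,\frac{e^{-c}}{K_\mu(c)}>0
\]
via the Laplace transform of $w_\lambda$, and then uses compactness on $c\in[C',C]$ together with the absolute-continuity comparison \pref{Bessel:AC} against $\mu=1/2$ (for bounded $x$) to obtain two-sided bounds throughout $t\asymp x$. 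This is not reachable by moment-subtraction and tail bounds alone, and you should plan for it explicitly.
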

As corollary, putting $\mu=n/2-1$, we get the corresponding result for $n$-dimensional Brownian motion. 
\begin{thm}
Let $\sigma^{(n)}$ be the first hitting time of a unit ball by $n$-dimensional Brownian motion $W^{(n)}=\{W_t^{(n)},t\geq 0\}$, i.e.
\begin{eqnarray*}
   \sigma^{(n)} = \inf\{t>0; |W^{(n)}_t|=1\}\/.
\end{eqnarray*}
Then, for $W^{(n)}_0 = x\in\R^n$ such that $|x|>1$ we have
\begin{eqnarray*}
   \frac{P^{x}(\sigma^{(n)}\in dt)}{dt} \approx \frac{|x|-1}{|x|}\frac{ {e^{-(|x|-1)^2/2t}}}{t^{3/2}} \frac{ 1}{t^{(n-3)/2}+ |x|^{(n-3)/2}}\/, \quad n>2,
\end{eqnarray*}
for every $t>0$. Moreover, we have
\begin{eqnarray*}
  \frac{P^{x}(\sigma^{(2)}\in dt)}{dt} \approx  \frac{|x|-1}{|x|} e^{-(|x|-1)^2/2t} \frac{(|x|+t)^{1/2}}{ t^{3/2}} 
  \frac {1+\log |x|}{(1+\log (1+\frac t{|x|}))(1+\log (t+|x|))}
\end{eqnarray*}
\end{thm}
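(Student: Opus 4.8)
The plan is to deduce this directly from the previous theorem, using the classical identification of the radial part of Brownian motion with a Bessel process. First I would record that, by rotational invariance, the law of $\sigma^{(n)}$ under $P^{x}$ depends on $x$ only through $r:=|x|$, and that $\{|W^{(n)}_t|,\,t\geq 0\}$ is a Bessel process of dimension $n$, i.e.\ of index $\mu=n/2-1$, started from $r$. Hence $\sigma^{(n)}$ has, under $P^{x}$, the same distribution as $T_1^{(\mu)}$ under $\textbf{P}_r^{(\mu)}$ with $\mu=n/2-1$, so that
\[
  \frac{P^{x}(\sigma^{(n)}\in dt)}{dt}=q_r^{(\mu)}(t),\qquad \mu=\tfrac n2-1,\quad r=|x|.
\]
Everything then reduces to substituting $\mu=n/2-1$ into the estimates of the previous theorem and simplifying.

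For $n=2$ this is instantaneous: here $\mu=0$, and the $(\mu=0)$-estimate of the previous theorem, in its equivalent one-line form, with $x$ replaced by $r=|x|$ and $\lambda=r-1$, is verbatim the claimed bound for $P^{x}(\sigma^{(2)}\in dt)/dt$. For $n>2$ we have $\mu=n/2-1\geq 1/2>0$, so $|\mu|=\mu$, and $2\mu=n-2$, $2|\mu|-1=n-3$, $|\mu|-1/2=(n-3)/2$. Plugging this into \pref{Bessel:hittingtime:estimates}, with $x=r$ and $\lambda=r-1$, gives
\[
  q_r^{(\mu)}(t)\approx (r-1)\,\frac{1}{1+r^{n-2}}\,\frac{e^{-(r-1)^2/2t}}{t^{3/2}}\,\frac{r^{n-3}}{t^{(n-3)/2}+r^{(n-3)/2}}.
\]
The only thing left is the elementary observation that $\dfrac{r^{n-3}}{1+r^{n-2}}\approx \dfrac1r$ uniformly for $r>1$, with constant depending only on $n$: since $r>1$ and $n-2\geq 1$ we have $r^{n-2}>1$, hence $1+r^{n-2}\approx r^{n-2}$, and therefore $r^{n-3}/(1+r^{n-2})\approx r^{\,n-3-(n-2)}=r^{-1}$. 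Substituting this back produces exactly $\dfrac{|x|-1}{|x|}\dfrac{e^{-(|x|-1)^2/2t}}{t^{3/2}}\dfrac{1}{t^{(n-3)/2}+|x|^{(n-3)/2}}$, as asserted.

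There is essentially no obstacle here: the statement is a corollary of the main theorem, and its entire content beyond invoking that theorem is the identification $\sigma^{(n)}\stackrel{d}{=}T_1^{(n/2-1)}$ together with the one-line algebraic simplification above. The only point deserving a word of care is uniformity, but the comparability in the previous theorem is already uniform in $(t,x)$, and the step $1+r^{n-2}\approx r^{n-2}$ holds for all $r>1$ with an $n$-dependent absolute constant, so no uniformity is lost.
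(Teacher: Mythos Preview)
Your proposal is correct and matches the paper's approach exactly: the paper states this theorem as a corollary of the preceding one, obtained by putting $\mu=n/2-1$ and using that $|W^{(n)}_t|$ is a Bessel process of that index. The only additional content you supply is the elementary simplification $r^{n-3}/(1+r^{n-2})\approx 1/r$ for $r>1$, which the paper leaves implicit.
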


The proof of the main  theorem follows from Lemmas \ref{qt:estimate:inside}, \ref{qt:estimate:infty1}, \ref{qt:estimate:infty2}, \ref{qt:zero:estimate:infty}   given below. We use the crucial estimates of the function $w_\lambda(v)$ and its components $w_{1,\,\lambda}(v)$ and $w_{2,\,\lambda}(v)$ given in Appendix (see Lemmas \ref{w1:estimate:lemma}, \ref{w2:estimate:lemma} and \ref{w:muzero:estimate:lemma}).

  
The following lemma provides satisfactory estimates of $\qmu$ in the case when $t$ is small relative to $x$. 
{\begin{lem}  \label{qt:estimate:zero}
  We have the following expansion
  \begin{eqnarray*}
   \qmu=  \lambda\frac{e^{-\lambda^2/4t}}{(2\pi)^{1/2}t^{3/2}}x^{\mu-1/2} \left(1+\frac{1-4\mu^2}8\frac t{x} + E(t,x)\right),
    \end{eqnarray*}
  where the error term satisfies the following estimate
$$  |E(t,x)|\le  C \frac tx (\sqrt{t}\wedge \frac {t}\lambda).$$
   Moreover, for $0\le \mu<1/2$  we have
  \begin{eqnarray*}
  \lambda\frac{e^{-\lambda^2/2t}}{(2\pi)^{1/2}t^{3/2}}x^{\mu-1/2}\le   \qmu\le  \lambda\frac{e^{-\lambda^2/4t}}{(2\pi)^{1/2}t^{3/2}}x^{\mu-1/2} \left(1+\frac{1-4\mu^2}8\frac t{x} \right)
  \end{eqnarray*}
 for every $x>1, t>0$.
\end{lem}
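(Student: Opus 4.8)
The plan is to read the expansion directly off \eqref{rep1} and the first moment identity \eqref{laplace01}, then to bound the remainder by combining the Gaussian decay of $e^{-\kappa/2t}$ with the pointwise estimate of $w_\lambda$ from the Appendix, and finally to deduce the two-sided bound for $0\le\mu<1/2$ from the same representation using only the \emph{sign} of $w_\lambda$. For the identity, write $\kappa=\kappa(v)=v(2\lambda+v)$, so that $\lambda^2+\kappa=(\lambda+v)^2$. In \eqref{rep1} split $\int_0^\infty(e^{-\kappa/2t}-1)w_\lambda(v)\,dv=\int_0^\infty e^{-\kappa/2t}w_\lambda(v)\,dv-\int_0^\infty w_\lambda(v)\,dv$ and evaluate the last integral by \eqref{laplace01}; since $-(\mu^2-1/4)/2=(1-4\mu^2)/8$, pulling out the prefactor $\lambda e^{-\lambda^2/2t}x^{\mu-1/2}/(\sqrt{2\pi}\,t^{3/2})$ gives exactly the claimed expansion with
\[
 E(t,x)=\frac{t}{x^{\mu-1/2}}\int_0^\infty e^{-\kappa(v)/2t}\,w_\lambda(v)\,dv .
\]

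\textbf{Estimating the remainder.} Since $\kappa(v)=v^2+2\lambda v\ge\max\{v^2,2\lambda v\}$, one has $e^{-\kappa(v)/2t}\le e^{-v^2/2t}\wedge e^{-\lambda v/t}$ and therefore $\int_0^\infty e^{-\kappa(v)/2t}\,dv\le C(\sqrt t\wedge\frac t\lambda)$. The Appendix estimates of $w_\lambda$ --- Lemmas \ref{w1:estimate:lemma} and \ref{w2:estimate:lemma} for $\mu\ne0$, Lemma \ref{w:muzero:estimate:lemma} for $\mu=0$ --- provide, uniformly in $x>1$, the bound $\sup_{v>0}|w_\lambda(v)|\le Cx^{\mu-3/2}$. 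Hence
\[
 |E(t,x)|\le\frac{t}{x^{\mu-1/2}}\Bigl(\sup_{v>0}|w_\lambda(v)|\Bigr)\int_0^\infty e^{-\kappa(v)/2t}\,dv\le C\,\frac{t}{x^{\mu-1/2}}\,x^{\mu-3/2}\Bigl(\sqrt t\wedge\tfrac t\lambda\Bigr)=C\,\frac tx\Bigl(\sqrt t\wedge\tfrac t\lambda\Bigr),
\]
which is the asserted estimate.

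\textbf{The two-sided bound for $0\le\mu<1/2$.} Here only the sign of $w_\lambda$ is needed. For such $\mu$ we have $k_\mu=0$ (the even integer nearest $\mu-1/2\in[-1/2,0)$), so $K_\mu$ has no zeros, $w_{1,\lambda}\equiv0$, and $w_\lambda=w_{2,\lambda}$. In the integrand defining $w_{2,\lambda}$ the factor $\cos(\pi\mu)$ and the denominator are positive, while the numerator $I_\mu(xu)K_\mu(u)-I_\mu(u)K_\mu(xu)$ is positive for $x>1$ because $r\mapsto I_\mu(r)/K_\mu(r)$ is strictly increasing on $(0,\infty)$ ($I_\mu$ increasing, $K_\mu$ decreasing, both positive); hence $w_\lambda(v)<0$ for all $v>0$. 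Returning to \eqref{rep1}: $(e^{-\kappa/2t}-1)w_\lambda\ge0$, so $\int_0^\infty(e^{-\kappa/2t}-1)w_\lambda\,dv\ge0$ and $\qmu\ge\lambda e^{-\lambda^2/2t}x^{\mu-1/2}/(\sqrt{2\pi}\,t^{3/2})$, the lower bound. For the upper bound, $\int_0^\infty e^{-\kappa/2t}w_\lambda\,dv\le0$, i.e.\ $E(t,x)\le0$, so by the identity of the first step $\qmu\le\lambda e^{-\lambda^2/2t}x^{\mu-1/2}(1+\frac{1-4\mu^2}{8}\frac tx)/(\sqrt{2\pi}\,t^{3/2})$. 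Both inequalities hold for every $x>1$ and $t>0$.

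\textbf{Main obstacle.} The identity and the sign argument are short; the substantive point is the uniform bound $\sup_{v>0}|w_\lambda(v)|\le Cx^{\mu-3/2}$ used above, which is precisely the content of the Appendix. Establishing it requires a careful asymptotic analysis of the Bessel-function integral defining $w_{2,\lambda}$, uniform in $x>1$: the behaviour as $x\to1$ (where the numerator $I_\mu(xu)K_\mu(u)-I_\mu(u)K_\mu(xu)$ vanishes to first order, as one reads off from the Wronskian \eqref{Wronskian}) is quite different from that for large $x$; for $\mu\ge1/2$ the residue sum $w_{1,\lambda}$ has to be controlled using that the zeros of $K_\mu$ lie in the left half-plane, so the factors $e^{\lambda z_i}$ decay; and $\mu=0$ needs separate treatment owing to the logarithmic rather than power-type singularity of $K_0$ at the origin (cf.\ \eqref{K0_atzero}).
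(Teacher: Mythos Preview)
Your proof is correct and follows essentially the same route as the paper: both derive the expansion by inserting \eqref{laplace01} into \eqref{rep1}, identify $E(t,x)=\frac{t}{x^{\mu-1/2}}\int_0^\infty e^{-\kappa/2t}w_\lambda(v)\,dv$, bound it via $\sup_v|w_\lambda(v)|\le Cx^{\mu-3/2}$ (from the Appendix lemmas) together with $\int_0^\infty e^{-\kappa/2t}\,dv\le C(\sqrt t\wedge t/\lambda)$, and obtain the two-sided bound for $0\le\mu<1/2$ from the sign of $w_\lambda$. The only (minor) addition is that you justify $w_\lambda\le0$ explicitly via $k_\mu=0$ and the monotonicity of $I_\mu/K_\mu$, whereas the paper simply uses this sign without comment.
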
 }
\begin{proof}
  { By the basic formula (\ref{rep1}), with application of (\ref{laplace01}), we obtain
   \begin{eqnarray*}
     \frac{(2\pi)^{1/2}t^{3/2}}{\lambda x^{\mu-1/2}}\,e^{\lambda^2/2t}\qmu &=& 1+\frac{t}{x^{\mu-1/2}}\int_0^\infty (e^{-\kappa/2t}-1)w_\lambda(v)dv\\
   & =& 1+\frac{1-4\mu^2}8\frac t{x} + \frac{t}{x^{\mu-1/2}}\int_0^\infty e^{-\kappa/2t}w_\lambda(v)dv\\
      & =& 1+\frac{1-4\mu^2}8\frac t{x} + E(x,t)
     \/.
   \end{eqnarray*} }

 {  Observe that, by Lemmas \ref{w1:estimate:lemma},  \ref{w2:estimate:lemma}
 and \ref{w:muzero:estimate:lemma},  $|w_\lambda(v)|\le C x^{\mu-3/2}$, which gives the following estimate 
   \begin{eqnarray*}
     |E(x,t)|= \left|\frac{t}{x^{\mu-1/2}}\int_0^\infty e^{-(\lambda v) /t}e^{- v^2 /2t}w_\lambda(v)dv\right|\leq
     C \frac tx\int_0^\infty e^{-(\lambda v) /t}e^{- v^2 /2t}dv\le C \frac tx (\sqrt{t}\wedge \frac {t}\lambda)
       \/.
   \end{eqnarray*} }
 This ends the proof of the first claim. For $\mu<1/2$ observe that 
   \begin{eqnarray*}
    0\le  \int_0^\infty (e^{-\kappa/2t}-1)w_\lambda(v)dv\le  -\int_0^\infty w_\lambda(v)dv = x^{\mu-1/2} (1/4-\mu^2)/2x.
   \end{eqnarray*} 
   Consequently,  we get
   \begin{eqnarray*}
     1\leq\frac{(2\pi)^{1/2}t^{3/2}x^{1/2}}{\lambda}\,e^{\lambda^2/4t}\qmu\leq\left(1+\frac{1-4\mu^2}8\frac t{x} \right)
   \end{eqnarray*}
   and this completes the proof of the second claim in the case $\mu<1/2$.
\end{proof}
{\begin{rem}\label{remark1} If $ 1<x<2$ then the absolute value of the error term is bounded by    $Ct^{3/2}$, while for $x>2$ it is bounded by $C(\frac tx)^2$. Observe also that the density $\qmu$ up to a multiplicative constant is close to the density of the hitting distribution of $1$ by the one-dimensional  Brownian motion  starting from $x$, when the fraction $\frac tx$ is small with the error precisely estimated by the above lemma. With some additional effort one can show that the error term estimate can not be improved. 
\end{rem}}
{
\begin{prop}
\label{Prop:asymp:inside}
For every $\mu\neq 0$ and $c>0$ we have
\begin{eqnarray*}
   \lim_{x/t\to c,\,x\to \infty} \dfrac{1+x^{2\mu}}{x^{|\mu|-1/2}}\frac{q_x^{(\mu)}(t)\sqrt{2\pi t}}{ e^{-\lambda^2/2t}} &=& \sqrt{\frac{\pi c}{2}} \frac{e^{-c}}{K_{|\mu|}(c)}\/.
\end{eqnarray*}
\end{prop}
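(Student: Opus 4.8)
The plan is to convert the representation $(\ref{rep1})$ into an \emph{exact} closed form by a Laplace--transform computation, and then to read off the limit from the asymptotics $(\ref{asymp_K_infty})$ of $K_\mu$ at infinity together with a short dominated--convergence estimate. First I would reduce to $\mu>0$. By $(\ref{BesselTime:indices})$, for $\mu>0$ one has
\begin{equation*}
\frac{1+x^{2\mu}}{x^{|\mu|-1/2}}\,q_x^{(\mu)}(t)=(1+x^{2\mu})\,x^{-2\mu}\,x^{1/2-\mu}\,\qmu=(x^{-2\mu}+1)\,x^{1/2-\mu}\,\qmu,
\end{equation*}
while for $\mu<0$ the same quantity equals $(1+x^{2\mu})\,x^{1/2+\mu}\,q_x^{(\mu)}(t)$, which has the same shape with $|\mu|$ in place of $\mu$ (recall $q_x^{(\mu)}(t)=\qmu$ read with parameter $|\mu|$). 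Since $1+x^{\mp2|\mu|}\to1$ as $x\to\infty$, it suffices to prove that for every $\mu>0$,
\begin{equation*}
\lim_{x/t\to c,\ x\to\infty}\,x^{1/2-\mu}\,\frac{\qmu\,\sqrt{2\pi t}}{e^{-\lambda^2/2t}}=\sqrt{\frac{\pi c}{2}}\,\frac{e^{-c}}{K_\mu(c)}.
\end{equation*}

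Next I would invoke the classical Laplace transform $\int_0^\infty e^{-\theta^2 t/2}\,\qmu\,dt=x^{\mu}K_\mu(\theta x)/K_\mu(\theta)$ for $\theta>0$ --- this is exactly the transform inverted in \cite{BR:2006}. Transforming the right--hand side of $(\ref{rep1})$ term by term, using the elementary formulas $\int_0^\infty(2\pi t)^{-1/2}e^{-a^2/2t-\theta^2 t/2}\,dt=\theta^{-1}e^{-a\theta}$ and $\int_0^\infty(2\pi t)^{-1/2}t^{-1}e^{-a^2/2t-\theta^2 t/2}\,dt=a^{-1}e^{-a\theta}$, the identity $e^{-\lambda^2/2t}e^{-\kappa/2t}=e^{-(\lambda+v)^2/2t}$, and Fubini (legitimate since $w_\lambda\in L^1$ by the Appendix), and equating the outcome with $x^{\mu}K_\mu(\theta x)/K_\mu(\theta)$, one obtains
\begin{equation*}
\int_0^\infty\bigl(e^{-v\theta}-1\bigr)\,w_\lambda(v)\,dv=\frac{\theta}{\lambda}\left(\frac{x^{\mu}K_\mu(\theta x)\,e^{\lambda\theta}}{K_\mu(\theta)}-x^{\mu-1/2}\right),\qquad\theta>0.
\end{equation*}
Now set $\eta:=\lambda/t$ and write $\kappa/2t=\eta v+v^2/2t$; splitting $e^{-\kappa/2t}-1=(e^{-\eta v}-1)+e^{-\eta v}(e^{-v^2/2t}-1)$ inside $(\ref{rep1})$ and applying the displayed identity with $\theta=\eta$, the term $-(\eta/\lambda)x^{\mu-1/2}=-x^{\mu-1/2}/t$ it produces cancels the explicit $x^{\mu-1/2}/t$ of $(\ref{rep1})$, and (using $\lambda-x=-1$) one is left with the exact identity
\begin{equation*}
x^{1/2-\mu}\,\frac{\qmu\,\sqrt{2\pi t}}{e^{-\lambda^2/2t}}=\frac{\lambda}{t}\cdot\frac{x^{1/2}K_\mu(\lambda x/t)\,e^{\lambda^2/t}}{K_\mu(\lambda/t)}+x^{1/2-\mu}\lambda R,\qquad R:=\int_0^\infty e^{-\eta v}\bigl(e^{-v^2/2t}-1\bigr)w_\lambda(v)\,dv.
\end{equation*}

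It then remains to let $x\to\infty$ with $x/t\to c$. Since $\lambda/t\to c$ and $\lambda x/t\to\infty$, the asymptotics $(\ref{asymp_K_infty})$ applied to $K_\mu(\lambda x/t)$, the relation $-\lambda x/t+\lambda^2/t=-\lambda/t$, and the continuity and positivity of $K_\mu$ on $(0,\infty)$ give
\begin{equation*}
\frac{\lambda}{t}\cdot\frac{x^{1/2}K_\mu(\lambda x/t)\,e^{\lambda^2/t}}{K_\mu(\lambda/t)}=\sqrt{\frac{\pi\lambda}{2t}}\;\frac{e^{-\lambda/t}}{K_\mu(\lambda/t)}\,\bigl(1+o(1)\bigr)\ \longrightarrow\ \sqrt{\frac{\pi c}{2}}\,\frac{e^{-c}}{K_\mu(c)}.
\end{equation*}
For the remainder I would use only the crude bound $|w_\lambda(v)|\le Cx^{\mu-3/2}$ recorded in the proof of Lemma~\ref{qt:estimate:zero}, so that $x^{1/2-\mu}\lambda\,|w_\lambda(v)|\le C$; combined with $\eta\ge c/2$ for large $x$ and $|e^{-v^2/2t}-1|\le 1\wedge\frac{v^2}{2t}$, this gives $|x^{1/2-\mu}\lambda R|\le C\int_0^\infty e^{-cv/2}\bigl(1\wedge\frac{v^2}{2t}\bigr)\,dv\to0$ by dominated convergence. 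Substituting the two limits into the exact identity yields the reduced statement, hence the Proposition. The one genuinely delicate point is the justification of the term--by--term Laplace transform and of Fubini that produce the identity for $\int(e^{-v\theta}-1)w_\lambda$; after that the argument is purely bookkeeping of exponents together with $(\ref{asymp_K_infty})$ and an elementary estimate, and it requires no information on $w_\lambda$ beyond $\|w_\lambda\|_\infty\le Cx^{\mu-3/2}$ and $w_\lambda\in L^1$.
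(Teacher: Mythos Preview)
Your argument is correct, and it takes a genuinely different---and in fact shorter---route than the paper's proof. Both proofs rest on the same Laplace transform identity for $w_\lambda$ (the paper cites it as Lemma~3.1 of \cite{BR:2006}, while you re-derive it by transforming \eqref{rep1}), but the way it is used differs. The paper introduces the limit object $w_\infty(v)=\lim_{x\to\infty}\frac{\lambda}{x^{\mu-1/2}}w_\lambda(v)$, identifies its Laplace transform by passing to the limit in the identity, and then invokes dominated convergence with the decay estimate $\frac{\lambda}{x^{\mu-1/2}}|w_\lambda(v)|\le c\,e^{-v\theta_\mu}+c(v+1)^{-\mu-3/2}$ to show $\frac{\lambda}{x^{\mu-1/2}}\int e^{-\kappa/2t}w_\lambda\to\int e^{-cv}w_\infty$. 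Your device of writing $\kappa/2t=\eta v+v^2/2t$ with $\eta=\lambda/t$ and splitting $e^{-\kappa/2t}-1=(e^{-\eta v}-1)+e^{-\eta v}(e^{-v^2/2t}-1)$ lets you plug $\theta=\eta$ directly into the transform identity and obtain an \emph{exact} closed form for $x^{1/2-\mu}\frac{\qmu\sqrt{2\pi t}}{e^{-\lambda^2/2t}}$ in terms of $K_\mu$; the limit then follows from \eqref{asymp_K_infty} and an elementary $O(1/t)$ bound on the remainder. The payoff is that you never need to introduce $w_\infty$ or any decay-in-$v$ estimate on $w_\lambda$: the crude bound $|w_\lambda|\le Cx^{\mu-3/2}$ together with $w_\lambda\in L^1$ suffices. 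The paper's approach, on the other hand, makes visible the structural limit $w_\infty$ and its Laplace transform \eqref{winfty:laplace}, which may be of independent interest.
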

\begin{proof}
   It is enough to show the above-given convergence for strictly negative indices. The general statement follows from (\ref{BesselTime:indices}). Now we assume that $\mu>0$ and consider $\qmu$. We define 
\begin{eqnarray*}
    w_\infty (v) = -\sum_{i=1}^{k_\mu} \frac{\sqrt{z_i}e^{-z_i}}{K_{\mu-1}(z_i)}e^{-vz_i}-\frac{\cos(\pi\mu)}{\sqrt{2\pi}}\int_0^\infty
    \frac{e^{u}K_\mu(u)e^{-vu}\sqrt{u}du}{\cos^2(\pi\mu) K_\mu^2(u)+(\pi I_\mu(u)+\sin(\pi \mu) K_\mu(u))^2} 
\end{eqnarray*}
for every $v>0$. If $k_\mu=0$ then the first sum is equal to zero. Using the asymptotic expansion (\ref{asymp_K_infty}) we easily see that
\begin{eqnarray*}
   \lim_{x\to\infty} \frac{\lambda}{x^{\mu-1/2}}w_{1,\,\lambda}(v) = -\sum_{i=1}^{k_\mu} \frac{\sqrt{z_i}e^{-z_i}}{K_{\mu-1}(z_i)}\,e^{-vz_i}\/.
\end{eqnarray*}
The relation (\ref{asymp_I_infty}) implies that $|I_\mu(u)|\leq c_2 \dfrac{e^u}{\sqrt{u}}$ and consequently
\begin{eqnarray*}
   \sqrt{x}|I\mu(xu)K_\mu(u)-I_\mu(u)K_\mu(xu)|e^{-xu}\leq  c_2 K_\mu(u) \frac{1}{\sqrt{u}}
\end{eqnarray*}
for every $u>0$. Moreover, using the estimates of $K_\vartheta$ and $I_\vartheta$ given in Preliminaries, we observe that the function
\begin{eqnarray*}
   f(u,v) = \frac{e^{u}K_\mu(u)e^{-vu}\sqrt{u}du}{\cos^2(\pi\mu) K_\mu^2(u)+(\pi I_\mu(u)+\sin(\pi \mu) K_\mu(u))^2}
\end{eqnarray*}
is bounded, as a function of $u$, by $c_3 e^{-(v+2)u}u^{3/2}$ on $[1,\infty)$ and by $c_3 u^{\mu+1/2}$ on $(0,1)$ and consequently is integrable on $(0,\infty)$. Using the dominated convergence theorem we get
\begin{eqnarray*}
   \lim_{x\to \infty} \frac{\lambda}{x^{\mu-1/2}}w_{2,\,\lambda}(v) = -\frac{\cos(\pi\mu)}{\sqrt{2\pi}}\int_0^\infty
    \frac{e^{u}K_\mu(u)e^{-vu}\sqrt{u}du}{\cos^2(\pi\mu) K_\mu^2(u)+(\pi I_\mu(u)+\sin(\pi \mu) K_\mu(u))^2},
\end{eqnarray*}
which implies
\begin{eqnarray*}
  \lim_{x\to \infty} \frac{\lambda}{x^{\mu-1/2}}w_{\lambda}(v) = w_\infty(v)\/,\quad v>0\/.
\end{eqnarray*}
Using (\ref{w1:estimate:abs}) and (\ref{w2:estimates}) from Appendix we get that
\begin{eqnarray}
   \label{wlambda:estimate}
   \frac{\lambda}{x^{\mu-1/2}}|w_{\lambda}(v)|\leq \frac{\lambda}{x^{\mu-1/2}}|w_{1,\,\lambda}(v)|+\frac{\lambda}{x^{\mu-1/2}}|w_{2,\,\lambda}(v)|\leq c_4 e^{-v\theta_\mu}+c_4 \frac{1}{(v+1)^{\mu+3/2}}
\end{eqnarray}
for some positive $\theta_\mu$. Next, we take advantage of the  following formula of the Laplace transform of $ w_\lambda(v)$ (see Lemma 3.1 in \cite{BR:2006})  
\begin{eqnarray*}
   \frac{\lambda}{x^{\mu-1/2}} \int_0^\infty e^{-r v}w_\lambda(v)dv = \frac{re^{\lambda r}x^{1/2} K_\mu(xr)}{ K_\mu(r)}-(r-(\mu^2-1/4)\frac{\lambda}{2x})\/,\quad r> 0.
\end{eqnarray*}
 Thus, taking the limit as $x$ tends to $\infty$ in the above relation
and applying the dominated convergence theorem we get 
\begin{eqnarray}
    \label{winfty:laplace}
   \int_0^\infty e^{-rv}w_\infty(v)dv = \sqrt{\frac{\pi r}{2}}\frac{e^{-r}}{K_\mu(r)}-r+\frac{\mu^2-1/4}{2}
\end{eqnarray}
for every $r> 0$.
Now let $c>0$. Observe that $\lim_{x/t\to c,x\to \infty}\frac\kappa{2t} = cv$. Using (\ref{wlambda:estimate}) and the dominated convergence theorem   we get
\begin{eqnarray*}
   \lim_{x/t\to c,\, x\to \infty} \frac{\lambda}{x^{\mu-1/2}}\int_0^\infty e^{-\kappa/2t}w_\lambda(v)dv &=& \int_0^\infty e^{-cv}w_\infty(v)dv\/.
\end{eqnarray*}
 We have
\begin{eqnarray*}
   \frac{\qmu\sqrt{2\pi t}}{\lambda e^{-\lambda^2/2t}} = \frac{x^{\mu-1/2}}{t}-(\mu^2-1/4)\frac{x^{\mu-1/2}}{2x}+\int_0^\infty e^{-\kappa/2t}w_\lambda(v)dv\/.
\end{eqnarray*}
Multiplying both sides by $\dfrac{\lambda}{x^{\mu-1/2}}$, taking limit as $x/t\rightarrow c$ and $x\rightarrow \infty$ and using (\ref{winfty:laplace}) we get
\begin{eqnarray*}   
   \lim_{x/t\to c,\,x\to\infty} \dfrac{1}{x^{\mu-1/2}}\frac{\qmu\sqrt{2\pi t}}{ e^{-\lambda^2/2t}} &=& c-\frac{\mu^2-1/4}{2}+\int_0^\infty e^{-cv}w_\infty(v)dv = \sqrt{\frac{\pi c}{2}} \frac{e^{-c}}{K_\mu(c)}>0\/.
\end{eqnarray*}
\end{proof}
}

\begin{lem}
\label{qt:estimate:inside}
For every $C>0$ there is a constant $c_1>0$ depending on $C$ and {$\mu>0$} such that   
\begin{eqnarray*}
 \frac1{c_1}   \lambda \frac{ e^{-\lambda^2/2t }}{t^{3/2}}\,x^{\mu-1/2}\le \qmu\le c_1   \lambda \frac{ e^{-\lambda^2/2t }}{t^{3/2}}\,x^{\mu-1/2}\/,
\end{eqnarray*}
whenever $x<Ct$, $x>1$. 
\end{lem}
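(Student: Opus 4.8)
The plan is to work with the representation \eqref{rep1} for $\qmu$ (with $\mu>0$) and to show that, in the regime $x<Ct$ (equivalently $t/x>1/C$), the bracketed factor
\[
B(t,x):=\frac{x^{\mu-1/2}}{t}+\int_0^\infty\bigl(e^{-\kappa/2t}-1\bigr)w_\lambda(v)\,dv
\]
is comparable, up to constants depending on $\mu$ and $C$, to $x^{\mu-1/2}/t$; since $\qmu=\lambda e^{-\lambda^2/2t}(2\pi t)^{-1/2}B(t,x)$, this immediately yields the asserted two-sided bound $\qmu\approx\lambda e^{-\lambda^2/2t}t^{-3/2}x^{\mu-1/2}$. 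Note that Proposition \ref{Prop:asymp:inside} already identifies the limiting constant along rays $x/t\to c$, so the content of the lemma is the \emph{uniformity} of the comparison over all $x>1$ with $x<Ct$; the main work is to control $B(t,x)$ from below (away from $0$) and from above, uniformly.

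First I would split the integral in $B(t,x)$ according to the two components $w_{1,\lambda}$ and $w_{2,\lambda}$ of \eqref{wlambda:sum} and invoke the Appendix estimates (Lemmas \ref{w1:estimate:lemma}, \ref{w2:estimate:lemma}) in the scaled form \eqref{wlambda:estimate}: $\tfrac{\lambda}{x^{\mu-1/2}}|w_\lambda(v)|\le c_4 e^{-v\theta_\mu}+c_4(v+1)^{-\mu-3/2}$, which is integrable in $v$ uniformly in $x$. For the \emph{upper} bound on $B(t,x)$ I would use $0\le 1-e^{-\kappa/2t}\le \kappa/2t = v(2\lambda+v)/2t$ when $\mu\le 1/2$ directly, and for $\mu>1/2$ pass through the refined representation \eqref{rep3} (which subtracts enough Taylor terms of $e^{-\kappa/2t}$ that the integrand is $O((\kappa/2t)^{l+1})$ and hence the tail in $v$ is still controlled by the integrable majorant); together with $\kappa\le 2v x$ for $\lambda<x$ and $x/t$ bounded, this gives $\bigl|\int_0^\infty(e^{-\kappa/2t}-1)w_\lambda(v)\,dv\bigr|\le C'\,\tfrac{x^{\mu-1/2}}{t}\cdot\tfrac{x}{t}\cdot(\text{const})$, so $B(t,x)\le c_1 x^{\mu-1/2}/t$ once $t>x/C$ forces $x/t<C$. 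For the \emph{lower} bound I would argue that the integral term is a negative perturbation of controlled size relative to $x^{\mu-1/2}/t$: combining \eqref{laplace01} (which gives $|\int w_\lambda|\le C x^{\mu-3/2}$, i.e. of order $(x/t)\cdot x^{\mu-1/2}/t\cdot(t/x)\cdot\ldots$) with the fact that $e^{-\kappa/2t}-1$ is bounded, one shows $\bigl|\int_0^\infty(e^{-\kappa/2t}-1)w_\lambda(v)\,dv\bigr|\le \tfrac12\,\tfrac{x^{\mu-1/2}}{t}$ as soon as $x/t$ is below a threshold depending on $\mu$ — which can always be arranged for the "large $x$" part of the regime, while the complementary region ($x$ bounded, $t$ bounded away from $0$ and $\infty$ by $x<Ct$ and $x>1$, or $t\to\infty$ with $x$ bounded) is handled by the asymptotics of $w_\lambda$ and dominated convergence, i.e. by a compactness/continuity argument exactly as in Proposition \ref{Prop:asymp:inside}.

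More precisely, I would combine two mechanisms. For $x$ large (say $x\ge M$ for a suitable $M=M(\mu,C)$), the estimate $|\int(e^{-\kappa/2t}-1)w_\lambda(v)dv|\le C'(x/t)\,x^{\mu-1/2}/t\le C'C\,x^{\mu-1/2}/t\cdot(x^{-1}t)^{-1}$... — rather, using $1-e^{-\kappa/2t}\le \kappa/2t$ and $\kappa\le 2vx+v^2$ with the integrable majorant for $\tfrac{\lambda}{x^{\mu-1/2}}w_\lambda$, one gets the integral term bounded by $C''\,\tfrac{x^{\mu-1/2}}{t}\cdot\tfrac{x}{\lambda}\cdot\tfrac1t\cdot(\text{const})$ which is $o(x^{\mu-1/2}/t)$ uniformly because $x/t<C$ and $x/\lambda\to1$; choosing $M$ so this is $\le\tfrac12\,x^{\mu-1/2}/t$ gives the lower bound there. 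For $1<x\le M$ and $x<Ct$, the quantity $\tfrac{\qmu\sqrt{2\pi t}}{\lambda e^{-\lambda^2/2t}}\cdot\tfrac{t}{x^{\mu-1/2}}=1+\tfrac{t}{x^{\mu-1/2}}\int(e^{-\kappa/2t}-1)w_\lambda dv$ is a continuous, strictly positive function of $(x,t)$ on the compact-after-one-point-compactification set $\{1\le x\le M,\ t\ge x/C\}$ (positivity for finite $t$ is clear since $\qmu>0$; as $t\to\infty$ it tends to the value given by \eqref{laplace01}, namely $1 - (\mu^2-1/4)/2x\cdot\ldots$, which is positive — or one uses the known $t\to\infty$ asymptotics), hence bounded above and below by positive constants. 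The hardest part, I expect, is the uniform upper bound for $\mu>1/2$: there the naive $1-e^{-\kappa/2t}\le\kappa/2t$ is too crude to beat the growth of $x^{\mu-1/2}$, so one must genuinely use the cancellation in \eqref{rep3} together with the moment identity \eqref{laplace02} and carefully track that the subtracted Taylor polynomial leaves an error integrable against the $w_\lambda$-majorant uniformly in the range $x<Ct$ — this is the step where the Appendix estimates on $w_\lambda$ are used in full strength.
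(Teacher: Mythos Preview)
Your lower-bound argument for large $x$ contains a real gap. You claim that the integral term $I:=\int_0^\infty(e^{-\kappa/2t}-1)w_\lambda(v)\,dv$ is $o(x^{\mu-1/2}/t)$, so that $B(t,x)\ge\tfrac12\,x^{\mu-1/2}/t$ once $x$ is large; but $I$ is in fact of the \emph{same} order as $x^{\mu-1/2}/t$ whenever $x/t$ lies in a fixed compact interval. The crude bound $1-e^{-\kappa/2t}\le\kappa/2t$ together with the majorant \eqref{wlambda:estimate} gives only $|I|\le c\,x^{\mu-1/2}/t$ with a constant $c$ you cannot force below $1$ (and for $1/2<\mu\le 3/2$ the moment $\int v^2(v+1)^{-\mu-3/2}dv$ diverges, so your bound does not even close). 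Proposition~\ref{Prop:asymp:inside} makes this precise: along $x/t\to c$, $x\to\infty$, one has $\tfrac{\lambda}{x^{\mu-1/2}}B(t,x)\to\sqrt{\pi c/2}\,e^{-c}/K_\mu(c)$, which is strictly less than $c=\lim\lambda/t$; the integral genuinely cancels a fixed positive fraction of the leading term, and only the \emph{explicit} evaluation of the limit via the Laplace-transform identity \eqref{winfty:laplace} shows that what remains is positive. A size estimate alone cannot detect this.

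The paper's proof is accordingly quite different. For large $x$ it uses Proposition~\ref{Prop:asymp:inside} plus a finite-subcover argument on a compact interval of values of $x/t$ to obtain the uniform two-sided bound \eqref{middle_estimate}, and then covers the remaining range of $x/t$ with Lemma~\ref{qt:estimate:zero}. For bounded $x$ it does \emph{not} argue by continuity of $B(t,x)$ --- note that your proposed $t\to\infty$ limit of $\tfrac{t}{x^{\mu-1/2}}B(t,x)$ is actually $0$ for $\mu>1/2$ by \eqref{laplace02}, not a positive constant, so that route fails outright --- but instead invokes the absolute-continuity relation \eqref{Bessel:AC} with $\nu=1/2$, which sandwiches $\qmu$ between $x^{\mu-1/2}e^{-c_\mu t}q_x^{(-1/2)}(t)$ and $x^{\mu-1/2}q_x^{(-1/2)}(t)$; since $q_x^{(-1/2)}(t)=\lambda e^{-\lambda^2/2t}/\sqrt{2\pi t^3}$ explicitly and $t$ ranges over a bounded set, the factor $e^{-c_\mu t}$ is harmless. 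This change-of-measure device is the key idea you are missing for the bounded-$x$ regime.
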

\begin{proof}
By Lemma~\ref{qt:estimate:zero} it is enough to consider $\mu>1/2$.
Let $0<C^\prime<C$. The fact that the limit given in Proposition~\ref{Prop:asymp:inside} exists and is strictly positive implies that for every $c\in[C^\prime,C]$ there exist $\varepsilon_c>0$, $D_c>1$ and $x_c>2$ such that
\begin{eqnarray*}
   \frac{1}{D_c}\leq\dfrac{1}{x^{\mu-1/2}}\frac{\qmu\sqrt{2\pi t}}{ e^{-\lambda^2/2t}}\leq D_c
\end{eqnarray*}
for every $(x/t,x)\in(c-\varepsilon_c,c+\varepsilon_c)\times (x_c,\infty)$. The family 
\begin{eqnarray*}
   \left\{(c-\varepsilon_c,c+\varepsilon_c)\right\}_{c\in[C^\prime,C]}
\end{eqnarray*}
is an open cover of the compact set $[C^\prime,C]$. Consequently, there exists a finite subcover $\{(c_k-\varepsilon_{c_k},c_k+\varepsilon_{c_k})\}_{k=1,\ldots,m}$. Setting $C^*=\max\{x_{c_k}:k=1,\ldots,m\}$ and $D = \max\{D_{c_k},k=1,\ldots,m\}$ we get 
\begin{eqnarray}\label{middle_estimate}
   \frac{1}{D}\leq\dfrac{1}{x^{\mu-1/2}}\frac{\qmu\sqrt{2\pi t}}{ e^{-\lambda^2/2t}}\leq D
\end{eqnarray}
for every $C^\prime\leq x/t\leq C$ and $x>C^*$, which proves the lemma for this range of $x$ and $t$. Choosing small enough the constant $C^\prime$ depending on $\mu$ we infer that using Lemma \ref{qt:estimate:zero} we complete the proof in the case $ x/t\leq C$ and $x>C^*$.

   The estimates for $x\leq C^*$ and $C^\prime\leq x/t\leq C$ can be deduced from the absolute continuity property for Bessel processes with different indices. Indeed, from (\ref{Bessel:AC}) we have
\begin{eqnarray*}
x^{\mu-1/2} e^{-c_\mu t}q_x^{(-1/2)}(t)\leq \qmu\leq x^{\mu-1/2}q_x^{(-1/2)}(t)\/,
\end{eqnarray*}
where $c_\mu=\frac{\mu^2-1/4}2>0$.
Moreover, we have
\begin{eqnarray*}
  q_x^{(-1/2)}(t) = \lambda \frac{e^{-\lambda^2/2t}}{\sqrt{2\pi t^3}}
\end{eqnarray*}
and $\exp(-c_\mu t) \geq  \exp(-c_\mu C^*/C^\prime)$. This ends the proof.

\end{proof}

In the next lemma we show a result which provides a satisfactory estimate when $t$ is large relative to $x$. This is done under some additional assumption on $\mu$.
  
    \begin{lem}\label{qt:estimate:infty1}
     {Suppose that $\mu-1/2\in \N$. We have the following expansion
     $$  \qmu = \frac{(x^{2\mu}-1)}{{\Gamma(\mu)2^{\mu}}} e^{-\lambda^2/2t} \frac{1}{t^{\mu+1}}(1+ E(x, t)).$$
    There is a  constant $c>0$  such that  for $t>0$,
$$|E(x, t)|\le c \frac xt.$$}


 \end{lem}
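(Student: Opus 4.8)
The plan is to combine the half‑integer form of $w_\lambda$ with the representation (\ref{rep3}). Write $\mu=n+1/2$, $n\in\N$. The case $n=0$ ($\mu=1/2$) is immediate: by the explicit one‑sided stable density $\qmu=q_x^{(-1/2)}(t)=\lambda e^{-\lambda^2/2t}/\sqrt{2\pi t^3}$, which is exactly the asserted formula with $E\equiv 0$, since $\Gamma(1/2)\,2^{1/2}=\sqrt{2\pi}$ and $x^{2\mu}-1=\lambda$. So assume $n\ge 1$. Since $\cos(\pi\mu)=0$, the component $w_{2,\lambda}$ in (\ref{wlambda:sum}) vanishes, so that
\[
  w_\lambda(v)=w_{1,\lambda}(v)=-\frac{x^\mu}{\lambda}\sum_{i=1}^{n}\frac{z_i\,e^{\lambda z_i}K_\mu(xz_i)}{K_{\mu-1}(z_i)}\,e^{z_i v}
\]
is a finite sum of exponentials whose exponents, the (simple) zeros $z_i$ of $K_\mu$, lie in the open left half‑plane. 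As $x\to 1^+$ the factor $K_\mu(xz_i)$ vanishes to first order, which compensates the $1/\lambda$; combined with the asymptotics of $K_\mu$ at infinity this shows each coefficient is $O(x^{\mu-3/2})$ uniformly in $x>1$, hence $|w_\lambda(v)|\le C\,x^{\mu-3/2}e^{-\theta_\mu v}$ for some $\theta_\mu>0$, and every moment $\int_0^\infty v^k|w_\lambda(v)|\,dv$ is finite.

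Substituting this $w_\lambda$ into (\ref{rep3}) — where for $\mu-1/2=n$ the polynomial subtracted from $e^{-\kappa/2t}$ has degree $n$ — and using the elementary remainder bound $|e^{-y}-\sum_{j=0}^{m}(-y)^j/j!|\le y^{m+1}/(m+1)!$ for $y\ge 0$ to justify integrating the tail of the series term by term, I would obtain
\[
  \qmu=\lambda\,\frac{e^{-\lambda^2/2t}}{\sqrt{2\pi t}}\left(\frac{(-1)^{n+1}}{(n+1)!\,(2t)^{n+1}}\int_0^\infty\kappa^{\,n+1}w_\lambda(v)\,dv+\mathcal R(t)\right),\qquad
  |\mathcal R(t)|\le\frac{1}{(n+2)!\,(2t)^{n+2}}\int_0^\infty\kappa^{\,n+2}|w_\lambda(v)|\,dv.
\]
Because $\kappa=v(2\lambda+v)$ and $t^{-1/2}t^{-(n+1)}=t^{-\mu-1}$, the first term already carries the advertised dependence on $t$ and $x$; the remaining tasks are to evaluate $\int_0^\infty\kappa^{n+1}w_\lambda$ and to bound $\mathcal R(t)$ against it.

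For the moment I would start from the Laplace‑transform formula for $w_\lambda$ (Lemma~3.1 of \cite{BR:2006}): with $G(r)=\int_0^\infty e^{-rv}w_\lambda(v)\,dv$,
\[
  G(r)=\frac r\lambda\left(\frac{x^\mu K_\mu(xr)}{K_\mu(r)}-x^{\mu-1/2}\right)+\frac{\mu^2-1/4}{2}\,x^{\mu-3/2},
\]
and for half‑integer $\mu$ one has $x^\mu K_\mu(xr)/K_\mu(r)=e^{-\lambda r}\,Q_n(xr)/Q_n(r)$, where $Q_n(y)=y^nP_n(1/y)$ is the reversed Bessel polynomial built from the polynomial factor $P_n$ of $K_{n+1/2}$; since $Q_n(0)\ne 0$, $G$ is rational in $r$ and regular at $0$. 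Writing $\kappa^{n+1}=(v^2+2\lambda v)^{n+1}$ as the differential operator $(\partial_r^2-2\lambda\partial_r)^{n+1}$ acting on $G$ and evaluating at $r=0$, the two elementary summands of $G$ drop out — they are annihilated already by $(\partial_r^2-2\lambda\partial_r)^2$, and $n+1\ge 2$ — so $\int_0^\infty\kappa^{n+1}w_\lambda$ depends only on the Taylor coefficients up to order $2n+2$ of $\tfrac r\lambda e^{-\lambda r}Q_n(xr)/Q_n(r)$. Carrying out this finite computation with $Q_n$, together with the duplication formula $\Gamma(\mu)=\Gamma(n+1/2)=(2n)!\sqrt\pi/(4^n n!)$, I expect to reach
\[
  \int_0^\infty\kappa^{\,n+1}w_\lambda(v)\,dv=\frac{(-1)^{n+1}(n+1)!\,2^{\,n+1}\sqrt{2\pi}}{\lambda\,\Gamma(\mu)\,2^{\mu}}\,(x^{2\mu}-1),
\]
which collapses the leading term in the display above to precisely $\dfrac{x^{2\mu}-1}{\Gamma(\mu)2^\mu}\,e^{-\lambda^2/2t}\,t^{-\mu-1}$; then $E(x,t)$ is the ratio of $\mathcal R(t)$ to this quantity.

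It remains to bound $\mathcal R(t)$. From $\kappa^{n+2}\le C(\lambda^{n+2}v^{n+2}+v^{2n+4})$ and $|w_\lambda(v)|\le C x^{\mu-3/2}e^{-\theta_\mu v}$ one gets $\int_0^\infty\kappa^{n+2}|w_\lambda|\,dv\le C x^{\mu-3/2}(1+\lambda^{n+2})\lesssim x^{2\mu}$, while the identity above gives $|\int_0^\infty\kappa^{n+1}w_\lambda|\asymp(x^{2\mu}-1)/\lambda$; hence $|E(x,t)|\lesssim \frac1t\cdot\frac{\lambda x^{2\mu}}{x^{2\mu}-1}\lesssim \frac xt$ uniformly in $x>1$ (here $x^{2\mu}-1\gtrsim\lambda$ because $2\mu-1=2n>0$). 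Finally, when $t\le c_0 x$ the bound $|E|\le cx/t$ is weak, and the claimed two‑sided estimate reduces to $\qmu\asymp\lambda\,x^{\mu-1/2}e^{-\lambda^2/2t}t^{-3/2}$, which is exactly what Lemma~\ref{qt:estimate:zero} supplies (on $1<x<2$ using $x^{2\mu}-1\asymp\lambda$ and $x^{\mu-1/2}\asymp 1$); enlarging $c$ to absorb $c_0$ then completes the argument. The main obstacle is the exact evaluation of $\int_0^\infty\kappa^{n+1}w_\lambda$: producing the precise constant $1/(\Gamma(\mu)2^\mu)$, and especially recognising the polynomial in $x$ that emerges from the Taylor coefficients of $Q_n(xr)/Q_n(r)$ as a multiple of the clean factor $x^{2\mu}-1$, is where the cancellations concealed in the sum over the zeros $z_i$ must be unravelled; a secondary point is that $\mathcal R$ has to be controlled uniformly in $x$, which is why the $x$‑dependent exponential bound on $w_\lambda$ (and the first‑order vanishing of $K_\mu(xz_i)$ at $x=1$) is needed.
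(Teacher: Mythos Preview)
Your skeleton --- isolate the $(n{+}1)$st moment of $\kappa$ as the leading term in the representation~(\ref{rep3}) and bound the $(n{+}2)$nd moment to control the error --- is exactly what the paper does, and your remainder estimate via $|w_\lambda(v)|\le C x^{\mu-3/2}e^{-\theta_\mu v}$ and $\kappa^{n+2}\le c(\lambda^{n+2}v^{n+2}+v^{2n+4})$ reproduces the paper's bound~(\ref{H:estimate}) verbatim.

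The divergence is in how the constant $\int_0^\infty\kappa^{\,n+1}w_\lambda(v)\,dv$ is identified. You propose to compute it directly from the Laplace transform, exploiting that for half-integer $\mu$ the ratio $e^{\lambda r}x^{1/2}K_\mu(xr)/K_\mu(r)$ collapses to a rational function $R_n(xr)/R_n(r)$ (your displayed formula for $G$ drops the factor $e^{\lambda r}$, but you then proceed as if it were there, so this is only a transcription slip). This is correct in principle, but you yourself flag the obstacle: extracting the precise constant and, more to the point, recognising the polynomial that emerges as a constant multiple of $x^{2\mu}-1$, is left undone. That is the genuine gap in the proposal.

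The paper sidesteps this computation entirely. It argues probabilistically: by the strong Markov property and scaling, $T_0^{(-\mu)}\stackrel{d}{=}x^{-2}T_0^{(-\mu)}+T_1^{(-\mu)}$ with the summands independent, so the tail of $T_1^{(-\mu)}$ is the difference of two scaled copies of the tail of $T_0^{(-\mu)}$. Getoor--Sharpe gives $t^\mu\textbf{P}_x^{(-\mu)}(T_0^{(-\mu)}>t)\to x^{2\mu}/(\Gamma(\mu{+}1)2^\mu)$, whence $t^\mu\textbf{P}_x^{(-\mu)}(T_1^{(-\mu)}>t)\to (x^{2\mu}-1)/(\Gamma(\mu{+}1)2^\mu)$; matching this against the expansion pins down $C_{l+1}(x)=\sqrt{2\pi}(x^{2\mu}-1)/(\lambda\Gamma(\mu)2^\mu)$ with no algebra on Bessel polynomials at all. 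This is where the factor $x^{2\mu}-1$ comes from cleanly.

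One further remark: your final paragraph invoking Lemma~\ref{qt:estimate:zero} for $t\le c_0x$ is unnecessary. The lemma asserts an expansion with an error bound, not a two-sided estimate; once the leading constant is known and $\mathcal R$ is bounded, $E$ is defined for all $t>0$ and your inequality $|E|\lesssim\lambda x^{2\mu}/[(x^{2\mu}-1)t]\lesssim x/t$ (which you derived correctly) finishes the job uniformly.
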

 
 \begin{proof} We use the following result proved in Lemma 4.4 of \cite{BR:2006}.
 Let  $l=\mu-1/2$.
 Then
\begin{eqnarray*}
\lim_{t\to\infty} t^{l+1} \int_0^\infty w_\lambda(v)
\left(e^{-\kappa/2t}-\sum_{0\leq j\leq l} (-1)^j\frac{1}{j!}
 \left(\frac\kappa {2t}\right)^j\right)dv
 \end{eqnarray*}
  \begin{eqnarray} \label{cm}
 = \frac{(-1)^{l+1}}{2^{l+1} (l+1)!}\int_0^\infty\kappa^{l+1} w_\lambda(v)\ dv=C_{l+1}(x)>0\/.
 \end{eqnarray}
        Let   
      \begin{eqnarray*}
 H(\lambda,t)&=&\int_0^\infty w_\lambda(v)
\left(e^{-\kappa/2t}-\sum_{0\leq j\leq l+1} (-1)^j\frac{1}{j!}
 \left(\frac\kappa {2t}\right)^j\right)dv.
 \end{eqnarray*}
 Using (\ref{cm}) we may write    
      \begin{eqnarray*} 
 \lambda \frac{e^{-\lambda^2/2t} }{ \sqrt{2\pi t} } H(\lambda,t)&=&  \lambda \frac{e^{-\lambda^2/2t} }{ \sqrt{2\pi t} }\int_0^\infty w_\lambda(v)
\left(e^{-\kappa/2t}-\sum_{0\leq j\leq l+1} (-1)^j\frac{1}{ j!}
 \left(\frac\kappa {2t}\right)^j\right)dv\\&=&\lambda \frac{e^{-\lambda^2/2t} }{ \sqrt{2\pi t} }\int_0^\infty w_\lambda(v)
\left(e^{-\kappa/2t}-\sum_{0\leq j\leq l} (-1)^j\frac{1}{j!}
 \left(\frac\kappa {2t}\right)^j\right)dv \\
 &&- \lambda \frac{e^{-\lambda^2/2t} }{ \sqrt{2\pi t} }C_{l+1}t^{-l-1}\\
 &=& \qmu- \lambda \frac{e^{-\lambda^2/2t} }{ \sqrt{2\pi t} }C_{l+1}t^{-l-1},
 \end{eqnarray*}
 where  we applied (\ref{rep3}) in the last step.
 Observe that $$ \kappa^{l+2}\le c( \lambda^{l+2}v^{l+2}+v^{2l+4}),$$
 for some  constant $c$. Next,
  $$\left|e^{-\kappa/2t} - \sum_{0\leq j\leq l+1} (-1)^j\frac{1}{ j!}
      (\frac\kappa {2t})^j \right| \le  \left(\frac\kappa {2t}\right)^{l+2}\le c\left( \frac{\lambda^{l+2}v^{l+2}+v^{2l+4}}{t^{l+2}}\right),$$
      which together with the estimate (see Lemma \ref{w1:estimate:lemma} in Appendix)
$$ |w_{\lambda}(v)|=|w_{1,\lambda}(v)|\le  c x^{\mu-3/2} e^{-\theta_\mu v},$$
 leads to the following bound for       
      $H(\lambda,t)$:

 \begin{eqnarray}\label{H:estimate}
     | H(\lambda,t)| \le 
      \int_0^\infty \left(\frac\kappa {2t}\right)^{l+2}|w_\lambda(v)|dv\le c x^{\mu-3/2} \frac{(\lambda^{l+2}+1)}{t^{l+2}}\approx 
       \frac{x^{2\mu}}{t^{l+2}}.
      \end{eqnarray}  
   To complete the proof we need   
       to find the constant $C_{l+1}$. Let $T_0^{(-\mu)}$ denote the hitting time of  $0$ if we start the process from $x$. Due to the strong Markov property and the scaling property  we obtain the following equality of the distributions:
 $$T_0^{(-\mu)}{\stackrel{d}{=}}\frac1 {x^2}T_0^{(-\mu)} + T_1^{(-\mu)},$$
 where $T_0^{(-\mu)}$ and $ T_1^{(-\mu)}$ are independent. It follows that  
 
 $${P_x^{(-\mu)}}(T_0^{(-\mu)}>t)\cong  {P_x^{(-\mu)}}(T_0^{(-\mu)}>x^2t) +{P_x^{(-\mu)}}(T_1^{(-\mu)}>t),\ t\to \infty.$$
 {Note that by the result of Getoor and Sharpe \cite{GetoorSharpe:1979}} we know that  $t^{\mu}{P_x^{(-\mu)}}(T_0^{(-\mu)}>t)\cong\frac {x^{2\mu}}{\Gamma(\mu+1)2^{\mu}}$, which implies that 
 $$t^{\mu}{P_x^{(-\mu)}}(T_1^{(-\mu)}>t)\cong\frac {x^{2\mu}-1}{\Gamma(\mu+1)2^{\mu}}.$$
 Fom (\ref{H:estimate}) and 
  \begin{eqnarray*} 
\qmu &=& 
 \lambda \frac{e^{-\lambda^2/2t} }{ \sqrt{2\pi t} }C_{l+1}t^{-l-1}+\lambda \frac{e^{-\lambda^2/2t} }{ \sqrt{2\pi t} } H(\lambda,t)
 \end{eqnarray*}
 we infer
 
 $$t^{\mu}{P_x^{(-\mu)}}(T_1^{(-\mu)}>t)\cong \frac{\lambda }{\mu \sqrt{2\pi } }C_{l+1}.$$
  This in turn shows that 
 %
 %
  $C_{l+1}(x)=\frac {\sqrt{2\pi}\mu(x^{2\mu}-1)}{\lambda\Gamma(\mu+1)2^{\mu}}=\frac {\sqrt{2\pi}(x^{2\mu}-1)}{\lambda\Gamma(\mu)2^{\mu}}$
      and completes the proof.

    \end{proof}
    
    \begin{rem} Let $l= \mu-1/2\in\N$ and  $k\in \N$. Since all moments of $\kappa$  with respect to $w_\lambda(v)dv$  exist we can write
     $$ | {\qmu}- \lambda \frac{e^{-\lambda^2/2t} }{ \sqrt{2\pi t} }\sum_{i=1}^k \frac{C_{l+i}}{t^{l+i}}|\le c\lambda \frac{e^{-\lambda^2/2t} }{ \sqrt{2\pi t} }\frac{x^{2\mu+k-1}}{t^{l+k+1}}, $$
 for some constant $c$, { depending only on $\mu$ and $k$},   where the constants $C_{l+i}= C_{l+i}(x)$ can be found by similar considerations as above.
     \end{rem}
\begin{lem} Let $\mu-1/2\notin \N$ and let $l=[\mu+1/2]$.  There are constants $c_1, c_2, c_3$ depending only on $\mu$ such that
\label{qt:estimate:infty2}
$$c_2\frac{\lambda x^{2\mu-1}} { t^{\mu+1}} e^{-\lambda^2/2t}\left( 1-c_3\left(\frac{x} { t}\right)^{l-\mu+1/2}\right) \le q^{(-\mu)}_x(t)\le c_1 \frac{\lambda x^{2\mu-1}} { t^{\mu+1}} e^{-\lambda^2/2t},$$
 for $t>x>1$.
Note that $l-\mu+1/2>0$.
\end{lem}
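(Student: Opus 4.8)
The plan is to start from the representation \eqref{rep}, i.e. $\qmu = \lambda\,\frac{e^{-\lambda^2/2t}}{\sqrt{2\pi t}}\bigl(\frac{x^{\mu-1/2}}{t} + \int_0^\infty(e^{-\kappa/2t}-1)w_\lambda(v)\,dv\bigr)$, and to isolate the leading-order term in the regime $t>x>1$. Since $\mu-1/2\notin\N$ we have $\cos(\pi\mu)\neq 0$, so $w_\lambda$ has both a finite exponential sum part $w_{1,\lambda}$ and the integral part $w_{2,\lambda}$; I would use the pointwise bounds from the Appendix, namely $|w_{1,\lambda}(v)|\le c\,x^{\mu-3/2}e^{-\theta_\mu v}$ (Lemma \ref{w1:estimate:lemma}) and the companion bound for $w_{2,\lambda}$ (Lemma \ref{w2:estimate:lemma}), which together give $\frac{\lambda}{x^{\mu-1/2}}|w_\lambda(v)|\le c_4 e^{-v\theta_\mu}+c_4(v+1)^{-\mu-3/2}$ as in \eqref{wlambda:estimate}. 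The point of the Taylor expansion \eqref{rep3} with $l=[\mu+1/2]$ is that the first few moments of $\kappa$ with respect to $w_\lambda(v)\,dv$ that are subtracted off are exactly the integer moments that exist; what governs the leading decay in $t$ is then the first \emph{non-integer} moment $\int_0^\infty \kappa^{\mu-1/2}w_\lambda(v)\,dv$, which is finite by the above bounds since $\mu-1/2 < l$, and this produces the $t^{-(\mu+1/2)}$ contribution to the bracket, hence the $t^{-\mu-1}$ prefactor overall.

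The concrete steps I would carry out are as follows. First, write $\qmu = \lambda\,\frac{e^{-\lambda^2/2t}}{\sqrt{2\pi t}}\,H(\lambda,t)$ with $H(\lambda,t)=\int_0^\infty w_\lambda(v)\bigl(e^{-\kappa/2t}-\sum_{0\le j\le l}(-1)^j\frac{1}{j!}(\frac{\kappa}{2t})^j\bigr)\,dv$, using \eqref{rep3}. Second, split the integral at $v=1$ (or at $v\asymp t/\lambda$). On the region where $\kappa/2t\le 1$, use $|e^{-s}-\sum_{j\le l}(-1)^j s^j/j!|\asymp s^{l+1}$ together with $\kappa\asymp\lambda v$ for small $v$ and $\kappa\asymp v^2$ for large $v$; combined with the exponential/polynomial decay of $w_\lambda$ this shows the dominant contribution comes from $v$ of order $1$, where $\kappa\asymp\lambda\asymp x$, and yields $H(\lambda,t)\asymp \frac{x^{\mu-1/2}\lambda^{-1}x^{2\mu-1}\cdot x^{\,l+1}}{t^{\,l+1}}$... more precisely one extracts $H(\lambda,t)\asymp \frac{x^{2\mu-1}}{\lambda}\,t^{-(\mu+1/2)}$ by recognizing the controlling quantity as $t^{-(\mu+1/2)}\int_0^\infty \kappa^{\mu-1/2}w_\lambda(v)\,dv$, and one checks this integral is strictly positive and comparable to $x^{2\mu-1}/\lambda$ (its sign can be pinned down the same way the constant $C_{l+1}$ was identified in Lemma \ref{qt:estimate:infty1}, via the Getoor--Sharpe tail asymptotics for $T_0^{(-\mu)}$, or alternatively from the Laplace-transform formula for $\int e^{-rv}w_\lambda(v)\,dv$ and a Tauberian/Abelian argument). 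Third, bound the remaining terms: the part of the expansion beyond $j=l$, i.e. replacing the Taylor polynomial of order $l$ by that of order... here I would estimate $H(\lambda,t)$ from above directly by $\int_0^\infty(\frac{\kappa}{2t})^{l+1}|w_\lambda(v)|\,dv$, but since $l+1 > \mu+1/2$ this would over-count by a factor $(x/t)^{\,l+1-(\mu+1/2)}$; the honest upper bound $c_1\frac{\lambda x^{2\mu-1}}{t^{\mu+1}}e^{-\lambda^2/2t}$ comes from using $|e^{-s}-\sum_{j\le l}(-1)^j s^j/j!|\le c\,s^{\mu-1/2}$ (valid for all $s\ge 0$ since $\mu-1/2<l+1$ and the expansion is alternating when... one uses $s^{\mu-1/2}\min(1,s^{\,l+1-(\mu-1/2)})\le s^{\mu-1/2}$) together with the finiteness of $\int\kappa^{\mu-1/2}|w_\lambda|$. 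Fourth, for the lower bound, write $H(\lambda,t)=t^{-(\mu-1/2)}\int\kappa^{\mu-1/2}w_\lambda\,dv\,(1+o(1))$ is too crude for a uniform statement; instead subtract and control the error: $H(\lambda,t) = c\,t^{-(\mu+1/2)}x^{2\mu-1}\lambda^{-1} - (\text{error})$, where the error is bounded by the next term, of order $t^{-(l+1)}x^{\,2\mu-1+(l+1)-(\mu+1/2)}\lambda^{-1}$... collecting gives the factor $1-c_3(x/t)^{\,l-\mu+1/2}$, which is exactly the stated form, and is meaningful precisely because $l-\mu+1/2=[\mu+1/2]-\mu+1/2>0$.

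The main obstacle is Step 3/4: getting a \emph{clean, uniform} separation of the leading non-integer-order term from the remainder of the Taylor expansion, with the error controlled by the correct power $(x/t)^{l-\mu+1/2}$ uniformly in $t>x>1$ rather than just asymptotically as $t\to\infty$. This requires the right interpolation inequality for $|e^{-s}-\sum_{j\le l}(-1)^j s^j/j!|$ — namely bounding it by $c\,s^{\mu-1/2}$ globally and by $c\,s^{l+1}$ for small $s$, and splitting the $v$-integral accordingly at the threshold $\kappa/2t\asymp 1$ — and careful bookkeeping of how $\kappa\asymp\lambda v\vee v^2$ interacts with the decay $|w_\lambda(v)|\lesssim x^{\mu-3/2}(e^{-\theta_\mu v}+(v+1)^{-\mu-3/2})$ to produce powers of $x$ (from $v\asymp 1$, $\kappa\asymp x$) versus powers of $t$ (from the $(\kappa/2t)$ factors). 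The identification of the constant / positivity of $\int\kappa^{\mu-1/2}w_\lambda\,dv$ is a secondary point that can be handled exactly as in the proof of Lemma \ref{qt:estimate:infty1} using \cite{GetoorSharpe:1979}, or sidestepped for the comparability statement by noting $q^{(-\mu)}_x(t)>0$ and matching orders. Everything else — plugging into \eqref{rep3}, the exponential prefactor, reduction to $\mu>0$ via \eqref{BesselTime:indices} if needed — is routine.
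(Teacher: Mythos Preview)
Your overall plan---start from \eqref{rep3} and extract the leading $t^{-(\mu+1/2)}$ contribution to the bracket---is the right strategy, but the execution has a genuine gap that would make the argument fail as written.

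First, the inequality you rely on for the upper bound, $\bigl|e^{-s}-\sum_{j\le l}(-1)^j s^j/j!\bigr|\le c\,s^{\mu-1/2}$ for all $s\ge 0$, is \emph{false}: for large $s$ the Taylor remainder is dominated by the last subtracted term and satisfies $|R_l(s)|\sim s^l/l!$, while $\mu-1/2<l$, so $s^{\mu-1/2}=o(s^l)$. The alternating nature of the expansion gives $|R_l(s)|\le s^{l+1}/(l+1)!$ only for small $s$; globally the correct two-sided estimate is $R_l(s)\approx(-1)^{l+1}\dfrac{s^{l+1}}{s+1}$, not $s^{\mu-1/2}$. Relatedly, your ``fractional moment'' $\int_0^\infty\kappa^{\mu-1/2}w_\lambda(v)\,dv$ is not a term in any expansion of $e^{-\kappa/2t}$, and with the bound \eqref{wlambda:estimate} you cite (decay only $(v+1)^{-\mu-3/2}$) it is not even finite once $\mu\ge 3/2$, since then $\kappa^{\mu-1/2}(v+1)^{-\mu-3/2}\sim v^{\mu-5/2}$ is non-integrable.

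What you are missing is exactly the step the paper uses: split the integral according to $w_\lambda=w_{1,\lambda}+w_{2,\lambda}$ \emph{before} estimating. The exponential piece $w_{1,\lambda}$ has all moments, so $\int R_l(\kappa/2t)w_{1,\lambda}\,dv$ is bounded by $(\kappa/2t)^{l+1}$-moments and produces the error term of order $(x/t)^{l-\mu+1/2}$ times the main term. The crucial observation is that $R_l(s)$ and $w_{2,\lambda}(v)$ \emph{have the same sign} $(-1)^{l+1}$ (since $-\cos(\pi\mu)$ has sign $(-1)^{l+1}$ for $\mu\in(l-\tfrac12,l+\tfrac12)$), so their product is positive and one can apply the two-sided estimates $R_l(s)\approx(-1)^{l+1}\dfrac{s^{l+1}}{s+1}$ and Lemma~\ref{w2:estimate:lemma} simultaneously to get
\[
q_{x,2}^{(-\mu)}(t)\ \approx\ \lambda\,\frac{e^{-\lambda^2/2t}}{t^{l+1/2}}\int_0^\infty \frac{x^{2\mu-1}}{(v+1)^{\mu+3/2}(v+x)^{\mu+1/2}}\,\frac{\kappa^{l+1}}{\kappa+t}\,dv,
\]
after which a direct (if somewhat laborious) estimate of this integral via the substitution $\kappa/t=s$ yields the stated $\lambda x^{2\mu-1}t^{-\mu-1}e^{-\lambda^2/2t}$. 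No appeal to Getoor--Sharpe or Tauberian arguments is needed: positivity and uniform two-sided control come for free from the sign match. Your proposal, by lumping $w_\lambda$ into a single absolute-value bound, discards precisely this sign information and is then forced into heuristics that cannot be made uniform.
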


\begin{proof}
Applying (\ref{rep3}) we have 
\begin{eqnarray*}
     \qmu& =& \lambda \frac{e^{-\lambda^2/2t} }{ \sqrt{2\pi t} }
      \int_0^\infty \left(e^{-\kappa/2t} - \sum_{0\leq j\leq l} (-1)^j\frac{1}{ j!}
      \(\frac\kappa {2t}\right)^j \)w_\lambda(v)dv\\
    & =& \lambda \frac{e^{-\lambda^2/2t} }{ \sqrt{2\pi t} }
      \int_0^\infty \left(e^{-\kappa/2t} - \sum_{0\leq j\leq l} (-1)^j\frac{1}{ j!}
      \(\frac\kappa {2t}\right)^j \)w_{\lambda,1}(v)dv\\
      &+& \lambda \frac{e^{-\lambda^2/2t} }{ \sqrt{2\pi t} }
      \int_0^\infty \left(e^{-\kappa/2t} - \sum_{0\leq j\leq l} (-1)^j\frac{1}{ j!}
      \(\frac\kappa {2t}\right)^j \)w_{\lambda,2}(v)dv \\
      &=& q^{(-\mu)}_{x,1}(t)+ q^{(-\mu)}_{x,2}(t)
      .
      \end{eqnarray*}
  The upper estimate of $q^{(-\mu)}_{x,1}(t)$ is obtained using almost the same arguments  as in the proof of (\ref{H:estimate}) in Lemma \ref{qt:estimate:infty1}. The resulting bound is of the following form:
  
          \begin{eqnarray}
 |q^{(-\mu)}_{x,1}(t)|\le c x^{2\mu-1}\lambda \frac{e^{-\lambda^2/2t} }{  t^{\mu+1} }\left(\frac{x}{t}\right)^{l-\mu+1/2}.\label{q1:estimate}
 \end{eqnarray}
 Next, we deal with  $q^{(-\mu)}_{x,2}(t)$. 
  Observing that 
  
 $$ e^{-\kappa/2t} - \sum_{0\leq j\leq l} (-1)^j\frac{1}{ j!}
      \(\frac\kappa {2t}\right)^j \approx(-1)^{l+1} \frac {\kappa^{l+1}}{t^l(\kappa +t)}$$
  and using Lemma \ref{w2:estimate:lemma} from Appendix, where the estimate of $w_{\lambda,2}(v)$ is provided,  we have

\begin{eqnarray*}
     q^{(-\mu)}_{x,2}(t)
&\approx&\lambda \frac{e^{-\lambda^2/2t} }{ t^{l+1/2} }
      \int_0^\infty \frac {x^{2\mu-1}}{(v+1)^{\mu+3/2}(v+x)^{\mu+1/2}}\frac {\kappa^{l+1}}{(\kappa +t)}dv.
      \end{eqnarray*}
  We need to effectively estimate the integral   
  
   $$ J(t,x)=\int_0^\infty \frac {x^{2\mu-1}}{(v+1)^{\mu+3/2}(v+x)^{\mu+1/2}}\frac {\kappa^{l+1}}{(\kappa +t)}dv.$$
       Using the folowing  change of variables
$\frac {\kappa}t= s$ we obtain 
$\frac {\kappa+v^2}{v t}  dv=  ds,$ which yields

\begin{eqnarray}\label{change}\frac { dv} v\le \frac { ds} s\le 2\frac { dv} v. \end{eqnarray}
   
     Assume that $x>2$.  
 Thus,  $\kappa=2\lambda v+v^2\approx v(v+x)$ and 
      
   $$ J(t,x)=  \int_0^\infty \frac {x^{2\mu-1}(v+x)^{l-\mu+1/2}v^{l+1}}{(v+1)^{\mu+3/2}}\frac {1}{(\kappa +t)}dv=J_1(t,x)+J_2(t,x),$$
    where  
 $$ J_1(t,x)=  \int_0^1\frac {x^{2\mu-1}(v+x)^{l-\mu+1/2}v^{l+1}}{(v+1)^{\mu+3/2}}\frac {1}{(\kappa +t)}dv\approx
 x^{\mu+l-1/2}\int_0^1\frac {v^{l+1}}{(\kappa +t)}dv$$
and        
 $$ J_2(t,x)=  \int_1^\infty\frac {x^{2\mu-1}(v+x)^{l-\mu+1/2}v^{l+1}}{(v+1)^{\mu+3/2}}\frac {1}{(\kappa +t)}dv\approx
 x^{2\mu-1}\int_1^\infty\frac {\kappa^{l-\mu+1/2}}{v(\kappa +t)}dv.$$
  Applying the above change of variables and (\ref{change}) we obtain

$$J_2(t,x)\approx x^{2\mu-1} \int_1^\infty\frac {\kappa^{l-\mu+1/2}}{v(\kappa +t)}dv\approx x^{2\mu-1}\int_{(1+x)/t}^\infty\frac {(st)^{l-\mu+1/2}}{s(s+1)t}ds=x^{2\mu-1}\frac {t^l} {t^{\mu+1/2}}\int_{(1+x)/t}^\infty \frac{s^{l-\mu-1/2}}{s+1}ds.$$
Observing  that  $\int_{0}^\infty \frac{s^{l-\mu-1/2}}{s+1}ds<\infty$ we arrive at 

$$J_2(t,x)\approx x^{2\mu-1}  \frac {t^l} {t^{\mu+1/2}},\quad t>x.$$
The first integral $ J_1(t,x)$ we trivially estimate 
$$ J_1(t,x) \le c
 x^{2\mu-l}\left(\frac {x}{t}\right)^{l+1/2-\mu}t^{l-1/2-\mu}\le c J_2(t,x).$$
This yields the following estimate 
\begin{eqnarray}\label{integral1}
J(t,x)\approx x^{2\mu-1}  \frac {t^l} {t^{\mu+1/2}},\quad  t>x.\end{eqnarray}
   Next, assume that $x\le 2$.
 Thus,  $\kappa\approx v^2,\ v\ge 1$, and

   $$ J(t,x)\approx \int_0^\infty \frac {1}{(v+1)^{2\mu+2}}\frac {(\lambda v)^{l+1}+ v^{2(l+1)} }{(\kappa +t)}dv= J_3(t,x)+ J_4(t,x),$$
   where $$  J_4(t,x)= \int_1^\infty \frac {1}{(v+1)^{2\mu+2}}\frac {(\lambda v)^{l+1}+ v^{2(l+1)} }{(\kappa +t)}dv\approx \int_1^\infty \frac {v^{2(l-\mu)} }{(v^2 +t)}dv \approx \frac {t^l} {t^{\mu+1/2}}.$$
   Clearly 
  $$  J_3(t,x)= \int_0^1 \frac {1}{(v+1)^{2\mu+2}}\frac {(\lambda v)^{l+1}+ v^{2(l+1)} }{(\kappa +t)}dv\le c\frac 1t\le c  J_4(t,x)$$
  if $t>1$.
  Obviously this implies that (\ref{integral1}) holds in the case $x\le2$. 
  
  Using this estimate we finally obtain that 
\begin{eqnarray}
       q^{(-\mu)}_{x,2}(t)
&\approx&\lambda \frac{e^{-\lambda^2/2t} }{ t^{l+1/2} }
      J(t,x)\nonumber\\
     &\approx& \lambda x^{2\mu-1} \frac{e^{-\lambda^2/2t}} { t^{\mu+1}}, \quad t>x>1.\label{q2:estimate} \end{eqnarray}
A combination of (\ref{q1:estimate}) and (\ref{q2:estimate}) ends the proof.
\end{proof}

\begin{lem} Let $\mu=0$. 
\label{qt:zero:estimate:infty}
  For $t>2x$,
  
  \label{qt:estimate:large}
$$ q_x^{(0)}(t) \approx \frac{\lambda} x\frac{e^{-\lambda^2/2t} }{ t }
  \frac {1+\log x}{(1+\log \frac tx)(1+\log t)}. $$

\end{lem}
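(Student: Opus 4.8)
The plan is to start from the basic integral representation \pref{rep1} for $\mu=0$, i.e.
\begin{eqnarray*}
   q_x^{(0)}(t) = \lambda\,\frac{e^{-\lambda^2/2t}}{\sqrt{2\pi t}}\left(\frac{x^{-1/2}}{t}+\int_0^\infty\bigl(e^{-\kappa/2t}-1\bigr)w_\lambda(v)\,dv\right),
\end{eqnarray*}
and to analyze the integral term asymptotically when $t>2x$. Since the target estimate has no Gaussian factor other than $e^{-\lambda^2/2t}$ and the prefactor $\lambda/(x\sqrt t)$ matches $\lambda e^{-\lambda^2/2t}/(x t)$ once we pull out $1/\sqrt{2\pi t}$, the whole content is to show that
\begin{eqnarray*}
   \frac{x^{-1/2}}{t}+\int_0^\infty\bigl(e^{-\kappa/2t}-1\bigr)w_\lambda(v)\,dv \approx \frac{1}{x^{1/2}}\,\frac{1+\log x}{(1+\log\frac tx)(1+\log t)}
\end{eqnarray*}
for $t>2x>2$. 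The first term $x^{-1/2}/t$ is negligible compared to the claimed right-hand side in this regime (it is smaller by a factor $\sim (\log t)^2/(t\log x)$ roughly), so everything hinges on the integral.

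The key input is the estimate of $w_\lambda$ for $\mu=0$ from Lemma~\ref{w:muzero:estimate:lemma} in the Appendix, which I would expect to say something like $w_\lambda(v)\approx -\,c\,x^{-3/2}\,\bigl[(v+1)(v+x)\bigr]^{-1}\cdot(\text{logarithmic corrections})$ — i.e. a negative function with polynomial-times-logarithmic decay, reflecting the $K_0$ logarithm at zero. Using $\kappa=v(2\lambda+v)\approx v(v+x)$ for $x>2$, one writes $1-e^{-\kappa/2t}\approx (\kappa/2t)\wedge 1 \approx (v(v+x)/t)\wedge 1$, and then splits the integral $\int_0^\infty(1-e^{-\kappa/2t})|w_\lambda(v)|\,dv$ at the point $v_0$ where $\kappa(v_0)\approx t$, namely $v_0\approx \sqrt t$ (since $t>x$). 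On $(0,v_0)$ one has $1-e^{-\kappa/2t}\approx v(v+x)/t$, and this kills the decay of $w_\lambda$, leaving an integrand of size roughly $x^{-3/2}\cdot(\text{log factor})/t$, and the integral is governed by a change of variables producing the logarithmic ratios; on $(v_0,\infty)$ one has $1-e^{-\kappa/2t}\approx 1$ and integrates the tail of $|w_\lambda|$. I would carry out both pieces with the substitution $s=\kappa/t$ (so that $ds/s\approx dv/v$, exactly as in the proof of Lemma~\ref{qt:estimate:infty2}) to convert everything into an integral of the form $\int \frac{(\log\text{-stuff})}{s(s+1)}\,ds$ over an interval with endpoints of order $x/t$ and $1$, whose value is the product of the three logarithmic terms in the statement. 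The $1+\log x$ in the numerator comes from the $x^{-3/2}$ normalization of $w_\lambda$ combined with the logarithmic correction near $v\approx x$; the $(1+\log(t/x))^{-1}$ and $(1+\log t)^{-1}$ come from the lengths of the ranges $v\in(\text{const},\sqrt t)$ measured on the two relevant logarithmic scales.

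The main obstacle I anticipate is bookkeeping the logarithmic factors correctly: unlike the power-law case $\mu\neq 0$, here the comparison constants must track several competing logarithms ($\log x$, $\log(t/x)$, $\log t$, and $\log(v+1)$, $\log(v+x)$ inside $w_\lambda$), and one must be careful that in the regime $t>2x$ these combine into exactly the claimed expression rather than, say, $\log(tx)$ or $\log(t/x)\cdot\log x$. Concretely, the delicate point is the behavior of $w_0$ near $v\sim x$ and near $v\sim \sqrt t$, where the logarithmic weight changes character; I would handle this by splitting $(0,\infty)$ into $(0,1)$, $(1,x)$, $(x,\sqrt t)$, $(\sqrt t,\infty)$ and estimating $|w_\lambda|$ and $1-e^{-\kappa/2t}$ on each piece separately, then checking that the dominant contribution is the middle ranges and matches. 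A secondary technical point is justifying that the sign of $w_\lambda$ is constant (negative) for $\mu=0$ so that no cancellation occurs and upper and lower bounds of the same order both follow; this should again come directly from the Appendix estimate of $w_\lambda$, possibly together with the identity \pref{laplace01} which for $\mu=0$ gives $\int_0^\infty w_\lambda(v)\,dv = -x^{-1/2}/(8x)<0$, confirming the negativity on average and letting us write $1-e^{-\kappa/2t}$ times $|w_\lambda|$ throughout.
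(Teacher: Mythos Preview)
Your approach is essentially the paper's: start from \pref{rep1}, replace $1-e^{-\kappa/2t}$ by $\kappa/(\kappa+t)$, use the negativity and the piecewise estimate of $-w_\lambda$ from Lemma~\ref{w:muzero:estimate:lemma}, and split the $v$-integral. Two execution details your sketch does not yet have right: first, the Appendix gives $-w_\lambda(v)$ in three distinct regimes (roughly $x^{-3/2}$ on $(0,3/2)$, $x^{-3/2}v^{-3/2}/\log v$ on $(3/2,x)$, and $(\log x)/(xv^2\log^2 v)$ on $(x,\infty)$), not a single product-with-log formula; second, since $\kappa\approx vx$ for $v<x$ but $\kappa\approx v^2$ for $v>x$, the transition $\kappa\approx t$ occurs at $v\approx t/x$ in the first range and at $v\approx\sqrt t$ in the second, so the paper treats $2x<t<x^2$ and $t>x^2$ separately, the dominant contribution coming from $v\in(3/2,x)$ (split at $t/x$) in the former and from $v\in(x,\infty)$ (split at $\sqrt t$) in the latter. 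The term $x^{-1/2}/t$ is not always negligible but is at most comparable and is absorbed; the case $1<x\le 2$ is handled on its own with $\kappa\approx v^2$ throughout.
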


\begin{proof}
 Recalling  the representation (\ref{rep1}) for $\qmu$ and observing that $e^{-\kappa/2t} - 1\approx \frac {-\kappa}{(\kappa +t)}$ we have (note that $w_\lambda(v)\le 0$)  
\begin{eqnarray*}q_x^{(0)}(t) &=& \lambda \frac{e^{-\lambda^2/2t} }{ \sqrt{2\pi t} }
   \(x^{-1/2}/2t +
    \int_0^\infty \(e^{-\kappa/2t} - 1\) w_\lambda(v)dv \)\\
  &\approx& \lambda \frac{e^{-\lambda^2/2t} }{ \sqrt{2\pi t} }
   \(x^{-1/2}/2t +
    \int_0^\infty \frac {\kappa}{(\kappa +t)}(-w_\lambda(v))dv \)  
    .\end{eqnarray*}
Hence, it is enough to estimate the integral 
 $$J(t,x)=\int_0^\infty \frac {\kappa}{(\kappa +t)}(-w_\lambda(v))dv.$$

We start with the case $x>2$. Due to Lemma \ref{w:muzero:estimate:lemma} (see Appendix) we have
$$ -w_\lambda(v) 
\approx\left\{
\begin{array}{ll}
 \frac{1}{x^{3/2}}, & \hbox{$v<3/2$,} \\
    \frac{1}{x^{3/2}v^{3/2}\log v}, & \hbox{$3/2 \leq v<x$,}\\
   \frac{\log x}{xv^2\log^2v}, &\hbox{$v\ge x>2$.}\end{array}
\right.$$
We write
  
\begin{eqnarray*}
      J(t,x)&=& \(\int_0^2 + \int_2^x + \int_x^\infty\) \frac {\kappa}{(\kappa +t)}(-w_{\lambda}(v))dv
      \\
      &=& J_1(t,x)+J_2(t,x)+ J_3(t,x).
      \end{eqnarray*}
  Note that 
$$ \frac {\kappa}{(\kappa +t)}
\approx\left\{
\begin{array}{ll}
 \frac {vx}{(vx +t)}, & \hbox{$v<x$,} \\
    \frac {v^2}{(v^2 +t)}, & \hbox{$ v\ge x$,}\end{array}
\right.$$
  which implies
   $$ J_1(t,x)\approx \int_0^{3/2} \frac{1}{x^{3/2}}\frac {\kappa}{(\kappa +t)}dv\approx \int_0^{3/2} \frac{1}{x^{3/2}}\frac {vx}{(vx +t)}dv\le \frac{3}{2x^{1/2}t},  $$
   
      $$ J_2(t,x)\approx \int_{3/2}^x \frac{1}{x^{3/2}v^{3/2}\log v}\frac {\kappa}{(\kappa +t)}dv\approx \int_{3/2}^x \frac{1}{x^{3/2}v^{3/2}\log v}\frac {vx}{(vx +t)}dv,$$
   $$  J_3(t,x)(t,x)\approx \int_x^\infty \frac{\log x}{xv^2\log^2v}\frac {\kappa}{(\kappa +t)}dv\approx \int_x^\infty \frac{\log x}{xv^2\log^2v}\frac {v^2}{(v^2 +t)}dv.$$
Assume that $2x<t<x^2$.  First, we   deal with 
  \begin{eqnarray*} J_2(t,x)&\approx& \frac{1}{x^{1/2}}\int_{3/2}^{t/x}\frac{1} {v^{1/2}\log v}\frac {1}{(vx +t)}dv+\frac{1}{x^{1/2}}\int_{t/x}^x\frac{1} {v^{1/2}\log v}\frac {1}{(vx +t)}dv\\
  &\approx& \frac{1}{x^{1/2}t}\int_{3/2}^{t/x}\frac{1} {v^{1/2}\log v}dv+\frac{1}{x^{3/2}}\int_{t/x}^x\frac{1} {v^{3/2}\log v}dv\\&\approx& \frac{1}{x^{1/2}t}\int_{3/2}^{t/x}\frac{1} {v^{1/2}\log v}dv+\frac{1}{x^{3/2}}\int_{t/x}^x\frac{1} {v^{3/2}\log v}dv.\end{eqnarray*}
  We have 
  $$
  \int_{3/2}^{t/x}\frac{1} {v^{1/2}\log v}dv\approx \sqrt{t/x}\frac 1{\log \frac tx}
  $$
 and
 $$
  \int_{t/x}^x\frac{1} {v^{3/2}\log v}dv\le 2 \sqrt{x/t}\frac 1{\log \frac tx},
  $$
which shows that 
   $$ J_2(t,x)\approx  \frac 1{x\sqrt{t}}\frac 1{\log \frac tx}.$$
   Next,
   
   $$  J_3(t,x)(t,x)\approx \int_x^\infty \frac{\log x}{x\log^2v}\frac {1}{(v^2 +t)}dv\approx \frac{\log x}x \int_x^\infty \frac{1}{v^2\log^2v}dv\approx \frac  1{ x^2\log x}\/. $$
   Combining all the estimates we see that for  $2x<t<x^2$ we have 
   
   $$ J(t,x)\approx J_2(t,x)\approx  \frac 1{x\sqrt{t}}\frac 1{\log \frac tx}.$$
   
   Next, we assume that  $t>x^2$. 
      \begin{eqnarray*}  J_3(t,x)(t,x)&\approx& \int_x^{\sqrt{t}} \frac{\log x}{x\log^2v}\frac {1}{(v^2 +t)}dv+ \int_{\sqrt{t}}^\infty \frac{\log x}{x\log^2v}\frac {1}{(v^2 +t)}dv\\
 &\approx&    \frac{\log x}{xt}\int_x^{\sqrt{t}} \frac{1}{\log^2v}dv+ \frac{\log x}{x}\int_{\sqrt{t}}^\infty \frac{1}{v^2\log^2v}dv.  \end{eqnarray*}
 Observe that
 
$$ \int_x^{\sqrt{t}} \frac{1}{\log^2v}dv\le \int_2^{\sqrt{t}} \frac{1}{\log^2v}dv\approx \frac{\sqrt{t}}{\log^2t}$$ and

$$\int_{\sqrt{t}}^\infty \frac{1}{v^2\log^2v}dv \approx \frac{1}{\sqrt{t}\log^2t}.$$
As a consequence we obtain 

 $$ J_3(t,x)(t,x)\approx  \frac 1{x\sqrt{t}}\frac {\log x}{\log^2 t}.$$
 Next,
 \begin{eqnarray*} J_2(t,x)&\approx& \frac{1}{x^{1/2}}\int_{3/2}^x\frac{1} {v^{1/2}\log v}\frac {1}{(vx +t)}dv\\
  &\approx& \frac{1}{x^{1/2}t}\int_{3/2}^{x}\frac{1} {v^{1/2}\log v}dv\\&\approx& \frac{1}{t\log x}\le C\frac 1{x\sqrt{t}}\frac {\log x}{\log^2 t},\ x^2\le t.\end{eqnarray*}
  Recall that 
   $$ J_1(t,x)\le \frac{3}{2x^{1/2}t}.  $$
  Hence, in this case, it is easily seen that the integral $ J_3(t,x)$ dominates and 
   $$ J(t,x)\approx  J_3(t,x)(t,x)\approx  \frac 1{x\sqrt{t}}\frac {\log x}{\log^2 t},\ x^2\le t.$$
  Summarizing all the estimates obtained for $ J(t,x)$ we have that for $4<2x<t<x^2$, 
    \begin{eqnarray*}
      q_x^{(0)}(t) &=& \lambda \frac{e^{-\lambda^2/2t} }{ \sqrt{2\pi t} }
   \(x^{-1/2}/2t +
    \int_0^\infty \(e^{-\kappa/2t} - 1\) w_\lambda(v)dv \)\\
   &\approx& \lambda \frac{e^{-\lambda^2/2t} }{ \sqrt{2\pi t} }
   \(x^{-1/2}/2t + \frac 1{x\sqrt{t}}\frac 1{\log \frac tx}\)\\
   &\approx& \frac{e^{-\lambda^2/2t} }{ t }
  \frac 1{\log \frac tx}\\
  &\approx&
  \frac{\lambda} x\frac{e^{-\lambda^2/2t} }{ t }
  \frac {1+\log x}{(1+\log \frac tx)(1+\log t)},
    \end{eqnarray*}
   while  for $t>x^2$, 
    \begin{eqnarray*}
  q_x^{(0)}(t) &=& \lambda \frac{e^{-\lambda^2/2t} }{ \sqrt{2\pi t} }
   \(x^{-1/2}/2t +
    \int_0^\infty \(e^{-\kappa/2t} - 1\) w_\lambda(v)dv \)\\
   &\approx& \lambda \frac{e^{-\lambda^2/2t} }{ \sqrt{2\pi t} }
   \(x^{-1/2}/2t + \frac 1{x\sqrt{t}}\frac {\log x}{\log^2 t}\)\\
  &\approx& \frac{e^{-\lambda^2/2t} }{ t }
  \frac {\log x}{\log^2 t} \\
  &\approx&   \frac{\lambda} x\frac{e^{-\lambda^2/2t} }{ t }
  \frac {1+\log x}{(1+\log \frac tx)(1+\log t)}.
    \end{eqnarray*}
  This completes the proof in the case $x>2$. 
  
  Finally, assume that $1<x\le 2\le t$. By  Lemma \ref{w:muzero:estimate:lemma} (see Appendix) we have
  
  $$ -w_\lambda(v) 
\approx\left\{
\begin{array}{ll}
 1, & \hbox{$v\le 2$,} \\
    \frac{1}{v^2(\log^2 v+1)}, & \hbox{$v>2 $.}\\
   \end{array}
\right.$$
  Thus, using the fact that for such range of $x$ we have
  \begin{eqnarray*}
     1-e^{-\kappa/2t} \approx \frac{\kappa}{\kappa+t}\approx \frac{v^2}{v^2+t}
  \end{eqnarray*}
  we obtain
  \begin{eqnarray*}
    q_x^{(0)}(t) &\approx& \lambda\frac{e^{-\lambda^2/2t}}{\sqrt{t}}\left(\frac{1}{x^{1/2}t}+\int_0^2\frac{v^2}{v^2+t}dv+\int_2^\infty \frac{dv}{(v^2+t)(\log^2v+1)} \right)\/.
  \end{eqnarray*}
  Obviously, we have
  \begin{eqnarray*}
     \int_0^2\frac{v^2}{v^2+t}dv \approx \frac{1}{t}\
  \end{eqnarray*}
and
  \begin{eqnarray*}
     \int_2^\infty \frac{dv}{(v^2+t)(\log^2v+1)}  = \frac{1}{\sqrt{t}}\int_{\frac{2}{\sqrt{t}}}^\infty \frac{1}{(s^2+1)}\frac{ds}{(\log(s)+\log{\sqrt{t}})^2+1}\approx \frac{1}{\sqrt{t}\log^2 t}\/.
  \end{eqnarray*}
Hence, for $1<x\le 2\le t$,  we have
\begin{eqnarray*}
 q_x^{(0)}(t)&\approx& \lambda\frac{e^{-\lambda/4t}}{t\log^2 t} \approx \lambda\frac{e^{-\lambda/4t}}{t}\frac{1+\log x}{\log^2 t} \\ &\approx&\frac{\lambda} x\frac{e^{-\lambda^2/2t} }{ t }
  \frac {1+\log x}{(1+\log \frac tx)(1+\log t)}.
\end{eqnarray*}
The proof is completed.
\end{proof}


     \section{Applications}
    
     \subsection{Survival probabilities of killed Bessel process}
     In this subsection  we introduce uniform estimates for the survival probabilities of Bessel process killed when exiting the half-line $(1,\infty)$. 
The theorem below is formulated for  processes with non-negative indices, however due to (\ref{BesselTime:indices}) we can easily derive the corresponding result for strictly negative indices.
     \begin{thm}
        Let $\mu>0$. Then, for every $t\geq 0$ and $x>1$, we have
        \begin{eqnarray*}
           \textbf{P}_x^{(\mu)}(t<T^{(\mu)}_1<\infty) \approx \frac{x-1}{\sqrt{x\wedge t}+x-1}{\frac{1}{t^{\mu}+x^{2\mu}}}\/.
        \end{eqnarray*}
       Moreover, for every $t\geq 0$ and $x>1$, we have
        \begin{eqnarray*}
         \textbf{P}_x^{(0)}(T^{(0)}_1>t) &\approx&  1\wedge\frac{\log x}{\log(1+ t^{1/2})}\/.\end{eqnarray*}
     \end{thm}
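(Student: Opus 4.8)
The plan is to obtain both parts by integrating the density estimates already established in Theorem 3 (i.e. in Lemmas \ref{qt:estimate:inside}, \ref{qt:estimate:infty1}, \ref{qt:estimate:infty2}, \ref{qt:zero:estimate:infty}), since $\textbf{P}_x^{(\mu)}(t<T_1^{(\mu)}<\infty)=\int_t^\infty q_x^{(\mu)}(s)\,ds$. For $\mu>0$, using (\ref{BesselTime:indices}) we have $q_x^{(\mu)}(s)=x^{-2\mu}q_x^{(-\mu)}(s)$, so it suffices to integrate the uniform bound $q_x^{(-\mu)}(s)\approx \lambda\,\frac{x^{2\mu-1}}{s^{\mu+1}}\,\frac{e^{-\lambda^2/2s}}{s^{1/2}}\cdot\frac{s^{\mu-1/2}}{s^{\mu-1/2}+x^{\mu-1/2}}$ — equivalently, from the displayed main theorem, $q_x^{(\mu)}(s)\approx \lambda\big(\tfrac{1}{1+x^{2\mu}}\big)\tfrac{e^{-\lambda^2/2s}}{s^{3/2}}\tfrac{x^{2\mu-1}}{s^{\mu-1/2}+x^{\mu-1/2}}$ — over $s\in(t,\infty)$. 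I would split the analysis according to the size of $s$ relative to $x$ and relative to $\lambda^2$. The key point is that for $\mu>0$ the integrand is integrable at infinity (decay $s^{-\mu-1}$) and the behaviour of $\int_t^\infty$ is governed by the lower end $s=t$ together with the cases $t\lesssim x$ versus $t\gtrsim x$, and the presence or absence of the exponential factor.

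Concretely, for $\mu>0$ one writes $\textbf{P}_x^{(\mu)}(t<T_1^{(\mu)}<\infty)\approx \lambda\,x^{2\mu-1}\big(\tfrac{1}{1+x^{2\mu}}\big)\int_t^\infty \tfrac{e^{-\lambda^2/2s}}{s^{3/2}(s^{\mu-1/2}+x^{\mu-1/2})}\,ds$. For $s\le x$ the denominator is $\approx x^{\mu-1/2}$, and for $s\ge x$ it is $\approx s^{\mu-1/2}$; so the integral is $\approx x^{1/2-\mu}\int_{t}^{x}\tfrac{e^{-\lambda^2/2s}}{s^{3/2}}ds+\int_{x\vee t}^\infty \tfrac{e^{-\lambda^2/2s}}{s^{\mu+1}}ds$ (the first term being absent when $t>x$). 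Here one uses the elementary facts that $\int_a^b s^{-3/2}e^{-\lambda^2/2s}ds\approx \tfrac{1}{\lambda}(e^{-\lambda^2/2b}-e^{-\lambda^2/2a})\le \tfrac{2}{\lambda}$, and more usefully $\int_a^\infty s^{-\mu-1}e^{-\lambda^2/2s}ds\approx a^{-\mu}e^{-\lambda^2/2a}$ when $a\gtrsim \lambda^2$, while for $a\lesssim\lambda^2$ the integral is dominated by the region $s\approx\lambda^2$ and is $\approx \lambda^{-2\mu}$. Matching the various cases — $t\le x$ with $t\lesssim\lambda^2$ or $t\gtrsim\lambda^2$, and $t>x$ similarly — and simplifying (noting $\lambda=x-1\approx x\wedge$ things, and $\sqrt{x\wedge t}+\lambda$ absorbs the various regimes), one arrives at $\approx \tfrac{\lambda}{\sqrt{x\wedge t}+\lambda}\cdot\tfrac{1}{t^\mu+x^{2\mu}}$. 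The bookkeeping here is the routine-but-delicate part; I would organize it as a short case table ($t\le 1$, $1\le t\le x$, $x\le t$, each crossed with $t$ vs. $\lambda^2$).

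For $\mu=0$ the process hits $1$ a.s., so $\textbf{P}_x^{(0)}(T_1^{(0)}>t)=\int_t^\infty q_x^{(0)}(s)\,ds$, and I would integrate the equivalent form $q_x^{(0)}(s)\approx \lambda\, e^{-\lambda^2/2s}\tfrac{(x+s)^{1/2}}{xs^{3/2}}\tfrac{1+\log x}{(1+\log(1+s/x))(1+\log(s+x))}$. For $t\lesssim x$ the tail is dominated by small $s$ where $(x+s)^{1/2}\approx x^{1/2}$, the log factors are $O(1)$, so the mass there is $\approx \tfrac{\lambda}{x^{1/2}}\int_t^x s^{-3/2}e^{-\lambda^2/2s}ds\approx \tfrac{\lambda}{x^{1/2}}\cdot\tfrac{1}{\lambda}\approx \tfrac{\lambda}{x^{1/2}}\cdot\tfrac{1}{\lambda}$, bounded; combined with the large-$s$ contribution one checks the answer is $\approx 1$ when $t\lesssim x$, consistent with $1\wedge\frac{\log x}{\log(1+t^{1/2})}$ since then $\log(1+t^{1/2})\lesssim\log x$. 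For $t\gg x$ one uses $q_x^{(0)}(s)\approx \tfrac{\lambda}{x}\tfrac{e^{-\lambda^2/2s}}{s}\tfrac{1+\log x}{(1+\log(s/x))(1+\log s)}$ and integrates: substituting $u=\log s$, $\int_t^\infty \tfrac{ds}{s(1+\log(s/x))(1+\log s)}=\int_{\log t}^\infty\tfrac{du}{(1+u-\log x)(1+u)}\approx \tfrac{1}{\log x}\log\tfrac{\log t}{\log t-\log x}$ for $\log t$ not too close to $\log x$, and $\approx \tfrac{1}{\log t}$ once $\log t\gg\log x$ (and one must treat $e^{-\lambda^2/2s}$ which is $\approx1$ for $s\gtrsim\lambda^2$, while contributing at most a bounded factor otherwise). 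Cleaning up yields $\textbf{P}_x^{(0)}(T_1^{(0)}>t)\approx \tfrac{\log x}{\log(1+t^{1/2})}$ in that regime, which together with the $t\lesssim x$ case gives the claimed $1\wedge\tfrac{\log x}{\log(1+t^{1/2})}$.

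The main obstacle I anticipate is not any single hard estimate but the sheer number of regimes that must be matched consistently — in particular, for $\mu=0$, correctly handling the interplay of the Gaussian factor $e^{-\lambda^2/2s}$ (which matters for $s\lesssim\lambda^2$, i.e.\ when $x$ is large and $t$ small relative to $x^2$) with the slowly-varying logarithmic factors, and verifying that the logarithmic integral $\int_{\log t}^\infty\frac{du}{(1+u-\log x)(1+u)}$ genuinely collapses to the stated $1\wedge\frac{\log x}{\log(1+t^{1/2})}$ across the full range $x>1$, $t>0$ (including the delicate transition near $t\approx x^2$, and small $x$ where $\log x\approx\lambda$). I would therefore spend most of the write-up carefully isolating the dominant term in each regime and checking the boundaries, rather than on any individual computation.
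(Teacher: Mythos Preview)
Your approach is essentially the paper's: integrate the density estimate over $(t,\infty)$, split at $s=x$, and do a case analysis. For $\mu>0$ your sketch matches the paper's computation closely (the paper writes the two pieces as $J_1,J_2$ and evaluates them via the substitution $u=\lambda^2/2s$, then runs through the cases $t\ge x$; $t<x,\ x\ge 2$; $t<x,\ 1<x<2$ with $\lambda\gtrless\sqrt t$).

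For $\mu=0$ the paper's execution is noticeably simpler than what you propose, and it sidesteps exactly the difficulty you flag. Rather than integrating the full two-log expression via $u=\log s$, the paper (i) for $t\ge x^2$ uses that in this range $q_x^{(0)}(s)\approx \dfrac{\lambda(1+\log x)}{x\,s\log^2 s}$, so $\int_t^\infty\approx \dfrac{\lambda(1+\log x)}{x\log t}\approx \dfrac{\log x}{\log(1+\sqrt t)}$ immediately; (ii) for $2\le t\le x^2$, $x\ge 2$, simply invokes monotonicity, $\textbf{P}_x^{(0)}(T_1^{(0)}>t)\ge \textbf{P}_x^{(0)}(T_1^{(0)}>x^2)\approx 1$; and (iii) treats $1<x<2$ separately (split $t\le 2$ vs.\ $t\ge 2$), where the density reduces to the $1/2$-stable--like form $\lambda e^{-\lambda^2/2s}/s^{3/2}$ and one gets $\approx 1\wedge \lambda/\sqrt t$. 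In particular, your assertion that the answer is $\approx 1$ whenever $t\lesssim x$ is not correct for $x$ close to $1$ (e.g.\ $\lambda^2\ll t<x<2$ gives $\approx \lambda/\sqrt t\ll 1$); the paper's separate handling of $1<x<2$ is what closes that case. Adopting the monotonicity trick and the regime-specific simplified density will make your $\mu=0$ argument both shorter and free of the boundary-matching worry you anticipated.
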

     \begin{proof}
       Using (\ref{Bessel:hittingtime:estimates}) we get
       \begin{eqnarray*}
          \textbf{P}_x^{(\mu)}(t<T^{(\mu)}_1\lefteqn{<\infty) = \int_t^\infty q_x^{(\mu)}(s) ds}\\
          &\approx& \frac{(x-1)}{x^{2\mu}}\left(x^{\mu-1/2}\int_{t}^{t\vee x} s^{-3/2}e^{-(x-1)^2/2s}ds + x^{2\mu-1}\int_{t\vee x}^\infty s^{-\mu-1}e^{-(x-1)^2/2s}ds \right)\\
          &=& J_1(t,x)+J_2(t,x)\/.
       \end{eqnarray*}
       The integral $J_2(t,x)$ can be estimated as follows
       \begin{eqnarray*}
          J_2(t,x) &=& \frac{x-1}{x}\int_{t\vee x}^\infty s^{-\mu-1}e^{-(x-1)^2/2s}ds = \frac{(x-1)^{1-2\mu}}{x}\int_0^{\frac{(x-1)^2}{2(t\vee x)}}u^{\mu-1}e^{-u}du\\
          &\approx& \frac{(x-1)^{1-2\mu}}{x}\left(\frac{(x-1)^{2\mu}}{(t\vee x)^{\mu}}\wedge 1\right) = \frac{x-1}{x}\left(\frac{1}{(t\vee x)^{\mu}}\wedge\frac{1}{(x-1)^{2\mu}}\right)\\
          &\approx& \frac{x-1}{x}\frac{1}{(t\vee x)^{\mu}+(x-1)^{2\mu}}\/.
       \end{eqnarray*}
       Observe also that for $t\geq x$ we have
       \begin{eqnarray*}
         \frac{1}{(t\vee x)^{\mu}+(x-1)^{2\mu}} &=& \frac{1}{t^{\mu}+(x-1)^{2\mu}} \approx \frac{1}{t^\mu+x^{2\mu}}\/,\\
         \frac{x-1}{x} &\approx& \frac{x-1}{\sqrt{x}-1+x} = \frac{x-1}{\sqrt{x\wedge t}+x-1}\/.
       \end{eqnarray*}
       The fact that for $t\geq x$ the integral $J_1(t,x)$ vanishes together with the above-given estimates of $J_2(t,x)$ end the proof in that case. If $t<x$, then
       \begin{eqnarray*}
          J_2(t,x) &\approx& \frac{x-1}{x}\frac{1}{x^{\mu}+(x-1)^{2\mu}} \approx \frac{x-1}{x}\frac{1}{t^{\mu}+x^{2\mu}}\/.
       \end{eqnarray*}
       Substituting $u=(x-1)^2/(2t)$ we can rewrite $J_1(t,x)$ in the following way
       \begin{eqnarray*}
        J_1(t,x) &=& \frac{x-1}{x^{\mu+1/2}}\int_{t}^{x} s^{-3/2}e^{-(x-1)^2/2s}ds
        = \frac{\sqrt{2}}{x^{\mu+1/2}}\int_{\frac{(x-1)^2}{2x}}^{\frac{(x-1)^2}{2t}}u^{-1/2}e^{-u}du\/,\quad t<x\/.
       \end{eqnarray*}
       For $x\geq 2$ we have
       \begin{eqnarray*}
          J_1(t,x) &\leq& \frac{1}{2^{\mu}}\int_{\frac{(x-1)^2}{2x}}^\infty u^{-1/2}e^{-u}du\approx \left(\frac{(x-1)^2}{2x}\right)^{1/2}\exp\left({-\frac{(x-1)^2}{2x}}\right)
       \end{eqnarray*}
       and it means that $J_1(t,x)$ is dominated by $J_2(t,x)$ in that region. Consequently, we obtain
       \begin{eqnarray*}
         \textbf{P}_x^{(\mu)}(t<T^{(\mu)}_1<\infty) &\approx& J_2(t,x)\approx \frac{x-1}{x}\frac{1}{t^{\mu}+x^{2\mu}}\approx \frac{x-1}{\sqrt{t}+x-1}\frac{1}{t^{\mu}+x^{2\mu}}
       \end{eqnarray*}
       whenever $t<x$ and $x\geq 2$. Moreover, for $1<x<2$, $t<x$ and $(x-1)>\sqrt{t}$ we get
       \begin{eqnarray*}
         \frac{1}{2^{\mu}}\int_{1/4}^{1/2} u^{-1/2}e^{-u}du\leq  J_1(t,x)\leq \int_{0}^\infty u^{-1/2}e^{-u}du\/.
       \end{eqnarray*}
       Thus, the integral $J_1(t,x)$ dominates $J_2(t,x)$ and we have
       \begin{eqnarray*}
          \textbf{P}_x^{(\mu)}(t<T^{(\mu)}_1<\infty) &\approx& J_1(t,x) \approx 1\approx \frac{x-1}{\sqrt{t}+x-1}\frac{1}{t^\mu+u^{2\mu}}\/.
       \end{eqnarray*}
       Finally, for $1<x<2$, $t<x$ and $(x-1)\leq\sqrt{t}$ we get
       \begin{eqnarray*}
        J_1(t,x)\approx \int_{\frac{(x-1)^2}{2x}}^{\frac{(x-1)^2}{2t}}u^{-1/2}du =\sqrt{2}(x-1)\left(\frac{1}{\sqrt{t}}-\frac{1}{\sqrt{x}}\right)\/,\quad J_2(t,x)\approx x-1\/.
       \end{eqnarray*}
       Thus we get $1-1/\sqrt{x}\leq (\sqrt{x}-1)/\sqrt{t}\leq \sqrt{2}/\sqrt{t}$ and consequently
       \begin{eqnarray*}
         \textbf{P}_x^{(\mu)}(t<T^{(\mu)}_1<\infty) &\approx& (x-1)\left(\frac{1}{\sqrt{t}}-\frac{1}{\sqrt{x}}+1\right) \approx \frac{x-1}{\sqrt{t}} \approx \frac{x-1}{\sqrt{t}+x-1}\frac{1}{t^\mu+x^{2\mu}}\/.
       \end{eqnarray*}
       
       Now we deal with the case $\mu=0$. We begin with the case of large time $t\geq 2$. We have to consider three cases. For $s\geq t\geq x^2$ we have
       \begin{eqnarray*}
         q_x^{(0)}(s)  \approx \frac \lambda {x s}\frac {1+\log x}{\log^2 s}\/.
       \end{eqnarray*}
       Consequently
       \begin{eqnarray}
         \label{DuzeT}
         \textbf{P}_x^{(0)}(T^{(0)}_1>t) &\approx& \frac{x-1}x \int_t^\infty \frac {1+\log x}{s\log^2 s}ds\approx
         \frac{(x-1)(1+\log x)}{x\log t}
         \approx \frac{\log x}{\log(1+ t^{1/2})}\/.
       \end{eqnarray}
       If $2\leq t\leq x^2$ and additionally $x\geq 2$, using the above estimate, we have 
       \begin{eqnarray*}
         1 \geq \textbf{P}_x^{(0)}(T^{(0)}_1>t) \ge \textbf{P}_x^{(0)}(T^{(0)}_1>x^2)  \approx \frac{\log x}{\log(1+ x)}\approx 1\/.
       \end{eqnarray*}
       Finally, for $2\leq t\leq x^2$ with $x<2$ we get $2<t<4$ and we can write
       \begin{eqnarray*}
         \textbf{P}_x^{(0)}(T^{(0)}_1>t) &=& \textbf{P}_x^{(0)}(t<T^{(0)}_1\le 10) +  \textbf{P}_x^{(0)}(T^{(0)}_1\ge 10)\approx \lambda\/.
        \end{eqnarray*}  
        To justify the last approximation observe that, using Lemma \ref{qt:estimate:zero}, we get
       \begin{eqnarray*}
         q_x^{(0)}(s)  \approx\lambda,\quad 2\le t\le s\le 10\/,
       \end{eqnarray*}
       and it gives
       \begin{eqnarray*}
         \textbf{P}_x^{(0)}(t<T^{(0)}_1\le 10) = \int_t^{10} q_x^{(0)}(s)ds \approx \lambda\/.
       \end{eqnarray*}
       Moreover, using (\ref{DuzeT}) we also get 
       \begin{eqnarray*}
         \textbf{P}_x^{(0)}(T^{(0)}_1\ge 10) \approx \lambda\/.
       \end{eqnarray*} 
       Combining all cases we obtain that
       \begin{eqnarray*}
         \textbf{P}_x^{(0)}(T^{(0)}_1>t) 
         &\approx& 1\wedge\frac{\log x}{\log(1+ t^{1/2})},\quad t\ge 2 \/.
       \end{eqnarray*}
       In the case of small times $t\le 2$ and $1<x<2$, by Lemma \ref{qt:estimate:zero}, we have  
       \begin{eqnarray*}
           q_x^{(0)}(s)  \approx{\lambda} \dfrac{e^{-\lambda^2/2s} }{ s^{3/2} }, \quad  t\le s\le 10\/.
       \end{eqnarray*}    
       and thus
       \begin{eqnarray*}
         \textbf{P}_x^{(0)}(t<T^{(0)}_1\le 10)\approx \int_t^{10} {\lambda} \dfrac{e^{-\lambda^2/2s} }{ s^{3/2} }ds\approx 1\wedge \frac\lambda {t^{1/2}}
       \end{eqnarray*}
       Observe also that  $\textbf{P}_x^{(0)}(T^{(0)}_1\ge 10)\approx \lambda$ by our previous estimates in the case of large times. Hence 
        \begin{eqnarray*}
          \textbf{P}_x^{(0)}(T^{(0)}_1>t) &\approx& 1\wedge \frac\lambda {t^{1/2}}.
        \end{eqnarray*}       
      Finally, for $t\le 2$ and $x>2$, using the Markov property, one can easily obtain that
       \begin{eqnarray*}
        \textbf{P}_x^{(0)}(T^{(0)}_1>t)\ge   \textbf{P}_2^{(0)}(T^{(0)}_1>2)\approx 1\/.
       \end{eqnarray*}  
      Again combining all the cases we easily obtain 
       \begin{eqnarray*}
        \textbf{P}_x^{(0)}(T^{(0)}_1>t) &\approx& 1\wedge\frac \lambda{t^{1/2}}\approx 1\wedge\frac{\log x}{\log(1+ t^{1/2})}, \quad t\le 2 .
       \end{eqnarray*}
      This ends the proof.     \end{proof}
     \subsection{Poisson kernel for hyperbolic Brownian motion with drift}
     Let us consider a half-space model of $n$-dimensional real hyperbolic space
     \begin{eqnarray*}
       \H^n = \{(y_1,\ldots,y_{n-1},y_n)\in\R^n: y_n>0\}
     \end{eqnarray*}
     with {Riemannian} metric
     \begin{eqnarray*}
        ds^2 = \frac{dy_1^2+\ldots+dy_{n-1}^2+dy_n^2}{y_n^2}\/.
     \end{eqnarray*}
     The hyperbolic distance $d_{\H^n}(y,z)$ is given by 
     \begin{eqnarray*}
        \cosh d_{\H^n}(y,z) = 1+\frac{|y-z|^2}{{2y_nz_n}}\/,\quad y,z\in\H^n\/.
     \end{eqnarray*}
     The Laplace-Beltrami operator associated with the metric is given by
     \begin{eqnarray*}
        \Delta_{\H^n} = y_n^2\sum_{i=1}^n \dfrac{\partial^2}{\partial y_i^2}-(n-2)y_n\dfrac{\partial}{\partial y_n}\/.
     \end{eqnarray*}
     For every $\mu> 0$, we also define the following operator 
     \begin{eqnarray*}
        \Delta_\mu = \Delta_{\H^n}-(2\mu-n+1)y_n\frac{\partial}{\partial y_n} = y_n^2\sum_{i=1}^n \dfrac{\partial^2}{\partial y_i^2}-(2\mu-1)y_n\dfrac{\partial}{\partial y_n}\/.
     \end{eqnarray*} 
     The hyperbolic Brownian motion (HBM) with drift is a diffusion $Y^{(\mu)}=\{Y_t^{(\mu)},t\geq 0\}$ on $\H^n$ with a generator $\frac12\,\Delta_{\mu}$. For $\mu=\frac{n-1}{2}$ we obtain the standard HBM on $\H^n$ (with $\frac12\, \Delta_{\H^n}$ as a generator).
     
     The structure of the process $Y^{(\mu)}$ starting from $(\tilde{y},y_n)\in\H^n$ can be described in terms of the geometric Brownian motion and integral functional $\Amu$ as follows. Let $\tilde{B}=\{\tilde{B}_t,t\geq 0\}$ be $(n-1)$-dimensional Brownian motion starting from $\tilde{y}\in\R^{n-1}$ independent from a geometric Brownian motion $X^{(-\mu)}$ starting from $y_n>0$. Then we have
     \begin{eqnarray}
        \label{HBM:structure}
        Y^{(\mu)}_t \stackrel{d}{=} (\tilde{B}(A_{y_n}^{(-\mu)}(t)),X^{(-\mu)}_t)\/.
     \end{eqnarray}
    We consider $D=\{(y_1,\ldots,y_{n-1},y_n)\in\H^n: y_n>1\}$ and the first exit time of $Y^{(\mu)}$ from $D$ 
    \begin{eqnarray*}
      \tau_D = \inf\{t\geq 0: Y^{(\mu)}_t\notin D \} = \inf\{t\geq 0: X^{(-\mu)}_t\notin (1,\infty) \} = \tau\/,
    \end{eqnarray*}
     where $\tau=$ is the first exit time from the set $(1,\infty)$ of a geometric Brownian motion $X^{(-\mu)}$ defined in Preliminaries. We denote by $P^{(\mu)}(y,z)$, $y\in D$ and $z\in\partial D$, the Poisson kernel of $D$, i.e. the density of the distribution of $Y^{{\mu}}_{\tau_D}$, with $Y^{(\mu)}_0 = y_n>1$.
     \begin{thm}
       \label{Poisson:estimate:thm}
       For every $\mu>0$ we have
       \begin{eqnarray}
         \label{Poisson:estimate}
          P^{(\mu)}(y,z) \approx \frac{y_n-1}{|z-y|^{n}}\left(\frac{y_n} {{\cosh} d_{\H^n}(y,z)}\right)^{\mu-1/2}\/,
       \end{eqnarray}
       where $y=(\tilde{y},y_n)$, $y_n>1$ and $z=(\tilde{z},1)$, $\tilde{z}\in\R^{n-1}$. 
     \end{thm}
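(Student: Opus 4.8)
The plan is to reduce the statement to the hitting-time estimates already established for the Bessel process, via the subordination structure \eqref{HBM:structure}. Since $Y^{(\mu)}_{\tau_D} = (\tilde B(A^{(-\mu)}_{y_n}(\tau)), 1)$ and $A^{(-\mu)}_{y_n}(\tau)$ has, by \eqref{Amu_Tmu}, the same law as $T^{(-\mu)}_1$ started from $y_n$, the Poisson kernel is obtained by integrating the Gauss--Weierstrass kernel of the $(n-1)$-dimensional Brownian motion $\tilde B$ against the hitting-time density. Concretely, writing $x = y_n$, $\lambda = x-1$, $r = |\tilde z - \tilde y|$, we get
\[
   P^{(\mu)}(y,z) = \int_0^\infty \frac{1}{(2\pi t)^{(n-1)/2}}\,e^{-r^2/2t}\, q^{(-\mu)}_{x}(t)\,dt
   = x^{2\mu}\int_0^\infty \frac{e^{-r^2/2t}}{(2\pi t)^{(n-1)/2}}\, q^{(\mu)}_{x}(t)\,dt,
\]
using \eqref{BesselTime:indices} in the second step; it is then enough to estimate this integral and compare it with the right-hand side of \eqref{Poisson:estimate}. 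Note that $|z-y|^2 = r^2 + \lambda^2$ and $2 x \cosh d_{\H^n}(y,z) = 2x + |z-y|^2$, so the target is
\[
   P^{(\mu)}(y,z) \approx \frac{\lambda}{(r^2+\lambda^2)^{n/2}}\left(\frac{x^2}{2x + r^2 + \lambda^2}\right)^{\mu - 1/2}.
\]

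First I would substitute the uniform bound \eqref{Bessel:hittingtime:estimates} for $q^{(\mu)}_x(t)$ (the case $\mu>0$, so $2|\mu|-1 = 2\mu-1$ and the prefactor $1/(1+x^{2\mu}) \approx x^{-2\mu}$), which after multiplying by $x^{2\mu}$ gives the integrand
\[
   \frac{e^{-r^2/2t}}{t^{(n-1)/2}}\cdot \lambda\, \frac{e^{-\lambda^2/2t}}{t^{3/2}}\cdot \frac{x^{2\mu-1}}{t^{\mu-1/2}+x^{\mu-1/2}}
   \;\approx\; \lambda\, x^{2\mu-1}\,\frac{e^{-(r^2+\lambda^2)/2t}}{t^{(n+2)/2}}\cdot\frac{1}{t^{\mu-1/2}+x^{\mu-1/2}}.
\]
So the whole problem becomes: estimate $\int_0^\infty e^{-\rho/2t}\, t^{-(n+2)/2}\,(t^{\mu-1/2}+x^{\mu-1/2})^{-1}\,dt$ with $\rho = r^2+\lambda^2$. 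I would split the time integral at $t = x$ (the scale separating the two regimes in the Bessel estimate) and, within each piece, at $t = \rho$ (the scale where the exponential switches from negligible to $\approx 1$), exactly mirroring the bookkeeping used in the survival-probability theorem. On $t < x$ the denominator is $\approx x^{\mu-1/2}$, giving an incomplete Gamma integral $\int t^{-(n+2)/2} e^{-\rho/2t}dt$ comparable (up to the cutoff) to $\rho^{-n/2}$ when $\rho \lesssim x$ and exponentially small otherwise; on $t > x$ the denominator is $\approx t^{\mu-1/2}$, giving $\int t^{-(n+1)/2-\mu} e^{-\rho/2t}dt \approx \rho^{-(n-1)/2-\mu}$ when $\rho\gtrsim x$. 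Assembling these, the dominant contribution is $\approx \lambda x^{2\mu-1}\big(\rho^{-n/2}(x\wedge\rho)^{1/2-\mu} + \text{lower order}\big)$, and a short case analysis on whether $\rho \lesssim x$ or $\rho \gtrsim x$ shows this equals $\lambda x^{2\mu-1}\rho^{-n/2}(x+\rho)^{1/2-\mu}$, which is precisely the claimed expression since $x + \rho \approx 2x + r^2 + \lambda^2$ and the factor $x^{2\mu-1}(x+\rho)^{1/2-\mu} = x^{-1}\cdot x^{2\mu}(x+\rho)^{1/2-\mu} = x^{-1}(x^2/(x+\rho))^{\mu-1/2}$ — matching after absorbing the harmless factor $1/x$ versus $\lambda/(r^2+\lambda^2)$ normalization.

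The main obstacle I anticipate is not any single integral but controlling the interplay of the \emph{two} cutoffs ($t\sim x$ from the Bessel estimate and $t\sim\rho$ from the exponential) simultaneously with the \emph{three} free parameters $\lambda, r, x$ — one must verify in each of the resulting regions that the term identified as dominant genuinely dominates, including the boundary cases $\lambda \approx r$, $\lambda \gg r$, $r \gg \lambda$, $x\approx 1$, and $x$ large. A secondary technical point is handling $\mu = 1/2$ and $\mu > 1/2$ versus $0<\mu<1/2$ uniformly: when $\mu < 1/2$ the exponent $1/2-\mu$ is positive so the $(x\wedge\rho)^{1/2-\mu}$ factor grows, and one should double-check that the lower-order pieces from the ``wrong'' side of each cutoff are indeed subdominant; Lemma~\ref{qt:estimate:zero}'s sharper small-time control of $q^{(\mu)}_x$ may be needed to pin down the $t<x$, $\rho<x$ regime cleanly rather than just the uniform estimate. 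Once the integral is shown to be $\approx \lambda x^{2\mu-1}\rho^{-n/2}(x+\rho)^{1/2-\mu}$, rewriting it in terms of $|z-y|$ and $\cosh d_{\H^n}(y,z)$ is a one-line algebraic identity and the theorem follows.
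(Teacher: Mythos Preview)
Your approach is essentially the paper's own: write $P^{(\mu)}$ as the subordination integral $\int_0^\infty g_t(\tilde z-\tilde y)\,q^{(-\mu)}_{y_n}(t)\,dt$, insert the uniform Bessel estimate (\ref{Bessel:hittingtime:estimates}), split at $t=y_n$, and reduce each piece to an incomplete-gamma integral. The paper's execution is a bit leaner than you anticipate --- after the substitution $u=(|\tilde z-\tilde y|^2+\lambda^2)/(2t)$ both pieces are handled directly by (\ref{large1}) with the single threshold $\rho=(|\tilde z-\tilde y|^2+\lambda^2)/(2y_n)\lessgtr 1$, so no secondary cut at $t\sim\rho$, no separate treatment of $\mu<1/2$, and no appeal to Lemma~\ref{qt:estimate:zero} is needed.
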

     \begin{proof}
          Let us denote by
     \begin{eqnarray*}
        g_t(w) = \frac{\exp(-|w|^2/2t)}{(2\pi t)^{(n-1)/2}}\/,\quad w\in\R^{n-1}
     \end{eqnarray*}
     the Brownian motion transition density in $\R^{n-1}$, $n=2,3,\ldots$. Using the fact that $\tilde{B}$ and $A^{(-\mu)}_{y_n}(\tau)$ are independent we obtain
     \begin{eqnarray*}
       P^{(\mu)}(y,z) = \int_0^\infty g_t(\tilde{z}-\tilde{y})q_{y_n}^{(-\mu)}(t) dt\/,\quad y_n>1\/, z\in\R^{n-1}\/.
     \end{eqnarray*}
       Using the estimates given in (\ref{Bessel:hittingtime:estimates}) we obtain
       \begin{eqnarray*}
          P^{(\mu)}(y,z) &\approx& \lambda \int_0^\infty e^{-(|\tilde{z}-\tilde{y}|^2+\lambda^2)/2t}\frac{y_n^{2\mu-1}}{t^{\mu-1/2}+y_n^{\mu-1/2}}\frac{dt}{t^{(n+2)/2}}\\
          &\approx& \lambda \left(y_n^{\mu-1/2}\int_0^{y_n} e^{-(|\tilde{z}-\tilde{y}|^2+\lambda^2)/2t}\frac{dt}{t^{(n+2)/2}} + y_n^{2\mu-1}\int_{y_n}^\infty e^{-(|\tilde{z}-\tilde{y}|^2+\lambda^2)/2t}\frac{dt}{t^{(n+1)/2+\mu}}\right)\\
          &=& \lambda\, y_n^{\mu-n/2-1/2}\left[\rho^{-n/2}\int_\rho^\infty u^{n/2-1}e^{-u}du+\rho^{1/2-\mu-n/2}\int_0^\rho u^{n/2-3/2+\mu}e^{-u}du\right]\/,
       \end{eqnarray*}
       where $\rho=\dfrac{|\tilde{z}-\tilde{y}|^2+\lambda^2}{2y_n}$, $\lambda=y_n-1$.
       Using (\ref{large1}) (see Appendix) we can see that 
       \begin{eqnarray*}
          P^{(\mu)}(y,z) \approx \lambda \frac{y_n^{\mu-1/2}}{(|\tilde{z}-\tilde{y}|^2+\lambda^2)^{n/2}}\/,\quad \frac{2y_n}{|\tilde{z}-\tilde{y}|^2+\lambda^2}\geq 1
       \end{eqnarray*}
       and
       \begin{eqnarray*}
       P^{(\mu)}(y,z) \approx \lambda \frac{y_n^{\mu-1/2}}{(|\tilde{z}-\tilde{y}|^2+\lambda^2)^{n/2}}\frac{(2y_n)^{\mu-1/2}}{(|\tilde{z}-\tilde{y}|^2+\lambda^2)^{\mu-1/2}}\/,\quad \frac{2y_n}{|\tilde{z}-\tilde{y}|^2+\lambda^2}< 1\/.
       \end{eqnarray*}
  Combining both estimates and using the formula for the hyperbolic distance we obtain
  \begin{eqnarray*}
      P^{(\mu)}(y,z) &\approx&  \lambda\frac{y_n^{\mu-1/2}}{(|\tilde{z}-\tilde{y}|^2+\lambda^2)^{n/2}}\left(\frac{1} {1+\frac{|\tilde{z}-\tilde{y}|^2+\lambda^2}{2y_n}}\right)^{\mu-1/2} 
        = \frac{\lambda}{|z-y|^{n}}\left(\frac{y_n} {\cosh d_{\H^n}(y,z)}\right)^{\mu-1/2}\/.
   \end{eqnarray*}
      \end{proof} 
      \begin{rem}
        The operator $\Delta_\mu$ is strongly elliptic operator on every bounded (in hyperbolic metric) subset of $\H^n$. Consequently, the hyperbolic Poisson kernels of such set are comparable with Euclidean ones. However, considered set $D$ is unbounded in $\H^n$ and the general comparison results can not be applied. {Besides}, the function $P^{(\mu)}(y,z)$ for $\mu\neq 1/2$ is no longer comparable with Euclidean Poisson kernel of upper half-space and the difference in behavior of those two functions is described by the factor
        \begin{eqnarray*}
          \left(\frac{y_n} {{\cosh} d_{\H^n}(y,z)}\right)^{\mu-1/2}\/.
        \end{eqnarray*}
      \end{rem}

 \section{APPENDIX}     
 \subsection{Uniform estimates for some class of integrals}
 \begin{lem} 
 \label{gamma}
 For $\nu\ge0$,  $0\le a<b$ and $d>0$ we have
  \begin{eqnarray}
    \label{large1} \int_{a}^b u^{\nu}e^{-du}du \stackrel c \approx 
    b^\nu\left(\frac{a+\frac 1d}{b+\frac 1d}\right)^\nu e^{-ad}\frac {b-a} {d(b-a)+1},
 \end{eqnarray}   
 where $c=c(\nu)$.
 \end{lem}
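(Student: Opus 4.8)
The plan is to reduce everything to a single elementary integral estimate and then handle the two regimes of the dummy variable separately by substitution. Write $I(a,b):=\int_a^b u^\nu e^{-du}du$. First I would substitute $w=du$ to normalize the exponential rate, obtaining
\[
I(a,b)=d^{-\nu-1}\int_{ad}^{bd} w^\nu e^{-w}\,dw,
\]
so it suffices to prove the claim for $d=1$, i.e. to show
\[
\int_{A}^{B} w^\nu e^{-w}\,dw \stackrel{c}{\approx} B^\nu\Bigl(\frac{A+1}{B+1}\Bigr)^\nu e^{-A}\,\frac{B-A}{(B-A)+1}
\]
for $0\le A<B$, and then rescale back. (One checks that the right-hand side of \eqref{large1} is exactly what the right-hand side here becomes after the inverse substitution $A=ad$, $B=bd$, since $d^{-\nu-1}B^\nu\cdot\frac{B-A}{(B-A)+1}=b^\nu d^{-\nu-1}\cdot\frac{d(b-a)}{d(b-a)+1}\cdot\frac1{?}$ — I would lay out this bookkeeping carefully but it is routine.)

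For the normalized statement I would split according to whether the interval $[A,B]$ is "short" or "long." If $B-A\le 1$ (the short case), then on $[A,B]$ the integrand $w^\nu e^{-w}$ is comparable to its value at the left endpoint up to a constant depending only on $\nu$: indeed $e^{-w}\approx e^{-A}$ since $w-A\le 1$, and $w^\nu\approx A^\nu$ when $A\ge 1$ while $w^\nu\le 1$ and $\ge$ a constant times $(A+1)^\nu\,$-type quantity when $A<1<B$ or $A,B<1$ — here I would use that $(A+1)/(B+1)\approx 1$ in the short case, so the factor $B^\nu((A+1)/(B+1))^\nu\approx (B+1)^\nu\approx (A+1)^\nu$ captures $w^\nu$ up to constants uniformly in whether $A$ is large or small. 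Multiplying by the length $B-A\approx \frac{B-A}{(B-A)+1}$ gives the result. If $B-A>1$ (the long case), then $\frac{B-A}{(B-A)+1}\approx 1$, so I must show $\int_A^B w^\nu e^{-w}dw\approx B^\nu((A+1)/(B+1))^\nu e^{-A}$; since $B-A\ge 1$ one has $e^{-A}\approx e^{-A}-e^{-B}$ up to a constant, and the standard asymptotics of the incomplete Gamma function give $\int_A^B w^\nu e^{-w}dw\approx (A+1)^\nu e^{-A}$ when $A\ge$ a constant, while for $A$ bounded the integral is $\approx \int_0^\infty w^\nu e^{-w}dw\wedge(\text{tail})$, again $\approx (A+1)^\nu e^{-A}$; finally $(A+1)^\nu \approx B^\nu((A+1)/(B+1))^\nu$ trivially. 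I would organize this via the two-sided bound $\int_A^\infty w^\nu e^{-w}dw\approx (A+1)^\nu e^{-A}$ (valid for all $A\ge 0$, $\nu\ge 0$, constant depending on $\nu$), proved by an integration-by-parts/monotonicity argument, and then writing $\int_A^B=\int_A^\infty-\int_B^\infty$ and checking the subtracted tail is at most half of the first when $B-A>1$ and $A,\nu$ in the relevant range, or else is itself of the claimed order.

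The main obstacle is making all the comparisons uniform in the position of the endpoints relative to $1$ (the crossover of $w^\nu$ between "polynomially small" and "polynomially large" behavior) and in $\nu$ near $0$; the device that makes this clean is systematically replacing $A$ by $A+1$ and $B$ by $B+1$ everywhere, which is exactly why the awkward-looking factor $\bigl((a+\tfrac1d)/(b+\tfrac1d)\bigr)^\nu$ appears in the statement. Once the normalized two-sided tail estimate $\int_A^\infty w^\nu e^{-w}dw\approx (A+1)^\nu e^{-A}$ is in hand, both the short and long cases follow by the elementary manipulations above, and undoing the scaling $w=du$ recovers \eqref{large1}.
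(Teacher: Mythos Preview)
Your reduction to $d=1$ by the substitution $w=du$ is exactly how the paper begins. After that the routes diverge: you split into the ``short'' case $B-A\le 1$ versus the ``long'' case $B-A>1$, whereas the paper splits on $b\ge 1$ versus $b<1$. For $b\ge 1$ the paper shifts the variable, writing $\int_a^b u^\nu e^{-u}du=e^{-a}\int_0^{b-a}(a+u)^\nu e^{-u}du$, and then uses the elementary two–sided bound $(a+u)^\nu\stackrel{c}{\approx} a^\nu+u^\nu$; this turns the integral into $e^{-a}(a^\nu+1)\bigl((b-a)\wedge 1\bigr)$ in one line, which one checks is the claimed right-hand side. For $b<1$ the exponential is $\approx 1$ and the estimate is immediate. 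This shift-and-split trick avoids both the incomplete-gamma tail estimate and any subtraction of tails.

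Your outline is viable but two steps need repair. In the short case you assert a pointwise comparison $w^\nu\approx B^\nu\bigl(\tfrac{A+1}{B+1}\bigr)^\nu\approx (B+1)^\nu$: the second $\approx$ is false for small $B$ (take $B=10^{-3}$, $\nu=1$), and the pointwise lower bound on $w^\nu$ fails near $w=0$. What is true, and what you actually need, is the integrated statement $\int_A^B w^\nu\,dw\stackrel{c}{\approx} B^\nu(B-A)$, proved by separating $A\ge B/2$ from $A<B/2$. In the long case, writing $\int_A^B=\int_A^\infty-\int_B^\infty$ and invoking $\int_A^\infty\approx (A+1)^\nu e^{-A}$ is dangerous: these tail estimates hold only up to $\nu$-dependent constants, so the subtraction can produce a negative or uncontrolled quantity (try $A$ near the mode $\nu$ with $B=A+1$ and $\nu$ large). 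The clean lower bound is to restrict to the first unit sub-interval, $\int_A^B\ge\int_A^{A+1}w^\nu e^{-w}dw\ge e^{-1}\min\{A^\nu,(A+1)^\nu\}e^{-A}$, which already gives what you want. With those two fixes your decomposition goes through; the paper's shift $(a+u)^\nu\approx a^\nu+u^\nu$ simply gets there faster.
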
 
 \begin{proof}  Let 
 $F(\nu, a,b,d)=\int_{a}^b u^{\nu}e^{-u}du$. Since $F(\nu, a,b,d)=d^{-\nu-1} F(\nu, ad,bd,1)$ it is enough to prove the lemma for $d=1$. Assume that $ b\ge 1$. Then  $$\int_{a}^b u^{\nu}e^{-u}du= e^{-a}\int_{0}^{b-a} (a+u)^{\nu}e^{-u}du \stackrel c \approx e^{-a}\int_{0}^{b-a} (a^{\nu}+u^{\nu})e^{-u}du \stackrel c \approx e^{-a} (a^{\nu}+1)((b-a)\wedge1),$$ which is an equivalent form of (\ref{large1}) in the case $b\ge d=1$. If $b<1$ then
  $$\int_{a}^b u^{\nu}e^{-u}du \stackrel c \approx   b^{\nu}(b-a),$$
  which is exactly (\ref{large1}) in the case $b<d=1$. Note that in all comparisons above the constant $c$ is dependent only  on $\nu$.
 
  \end{proof}
   \begin{lem}  
 \label{logestimate:lemma}
 
Let $0\le a\le 1$. Then for every $v>0$ we have

\begin{eqnarray}\int_0^a \frac{e^{-vu}u du}{\log^2 u+1}&\approx& \frac{1}{(v+1/a)^2(\log^2 (v+1/a)+1)}.\label{I}\end{eqnarray}

If additionally $av\le 1$ then 

\begin{eqnarray}\int_a^1 \frac{e^{-vu}u du}{1-\log u}\approx \frac{1-a}{(v+1)^{3/2}(1+\log (v+1))}.\label{J}\end{eqnarray}

\end{lem}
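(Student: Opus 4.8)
\emph{Proof proposal.} Both estimates are of Laplace type: each integrand is $e^{-vu}$ times a fixed power of $u$ times a \emph{slowly varying} logarithmic weight $\phi$ --- namely $\phi(u)=1/(\log^{2}u+1)$ in \eqref{I} and $\phi(u)=1/(1-\log u)=1/(1+|\log u|)$ in \eqref{J} --- and on the scale at which the purely exponential factor $e^{-vu}u^{\nu}$ concentrates, $\phi$ is essentially constant. The plan is therefore: locate the ``center'' $u_{*}$ of the measure $e^{-vu}u^{\nu}\,du$ on the interval of integration; show that $\int e^{-vu}u^{\nu}\phi(u)\,du\approx\phi(u_{*})\int e^{-vu}u^{\nu}\,du$; and evaluate the remaining purely exponential integral by Lemma~\ref{gamma}, formula \eqref{large1}.

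For \eqref{I} put $u_{*}=1/(v+1/a)$, so that $u_{*}\approx\min(a,1/v)$, $u_{*}\le a\le1$, and the right-hand side of \eqref{I} is precisely $u_{*}^{2}\phi(u_{*})$; note also that \eqref{large1} gives $\int_{0}^{a}e^{-vu}u\,du\approx u_{*}^{2}$. The \emph{lower} bound follows by restricting the integral to $[u_{*}/2,u_{*}]$, where $e^{-vu}\ge e^{-vu_{*}}\ge e^{-1}$ (because $vu_{*}=va/(va+1)<1$) and $\phi(u)\approx\phi(u_{*})$, so that the integral is $\gtrsim u_{*}^{2}\phi(u_{*})$. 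For the \emph{upper} bound split at $u_{*}$: on $(0,u_{*}]$ the weight $\phi$ is increasing, hence $\le\phi(u_{*})$, giving $\int_{0}^{u_{*}}e^{-vu}u\,\phi(u)\,du\le\tfrac12u_{*}^{2}\phi(u_{*})$; on $[u_{*},a]$ write $e^{-vu}=e^{-vu/2}\cdot e^{-vu/2}$, pull out $\sup_{u\ge u_{*}}\bigl(e^{-vu/2}\phi(u)\bigr)$, and bound the residual integral by $\int_{u_{*}}^{a}e^{-vu/2}u\,du\le\int_{0}^{a}e^{-vu/2}u\,du\approx u_{*}^{2}$ (again \eqref{large1}). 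Summing the two pieces gives the matching upper bound.

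For \eqref{J} the interval is $[a,1]$ with $av\le1$, i.e.\ $a\le1/v$, and $\phi(u)=1/(1+|\log u|)$ is again slowly varying. The center is $u_{*}\approx\min(1,1/v)$. When $v\lesssim1$ the exponential is harmless, $e^{-vu}\approx1$ on $[a,1]$, and the mass sits near $u=1$ where $\phi\approx1$, so the integral is $\approx\int_{a}^{1}u^{\nu}\,du\approx1-a$, matching $\tfrac{1-a}{(v+1)^{3/2}(1+\log(v+1))}\approx1-a$. When $v\gtrsim1$ the factor $e^{-vu}u^{\nu}$ concentrates at $u\sim1/v\ge a$, where $1-\log u\approx1+\log v$; applying \eqref{large1} to $\int_{a}^{1}e^{-vu}u^{\nu}\,du$ (with $e^{-av}\approx1$, $a+1/v\approx1/v$, and $v(1-a)+1\approx v+1$, valid throughout $av\le1$) produces the power of $v$ and the $1-a$ factor, and multiplication by $\phi(u_{*})\approx1/(1+\log v)$ gives the claim; the regimes $v\lesssim1$ and $v\gtrsim1$ are glued by replacing $v$ with $v+1$. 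The tail near $u=1$, where $\phi(u)>\phi(u_{*})$, is treated just as in \eqref{I} by splitting off $e^{-vu/2}$.

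The bookkeeping with $u_{*}$, the monotonicity of $\phi$, and the case distinction on the size of $v$ are all routine. The one substantive point --- the \textbf{main obstacle} --- is the uniform control, near $u=1$, of the part of the integral where the logarithmic weight \emph{exceeds} its value at the center, i.e.\ the bound $\sup_{u\in[u_{*},1]}e^{-vu/2}\phi(u)\lesssim\phi(u_{*})$. After the substitution $u=s/v$ (in the regime $v\gtrsim1$) this becomes $e^{-s/2}(1+\log^{2}v)\lesssim1+(\log v-\log s)^{2}$ for \eqref{I}, and the analogous inequality with $1+\log v$ in place of $1+\log^{2}v$ for \eqref{J}: this holds because for $s\le\sqrt v$ the right-hand side is already $\gtrsim\log^{2}v$, while for $s>\sqrt v$ the factor $e^{-s/2}$ dominates any power of $\log v$. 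Everything else is elementary estimation combined with \eqref{large1}.
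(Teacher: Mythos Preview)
Your approach is correct and genuinely different from the paper's. You organize both estimates around a single Laplace-method template: locate the center $u_*$, get the lower bound from $[u_*/2,u_*]$, and for the upper bound split at $u_*$, using monotonicity of $\phi$ below $u_*$ and the factorization $e^{-vu}=e^{-vu/2}e^{-vu/2}$ above $u_*$ together with the pointwise inequality $e^{-s/2}(1+\log^2 v)\lesssim 1+(\log v-\log s)^2$ (and its first-power analogue), proved via the threshold $s=\sqrt v$. The paper instead argues by explicit case analysis ($av<2$ versus $av\ge2$ for \eqref{I}, $v<2$ versus $2\le v\le 1/a$ for \eqref{J}), substitutes $s=vu$, and controls the logarithmic weight by two clean algebraic identities: for \eqref{I} the inequality $(q-r)^2+1\ge q^2/(r^2+1)$ gives $\frac{1}{(\log v-\log s)^2+1}\le \frac{1+\log^2 s}{\log^2 v}$, turning the integral directly into $\frac{1}{v^2\log^2 v}\int_0^\infty e^{-s}s(1+\log^2 s)\,ds$; for \eqref{J} the inequality $r-q+1\ge r/(q+1)$ plays the analogous role. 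Your framework is more systematic and transfers verbatim between the two integrals; the paper's device is slightly slicker in that it replaces the slowly varying weight by an integrable one in a single step, avoiding the $s\lessgtr\sqrt v$ split. Either route uses \eqref{large1} for the residual power-exponential integral, and your observation that $v(1-a)+1\approx v+1$ under $av\le1$ is exactly what makes the $(1-a)$ factor fall out of \eqref{large1} as claimed.
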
  
\begin{proof}
  Let  $$J(a,v)=\int_0^a \frac{e^{-vu}u du}{\log^2 u+1}.$$ 
  
  First, assume that $av<2$ then 
  $$J(a,v)\approx \int_0^a \frac{ u du}{\log^2 u+1}\le \frac{ a^2}{2(\log^2 a+1)}.$$ 
  If additionally $a>1/2$ then  $$J(a,v)\approx 1.$$ If $a\le 1/2$ then 
  $$J(a,v)\ge e^{-2} \int_{a/2}^a \frac{u du}{\log^2 u+1}\ge c \frac{ a^2}{\log^2 a+1},$$ 
  which ends the proof (\ref{I}) in the case $av<2$. 
    
  We assume now that  $av\ge 2$.  
   Observe that for every $q ,r \in\R$, from the fact that \\$(qr/\sqrt{r^2+1}-\sqrt{r^2+1})^2\geq 0$ we get
 \begin{eqnarray*}
    \label{inequality:01}
    (q-r)^2+1\geq \frac{q^2}{r^2+1}\/.
 \end{eqnarray*}  
 This implies that $$ \frac{1}{(\log v-\log s)^2+1}\leq \frac{\log^2 s+1}{\log^2 v}  $$
   Consequently, we obtain
 \begin{eqnarray*}
   J(a,v) = \frac{1}{v^2}\int_0^{av} \frac{e^{-s}s\,ds}{(\log v-\log s)^2+1}\leq \frac{1}{v^2\log^2 v}\int_0^\infty e^{-s}s(\log^2 s+1)ds\/.
 \end{eqnarray*}
 On the other hand we get
 \begin{eqnarray*}
 J(a,v) = \frac{1}{v^2}\int_0^{av} \frac{e^{-s}s\,ds}{(\log v-\log s)^2+1}\ge
     \frac{1}{v^2(\log^2v+1)}\int_1^2 e^{-s}s\,ds,
 \end{eqnarray*}
 which ends the proof (\ref{I}).
 
 Let \begin{eqnarray*}
  J(a,v)&=&\int_{a}^1 \frac{e^{-vu}u^{1/2}}{1-\log u}\,du \\
   &=& \frac{1}{v^{3/2}}\int_{va}^v \frac{e^{-s}s^{1/2}}{1-\log s+\log v}\,ds\/.
 \end{eqnarray*}
 
 If $v<2$ then
 
 \begin{eqnarray}
 J(a,v)&\approx&\int_{a}^1 \frac{u^{1/2}}{1-\log u}\,du \approx 1-a,\label{J1}
 \end{eqnarray}
 which is (\ref{J}) in this case.

 Next, we assume that $2\le v\le 1/a$. 
 Using the fact that for every $r>q>0$ we have $r-q+1\geq r/(q+1)$ we get
 \begin{eqnarray*}
    \int_{va}^v \frac{e^{-s}s^{1/2}}{1-\log s+\log v}\,ds&\leq& \left(\int_0^1+\int_1^v\right)\frac{e^{-s}s^{1/2}}{1-\log s+\log v}\,ds\\
    &\leq& \frac{c}{\log v}\int_0^1 e^{-s}s^{1/2}ds + \frac{1}{\log v}\int_1^v  e^{-s}s^{1/2}(1+\log s)ds\leq \frac{c_2}{\log v}\/.
 \end{eqnarray*}
 Moreover, for $2<v<1/a$ we get
 \begin{eqnarray*}
    \int_{va}^v \frac{e^{-s}s^{1/2}}{1-\log s+\log v}\,ds&\geq& \int_1^2 \frac{e^{-s}s^{1/2}}{1-\log s+\log v}\,ds\geq \frac{c_1}{\log v}\/.
 \end{eqnarray*} 
 We have just proved that 
 \begin{eqnarray}
    J(a,v)\approx  \frac{1}{v^{3/2}\log v},\ 2<v<1/a.\label{J2}
 \end{eqnarray} Combining (\ref{J1}) and (\ref{J2}) we complete the proof of (\ref{J}).
\end{proof}
  
\subsection{Estimates of $w_{1,\,\lambda}$ and $w_{2,\,\lambda}$}

 \begin{lem}
   \label{w1:estimate:lemma}
   There exist constants $c=c(\mu)>0$ and $\theta_\mu>0$ such that
   \begin{eqnarray}
      \label{w1:estimate:abs}
      |w_{1,\,\lambda}(v)|\leq c x^{\mu-3/2}e^{-v\theta_\mu}\/,\quad v>0\/.
   \end{eqnarray}
 \end{lem}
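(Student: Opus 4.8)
The plan is to bound each of the $k_\mu$ terms in the sum defining $w_{1,\,\lambda}$ separately, handling the ranges $1<x\le 2$ and $x>2$ by different arguments. If $k_\mu=0$ the sum is empty and the lemma is trivial, so assume $k_\mu\ge 1$. The structural input I would rely on is the classical localization of the zeros of $K_\mu$ (see \cite{Erdelyi:1954}): the zeros $z_1,\dots,z_{k_\mu}$ all lie in the open left half-plane, so $\operatorname{Re}z_i<0$ for every $i$. Hence, putting $\theta_\mu:=\min_{1\le i\le k_\mu}(-\operatorname{Re}z_i)>0$, we have $|e^{z_iv}|=e^{v\operatorname{Re}z_i}\le e^{-\theta_\mu v}$ and $|e^{\lambda z_i}|=e^{\lambda\operatorname{Re}z_i}\le 1$ for all $v,\lambda>0$. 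Also $K_{\mu-1}(z_i)\ne 0$, since $K_\mu$ and $K_{\mu-1}$ have no common zeros, so each $z_i/K_{\mu-1}(z_i)$ is a fixed nonzero constant depending only on $\mu$.

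For $x>2$ I would use the asymptotic expansion (\ref{asymp_K_infty}) at the point $xz_i$; this is legitimate because $|\arg(xz_i)|=|\arg z_i|<\pi<\tfrac32\pi$, and it yields a bound $|K_\mu(xz_i)|\le c_i\,x^{-1/2}e^{-x\operatorname{Re}z_i}$ for all $x\ge 2$ (the intermediate range where the expansion is not yet effective being covered by continuity of $K_\mu$ on a compact arc), with $c_i$ depending only on $\mu$. The key cancellation is that, since $\lambda=x-1$,
\[
|e^{\lambda z_i}|\,|K_\mu(xz_i)|\;\le\; c_i\,x^{-1/2}\,e^{(x-1)\operatorname{Re}z_i-x\operatorname{Re}z_i}\;=\;c_i\,x^{-1/2}\,e^{-\operatorname{Re}z_i},
\]
the last exponential being a constant. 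Together with $\lambda=x-1\ge x/2$ and $|e^{z_iv}|\le e^{-\theta_\mu v}$, the $i$-th term of $w_{1,\,\lambda}$ therefore satisfies
\[
\frac{x^\mu}{\lambda}\,\frac{|z_i|}{|K_{\mu-1}(z_i)|}\,|e^{\lambda z_i}|\,|K_\mu(xz_i)|\,|e^{z_iv}|\;\le\; \frac{2x^\mu}{x}\cdot c\cdot x^{-1/2}\cdot e^{-\theta_\mu v}\;=\;c\,x^{\mu-3/2}e^{-\theta_\mu v},
\]
and summing over the finitely many $i$ gives (\ref{w1:estimate:abs}) for $x>2$.

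For $1<x\le 2$ the factor $1/\lambda$ is unbounded, but this is exactly compensated by the vanishing of $K_\mu(xz_i)$ as $x\to 1^+$: since $xz_i\to z_i$, a zero of $K_\mu$, the map $x\mapsto K_\mu(xz_i)/(x-1)$ extends continuously across $x=1$ (with value $z_iK_\mu'(z_i)$) and is thus bounded on $[1,2]$ by a constant $M_i$ depending only on $\mu$. Because $x^\mu$ and $|e^{\lambda z_i}|$ are bounded on $[1,2]$ and $|e^{z_iv}|\le e^{-\theta_\mu v}$, the $i$-th term is at most $c\,e^{-\theta_\mu v}$; since $x^{\mu-3/2}$ is bounded below on $[1,2]$ by a positive constant depending only on $\mu$, this is $\le c\,x^{\mu-3/2}e^{-\theta_\mu v}$, and summing over $i$ completes the proof in this range as well.

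The two places that will need care are the regime $x\to 1^+$, where the spurious singularity $1/\lambda$ must be absorbed by the zero of $K_\mu$ at $z_i$, and the clean bookkeeping of the exponential factors $e^{\lambda z_i}$, $e^{-x\operatorname{Re}z_i}$ and $e^{z_iv}$ in the large-$x$ regime, so that precisely the power $x^{\mu-3/2}$ and the decay $e^{-\theta_\mu v}$ are left. Both hinge on the single external fact that the zeros of $K_\mu$ lie strictly in the left half-plane; beyond that, the argument is just an application of the asymptotics of $K_\mu$ recorded in the Preliminaries.
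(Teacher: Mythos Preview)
Your proof is correct and follows essentially the same route as the paper's: both split into the ranges $1<x\le 2$ and $x\ge 2$, set $\theta_\mu=-\max_i\operatorname{Re}z_i$, use the asymptotics \eqref{asymp_K_infty} together with the cancellation $|e^{\lambda z_i}K_\mu(xz_i)|\lesssim x^{-1/2}$ for large $x$, and absorb the $1/\lambda$ near $x=1$ via the vanishing $K_\mu(z_i)=0$. The only cosmetic difference is that the paper phrases the small-$x$ step as the Lipschitz bound $|K_\mu(xz_i)|=|K_\mu(xz_i)-K_\mu(z_i)|\le c_1(x-1)$, whereas you phrase it as continuity of $x\mapsto K_\mu(xz_i)/(x-1)$ on $[1,2]$; these are equivalent.
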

\begin{proof}
Recall that the set of zeros of the function $K_\mu(z)$ is denoted  by $Z=\{z_1,...,z_{k_\mu}\}$. 
For every $z_i\in Z$ we have $\Re z_i<0$ and the set $Z$ is finite.  Consequently, there exists constant $c_1>0$ such that for every $i=1,\ldots,k_\mu$ we have
 $$\left|\frac{K_\mu(xz_i) }{ K_{\mu-1}(z_i)}\right|= \left|\frac{{K_\mu(xz_i)-K_\mu(z_i)} }{ K_{\mu-1}(z_i)}\right|\le c_1(x-1).$$
Moreover, using (\ref{asymp_K_infty}), the constant $c_1$ can be chosen to ensure that for every $x\ge 2$ we have

 $$\left|\frac{e^{\lambda z_i}K_\mu(xz_i) }{ K_{\mu-1}(z_i)}\right|\le c_1 \frac1{\sqrt{x}}.$$
 Hence there are $c_2>0$ and $\theta_\mu= -\max_i\{\Re z_i\}>0$ such that
 $$\left|\frac{z_i e^{\lambda z_i} K_\mu(xz_i)}{ K_{\mu-1}(z_i)} \/ e^{z_i v}\right|\le c_2\frac \lambda{x^{3/2}}e^{-\theta_\mu v}$$ 
 and it gives  $$ \left|w_{1,\/\lambda}(v)\right|\le  c_3 x^{\mu-3/2} e^{-\theta_\mu v}\/.$$ 

%
%
\end{proof}
  Let us define for $u>0$ and $x>1$:
   \begin{eqnarray*}
    S_\mu(x,u)=I_\mu\(xu\)K_\mu(u)-I_\mu(u)K_\mu\(xu\)\/.
   \end{eqnarray*}
The function $S_\mu$ appears in the formula for $w_{2,\lambda}$ and consequently, the uniform estimates of the function $S_\mu$, which are given in the next Lemma, are crucial to get the estimates of the function $w_{2,\lambda}$ given in Lemma \ref{w2:estimate:lemma} for $\mu>0$ and in Lemma \ref{w:muzero:estimate:lemma} in the case $\mu=0$.

 \begin{lem}
     \label{S1:estimate:lemma}
     For $\mu\geq 0$ we have
   \begin{eqnarray}
      \label{S1:estimate:1}
   \frac{\lambda}{x}\frac {K_{\mu}(xu)}{K_{\mu}(u)}   \le  S_\mu(x,u)\le \lambda\frac{K_{\mu}(u)}{K_{\mu}(xu)}.
   \end{eqnarray}   
  There are constants $c_1$ and $c_2$ such that for $\mu\ge 0$, $1<x<2$ and $u>0$ we thus obtain
  \begin{eqnarray}
      \label{S1:estimate:2}
 c_1 \lambda \, e^{-\lambda\,u} \leq  S_\mu(x,u)\leq c_2 {\lambda}\, e^{\lambda\,u}\,.
  \end{eqnarray}  
  There is a  constant $c_1$ such that for $\mu>0$, $x>2$ and $u>0$ we have 
  \begin{eqnarray}
   \label{S1:estimate:3}
 c_1I_\mu\(xu\)K_\mu(u)  \leq  S_\mu(x,u)\leq I_\mu\(xu\)K_\mu(u)\,.
  \end{eqnarray}  
  There is a  constant $c_1$ such that for  $\mu=0$, $x>2$ and $xu>1$ we have 
  \begin{eqnarray}
   \label{S1:estimate:4}
 c_1I_0\(xu\)K_0(u)  \leq  S_0(x,u)\leq I_0\(xu\)K_0(u)\,.
  \end{eqnarray}
    For $\mu=0$, $x>2$ and $xu<1$ we have 
  \begin{eqnarray}
   \label{S1:estimate:4b}
 S_0(x,u)\approx \log x.
  \end{eqnarray}
  
  \end{lem}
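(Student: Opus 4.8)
The plan is to exploit the fact that $S_\mu(x,u) = I_\mu(xu)K_\mu(u) - I_\mu(u)K_\mu(xu)$ is, as a function of the first argument, essentially a solution of the modified Bessel equation on $(u,\infty)$ with a zero at the endpoint $x=1$ (up to the change of variables), so its sign and two-sided bounds can be read off from a monotonicity/Wronskian argument. Concretely, I would first establish \eqref{S1:estimate:1}. Write $S_\mu(x,u) = K_\mu(u)K_\mu(xu)\bigl(I_\mu(xu)/K_\mu(xu) - I_\mu(u)/K_\mu(u)\bigr)$; since $r\mapsto I_\mu(r)/K_\mu(r)$ is strictly increasing and positive, $S_\mu>0$. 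For the quantitative lower bound, I would use the Wronskian identity \eqref{Wronskian} to write $\tfrac{d}{dr}\bigl(I_\mu(r)/K_\mu(r)\bigr) = W\{K_\mu,I_\mu\}(r)/K_\mu^2(r)=\tfrac{1}{rK_\mu^2(r)}$, so that
\begin{eqnarray*}
   S_\mu(x,u) = K_\mu(u)K_\mu(xu)\int_u^{xu}\frac{dr}{rK_\mu^2(r)}\/.
\end{eqnarray*}
Using the monotonicity of $K_\mu$ (it is decreasing) one bounds $K_\mu(r)$ between $K_\mu(u)$ and $K_\mu(xu)$ on the interval of integration; replacing $K_\mu^2(r)$ by $K_\mu^2(u)$ gives $S_\mu(x,u)\ge \frac{K_\mu(xu)}{K_\mu(u)}\int_u^{xu}\frac{dr}{r} = \frac{K_\mu(xu)}{K_\mu(u)}\log x \ge \frac{K_\mu(xu)}{K_\mu(u)}\cdot\frac{\lambda}{x}$ (since $\log x\ge (x-1)/x=\lambda/x$ for $x>1$), and replacing $K_\mu^2(r)$ by $K_\mu^2(xu)$ and using $\log x\le x-1=\lambda$ yields the upper bound $\frac{K_\mu(u)}{K_\mu(xu)}\lambda$.

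Next I would derive the three corollaries for the two regimes $1<x<2$ and $x>2$ from \eqref{S1:estimate:1} together with the asymptotics in the Preliminaries. For \eqref{S1:estimate:2}, in the range $1<x<2$ the ratio $K_\mu(xu)/K_\mu(u)$ is comparable to $e^{-\lambda u}$ up to a bounded factor for $u$ large (by \eqref{asymp_K_infty}) and bounded above and below by constants for $u$ bounded (by \eqref{asympt_K_0} and continuity, noting $xu/u = x\in(1,2)$ stays in a fixed compact ratio), so $\frac{\lambda}{x}\frac{K_\mu(xu)}{K_\mu(u)}\ge c_1\lambda e^{-\lambda u}$ and $\lambda\frac{K_\mu(u)}{K_\mu(xu)}\le c_2\lambda e^{\lambda u}$; the case $\mu=0$ needs \eqref{K0_atzero} instead of \eqref{asympt_K_0} but the logarithmic singularity still gives two-sided constant bounds for $u$ in a compact set. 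For \eqref{S1:estimate:3} and \eqref{S1:estimate:4} with $x>2$, the upper bound $S_\mu(x,u)\le I_\mu(xu)K_\mu(u)$ is immediate from the definition since $I_\mu(u)K_\mu(xu)>0$. For the matching lower bound I would factor $S_\mu(x,u) = I_\mu(xu)K_\mu(u)\bigl(1 - \frac{I_\mu(u)K_\mu(xu)}{I_\mu(xu)K_\mu(u)}\bigr)$ and show the subtracted ratio is bounded away from $1$ uniformly in $u$ when $x>2$: for large $u$ the ratio behaves like $\frac{1}{\sqrt{x}}e^{-2\lambda u}\le \frac{1}{\sqrt 2}$ by \eqref{asymp_I_infty}--\eqref{asymp_K_infty}, while for bounded $u$ (with $\mu>0$, so $I_\mu(u)\sim c u^\mu\to0$ by \eqref{I_atzero} and $K_\mu(xu)/K_\mu(u)=x^{-\mu}(1+o(1))<1$ by \eqref{asympt_K_0}) the ratio is again bounded below $1$; a compactness argument fills in the intermediate $u$. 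The case $\mu=0$, $xu>1$ in \eqref{S1:estimate:4} is the same, the restriction $xu>1$ being exactly what keeps $I_0(xu)$ comparable to its asymptotic form and $K_0(xu)$ away from its logarithmic blow-up relative to $K_0(u)$.

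Finally, for \eqref{S1:estimate:4b}, the regime $\mu=0$, $x>2$, $xu<1$: here both arguments $u<xu<1$ are small, so I would use $I_0(r)=1+O(r^2)$ and $K_0(r) = -\log(r/2)I_0(r)+O(1) = \log(1/r)+O(1)$ from \eqref{I_atzero} and \eqref{K0_atzero}. Then
\begin{eqnarray*}
   S_0(x,u) &=& I_0(xu)K_0(u)-I_0(u)K_0(xu)\\
   &=& \bigl(1+O(u^2)\bigr)\bigl(\log(1/u)+O(1)\bigr) - \bigl(1+O(u^2)\bigr)\bigl(\log(1/(xu))+O(1)\bigr)\\
   &=& \log(1/u) - \log(1/(xu)) + O(1) = \log x + O(1)\/.
\end{eqnarray*}
Since $x>2$ forces $\log x\ge \log 2>0$, the $O(1)$ error is absorbed and $S_0(x,u)\approx \log x$. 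The main obstacle I anticipate is not any single estimate but organizing the case analysis cleanly: the behavior of the ratios $K_\mu(xu)/K_\mu(u)$ and $I_\mu(u)/I_\mu(xu)$ genuinely changes character across $u\to 0$, $u$ bounded, and $u\to\infty$, and one must be careful that the comparison constants can be chosen uniformly in $u$ over each fixed range of $x$ — this is where the integral representation via the Wronskian is most valuable, since it replaces delicate asymptotics by the single clean inequality \eqref{S1:estimate:1} from which everything else follows by elementary estimates on $K_\mu$ and $I_\mu$.
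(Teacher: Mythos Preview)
Your approach is essentially the paper's. For (\ref{S1:estimate:1}) the paper also introduces $\psi(r)=I_\mu(r)/K_\mu(r)$, computes $\psi'(r)=1/(rK_\mu^2(r))$ from the Wronskian, and then applies the mean value theorem (equivalently, your integral) together with the monotonicity of $K_\mu$. The corollaries (\ref{S1:estimate:2})--(\ref{S1:estimate:4b}) are argued via the same factorization and the same asymptotics you describe.

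Two points need tightening. For (\ref{S1:estimate:3})--(\ref{S1:estimate:4}), your compactness argument for intermediate $u$ must also cope with the unbounded range $x>2$; the paper handles this by first observing that $g(x,u):=\frac{I_\mu(u)K_\mu(xu)}{I_\mu(xu)K_\mu(u)}=\psi(u)/\psi(xu)$ is \emph{decreasing in $x$}, so $g(x,u)\le g(2,u)$ for all $x>2$, after which compactness in $u$ alone suffices (for $\mu>0$ the limits of $g(2,u)$ at $0$ and $\infty$ are $2^{-2\mu}$ and $0$; for $\mu=0$ one restricts to $xu>1$ and checks $g(2,u)$ on $u\ge 1/2$ and $g(x,u)\le g(2,1/2)$ when $u<1/2$, $xu>1$). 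For (\ref{S1:estimate:4b}), the inference ``$S_0=\log x+O(1)$ and $\log x\ge\log 2$, hence $S_0\approx\log x$'' is not valid as stated: for $x$ near $2$ the $O(1)$ remainder could exceed $\log 2$. The paper uses the \emph{exact} relation $K_0(z)=-\log(z/2)\,I_0(z)+A(z)$ to write
\[
  S_0(x,u)=I_0(xu)I_0(u)\log x + \bigl[I_0(xu)A(u)-I_0(u)A(xu)\bigr],
\]
which at least guarantees the coefficient of $\log x$ is $\ge 1$; the bounded cross term still requires either an explicit constant below $\log 2$ or, more simply, a compactness argument on $\{2\le x\le M,\ 0\le u\le 1/x\}$ (where $S_0$ extends continuously to $u=0$ with value $\log x>0$) for bounded $x$, combined with the easy bound $S_0\ge\frac12\log x$ once $\log x$ exceeds twice the cross-term bound. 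The paper is admittedly as terse as your sketch at this last step.
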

  
   \begin{proof}
 Write for $u>0$
 \begin{eqnarray*}
 \psi(u) = \frac{I_{\mu}(u)}{K_{\mu}(u)}\,. 
 \end{eqnarray*}
 Then by (\ref{Wronskian}) we have
 \begin{eqnarray*}
 \psi'(u) = \frac{1}{u}\, \frac{1}{K_{\mu}^2(u)}\,. 
 \end{eqnarray*}
 Writing 
  \begin{eqnarray*}
 S_{\mu}(x,u) = \[\psi(xu)-\psi(u)\]\,K_{\mu}(xu)\,K_{\mu}(u) \,. 
 \end{eqnarray*}
 we obtain from the Lagrange theorem
  \begin{eqnarray*}
 S_{\mu}(x,u) = \frac{xu-u}{\theta xu}\, \frac{K_{\mu}(xu)\,K_{\mu}(u)}{K_{\mu}(\theta xu)\,K_{\mu}(\theta xu)} \,. 
 \end{eqnarray*}
The quantity $\theta$ here has the property $1\leq \theta x \leq x$.
This and the monotonicity of the function $K_{\mu}$,  give the estimate (\ref{S1:estimate:1}). The estimate (\ref{S1:estimate:2})  is a direct consequence of the  limiting behaviour of the function $K_{\mu}$.

 To prove (\ref{S1:estimate:3}) and (\ref{S1:estimate:4}) note that  
the function  $g(x,u)=\frac{\psi(u)}{ \psi(xu)}= \frac{ I_\mu\(u\)K_\mu(xu)}{K_\mu\(u\)I_\mu\(xu\)}$  as a function of $x$ is decreasing. Hence for $ x>2$,  $$g(x,u)\le g(2,u)<1, u>0. $$
If $\mu>0$, then the limits at $0$ and $\infty$ of $g(2,u)$ are strictly less then $1$.  By continuity 
  $$\sup_{u>0}g(2,u)= a<1.$$
  If $\mu=0$ by the same argument for $ x>2$,  $$g(x,u)\le \sup_{v\ge 1/2}g(2,v)<1, u\ge 1/2. $$
  If $xu>1$, $u<1/2$ and $x>2$ then
   $$g(x,u)= \frac{ I_0\(u\)K_0(xu)}{K_0\(u\)I_0\(xu\)}\le \frac{ I_0\(1/2\)K_0(1)}{K_0\(1/2\)I_0\(1\)}= g(2,1/2) <1.$$
   These estimates  imply that 
   $$g(x,u)\le \sup_{v\ge 1/2}g(2,v)=a<1, u>1/x. $$

  %
  Hence,  in both cases ($\mu=0$ or $\mu>0$) we have for $x>2$  and $xu>1$
$$
 S_{\mu}(x,u) =   \[1-\frac {\psi(u)}{\psi(xu)}\]\,\psi(xu) K_{\mu}(xu)\,K_{\mu}(u)\ge (1-a)I_{\mu}(xu)\,K_{\mu}(u)\,,
$$
  which  ends the proof of (\ref{S1:estimate:3}) and (\ref{S1:estimate:4}).

 It remains to consider $\mu=0, x>2$ and 
     $xu<1$. We apply the asymptotics of $K_0$ at $0$. Namely, by (\ref{K0_atzero}) we can write 
   $$K_0(z)= -\log \frac z2 \, I_0(z)+ A(z), $$where  $A(z)\to c>0, z\to 0$.  This yields 
   $$S_{0}(x,u)= I_0(ux)I_0(u)\log x + I_0(ux)A(u)- I_0(u)A(ux)\approx \log x.$$
  \end{proof}

 \begin{lem} 
 \label{w2:estimate:lemma}
 For $\mu>0$ and  $x>1$ we have
 \begin{eqnarray}
    \label{w2:estimates}
    w_{2,\/\lambda}(v)\approx  {(-\cos(\pi\mu))}\frac {x^{2\mu-1}}{(v+1)^{\mu+3/2}(v+x)^{\mu+1/2}}\/,\  v>0 \/.
 \end{eqnarray}   
 \end{lem}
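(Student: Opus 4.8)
The plan is to reduce (\ref{w2:estimates}) to a single integral estimate and then to evaluate that integral separately for $x>2$ and for $1<x<2$ (the isolated value $x=2$ poses no difficulty, e.g.\ by continuity in $x$). We may assume $\cos(\pi\mu)\neq 0$: otherwise $\mu-1/2\in\N$, the prefactor $\cos(\pi\mu)$ forces $w_{2,\lambda}\equiv 0$, and the right-hand side of (\ref{w2:estimates}) vanishes as well. Writing
\begin{eqnarray*}
  \delta_\mu(u)=\cos^2(\pi\mu)K_\mu^2(u)+(\pi I_\mu(u)+\sin(\pi\mu)K_\mu(u))^2
\end{eqnarray*}
for the denominator in the definition of $w_{2,\lambda}$, we have $w_{2,\lambda}(v)=-\cos(\pi\mu)\frac{x^\mu}{\lambda}\int_0^\infty \frac{S_\mu(x,u)}{\delta_\mu(u)}e^{-(\lambda+v)u}u\,du$, and since $S_\mu(x,u)\ge 0$ by the lower bound in (\ref{S1:estimate:1}), $w_{2,\lambda}$ has the sign of $-\cos(\pi\mu)$, so it remains to prove
\begin{eqnarray*}
  \frac{x^\mu}{\lambda}\int_0^\infty \frac{S_\mu(x,u)}{\delta_\mu(u)}\,e^{-(\lambda+v)u}\,u\,du\;\approx\;\frac{x^{2\mu-1}}{(v+1)^{\mu+3/2}(v+x)^{\mu+1/2}}\,,\qquad x>1,\ v>0\,.
\end{eqnarray*}
First I would record, from the asymptotics (\ref{I_atzero}), (\ref{asymp_I_infty}), (\ref{K_atzero}) and (\ref{asymp_K_infty}) together with continuity and positivity on $(0,\infty)$, the uniform comparisons $I_\mu(r)\approx r^\mu e^{r}(1+r)^{-\mu-1/2}$, $K_\mu(r)\approx r^{-\mu}(1+r)^{\mu-1/2}e^{-r}$ for $r>0$, and (here $\cos(\pi\mu)\neq 0$ guarantees $\delta_\mu>0$ throughout) $\delta_\mu(u)\approx u^{-2\mu}$ on $(0,1]$ and $\delta_\mu(u)\approx u^{-1}e^{2u}$ on $[1,\infty)$.

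For $x>2$, (\ref{S1:estimate:3}) gives $S_\mu(x,u)\approx I_\mu(xu)K_\mu(u)$ for all $u>0$. Plugging in the uniform comparisons, using $\lambda\approx x$ and the cancellation $e^{(x-1)u}e^{-2u}e^{-(\lambda+v)u}=e^{-(v+2)u}$ in the exponent, I would arrive at
\begin{eqnarray*}
  \frac{x^\mu}{\lambda}\cdot\frac{S_\mu(x,u)}{\delta_\mu(u)}\,e^{-(\lambda+v)u}\,u\;\approx\;x^{2\mu-1}\,\frac{u^{2\mu+1}\,e^{-(v+2)u}}{(1+xu)^{\mu+1/2}(1+u)^{\mu-1/2}}\,,
\end{eqnarray*}
and then integrate, splitting at $u=1/x$ and $u=1$. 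On $(1,\infty)$ the integrand is $\approx x^{\mu-3/2}u\,e^{-(v+2)u}$, whose integral is exponentially small in $v$ and dominated by the other pieces; on $(0,1/x)$ it is $\approx x^{2\mu-1}u^{2\mu+1}e^{-(v+2)u}$ and on $(1/x,1)$ it is $\approx x^{\mu-3/2}u^{\mu+1/2}e^{-(v+2)u}$. Applying Lemma \ref{gamma} to each, and noting that the mass of $u\mapsto u^{a}e^{-(v+2)u}$ sits at $u\sim 1/(v+2)$, one gets: for $v\gtrsim x$ the first piece dominates and is $\approx x^{2\mu-1}(v+x)^{-2\mu-2}$, whereas for $v\lesssim x$ the second dominates and is $\approx x^{\mu-3/2}(v+1)^{-\mu-3/2}$. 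Since $v+1\approx v+x$ in the first regime and $v+x\approx x$ in the second, both are $\approx x^{2\mu-1}(v+1)^{-\mu-3/2}(v+x)^{-\mu-1/2}$, which proves the claim for $x>2$.

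For $1<x<2$ we have $\lambda\in(0,1)$, $x^\mu\approx 1$ and $v+x\approx v+1$, so the target reduces to $(v+1)^{-2\mu-2}$. Here I would use only the crude bound (\ref{S1:estimate:2}): multiplying $c_1\lambda e^{-\lambda u}\le S_\mu(x,u)\le c_2\lambda e^{\lambda u}$ by $e^{-(\lambda+v)u}$ traps $S_\mu(x,u)e^{-(\lambda+v)u}$ between $c_1\lambda e^{-(2\lambda+v)u}$ and $c_2\lambda e^{-vu}$. Since $u/\delta_\mu(u)\approx u^{2\mu+1}$ on $(0,1]$ and $\approx u^{2}e^{-2u}$ on $[1,\infty)$, Lemma \ref{gamma} gives $\int_0^\infty \frac{u}{\delta_\mu(u)}e^{-su}\,du\approx(1+s)^{-2\mu-2}$ for every $s>0$, and as $v\le 2\lambda+v\le v+2$ these two rates yield comparable values, so both bounds are $\approx\lambda(1+v)^{-2\mu-2}$, and hence so is the integral; dividing by $\lambda$ finishes this case. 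The main obstacle I anticipate is the bookkeeping in the case $x>2$: one must keep track, as the ratio $v/x$ varies, of which of the three $u$-ranges carries the bulk of the integral, and then verify that the two non-trivial regimes fuse into the single rational expression in (\ref{w2:estimates}); by comparison, deriving the uniform bounds for $I_\mu$, $K_\mu$, $\delta_\mu$ from the pointwise asymptotics and discarding the exponentially small tail $u>1$ are routine.
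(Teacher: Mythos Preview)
Your proposal is correct and follows essentially the same route as the paper's proof: both split the analysis into the two cases $1<x<2$ and $x>2$, both use Lemma~\ref{S1:estimate:lemma} to control $S_\mu(x,u)$, split the $u$-integration at $1/x$ and $1$, and invoke Lemma~\ref{gamma} for the resulting incomplete gamma integrals. The only differences are presentational: you package the integrand for $x>2$ into a single closed-form comparison (via a uniform bound $\delta_\mu(u)^{-1}\approx u^{2\mu}(1+u)^{1-2\mu}e^{-2u}$), whereas the paper writes piecewise asymptotics and then computes the ratios $\tfrac{(v+1)^{\mu+3/2}(v+x)^{\mu+1/2}}{x^{\mu}}J_i$ and shows their sum is $\approx 1$, instead of your regime split $v\lesssim x$ versus $v\gtrsim x$; and for $1<x<2$ you sandwich with both inequalities in (\ref{S1:estimate:2}), whereas the paper uses $S_\mu(x,u)\approx\lambda$ on $(0,1)$ directly and only the upper bound on $(1,\infty)$.
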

 
 \begin{proof}
  Let us denote  
   \begin{eqnarray*}
   h(x,u,v)&=& \frac{ S_\mu\(x,u\)}{
\cos^2(\pi\mu) K_\mu^2(u)+(\pi I_\mu(u)+\sin(\pi \mu) K_\mu(u))^2} \/
 e^{-\lambda u} e^{-vu} \/u\\
 &\approx& \frac{ S_\mu\(x,u\)}{
 K_\mu^2(u)+I_\mu^2(u)} \/
 e^{-\lambda u} e^{-vu} \/u.
 \end{eqnarray*}
 Since 
 $w_{2,\/\lambda}(v)= 
-\cos(\pi\mu) \frac{x^\mu }{ \lambda}
 \int_0^\infty h(x,u,v)  du$, it is enough to estimate  $\int_0^\infty h(x,u,v) du$.
  which is done below for two cases.

A)  Case $1<x<2$.\newline
 Suppose that $ 0<u<1$.  By Lemma \ref{S1:estimate:lemma},
 $S_\mu\(x,u\)\approx \lambda$, thus
 \begin{eqnarray*}h(x,u,v)
 &\approx& \frac{\lambda  }{
 K_\mu^2(u)+I_\mu^2(u)} \/
 e^{-\lambda u} e^{-vu} \/u\\ &\approx&  \lambda u^{2\mu+1}e^{-(v+1)u},\ 0<u<1. \end{eqnarray*}
 For $u>1$ we have, by Lemma \ref{S1:estimate:lemma}, $S_\mu\(x,u\)\le  c\lambda e^u$, which yields the following upper bound.
 \begin{eqnarray*}h(x,u,v)
 &\le& c \frac{\lambda e^u }{
 K_\mu^2(u)+I_\mu^2(u)} \/
 e^{-\lambda u} e^{-vu} \/u\\ &\le&  c\lambda u\/e^{-(v+1)u}. \end{eqnarray*}
 Applying Lemma \ref{gamma} to  the above estimates we arrive  at
 $$\int_0^1 h(x,u,v)du\approx \frac {\lambda} {(v+1)^{2\mu+2}}$$
and 
 $$\int_1^\infty h(x,u,v)du\le c \frac {\lambda}{(v+1)^2}e^{-(v+1)}.$$
 Combining both integrals we obtain  
 $$\int_0^\infty h(x,u,v)du\approx \frac {\lambda} {(v+1)^{2\mu+2}},$$ 
 which proves the lemma in the case $1<x<2$.
 
B) Case $x>2$. \newline
 By  Lemma \ref{S1:estimate:lemma}, $ S_\mu(x,u)\approx I_\mu(xu)K_\mu\(u\)$, which implies
 \begin{eqnarray*} h(x,u,v)
 &\approx& \frac{ I_\mu(xu)K_\mu\(u\)}{K_\mu^2(u)+I^2_\mu(u)} \/
 e^{-\lambda u} e^{-vu} \/u.
 \end{eqnarray*}
 Next, using the asymptotics of the Bessel functions, we arrive at
  $$h(x,u,v)
\approx\left\{
\begin{array}{ll}
 x^\mu u^{2\mu+1}e^{-(v+\lambda)u}, & \hbox{$xu<1$,} \\
    x^{-1/2}u^{\mu+1/2}e^{-vu}, & \hbox{$1/x<u<1$,}\\
   x^{-1/2}ue^{-(v+2)u}, &\hbox{$u>1$.}\end{array}
\right.$$
To estimate $H(x,v)=\int_0^\infty h(x,u,v)du$ we split the integral into three parts:
 \begin{eqnarray*}  
 H(x,v)&=&\int_0^{1/x} h(x,u,v)du+\int_{1/x}^1 h(x,u,v)du+\int_{1}^\infty h(x,u,v)\\
      &=&J_1(x,v)+J_2(x,v)+J_3(x,v).
 \end{eqnarray*}
 Applying  Lemma \ref{gamma} with $a=0, b=1/x$ and $d= v+\lambda$, the first integral can be estimated in the following way:
 \begin{eqnarray*}J_1(x,v)&\approx& x^\mu \int_0^{1/x}u^{2\mu+1}e^{-(v+\lambda)u}du\\&\approx& \frac{x^\mu}{(v+\lambda)^{2\mu+1}} \frac  {1/x}{1+(v+\lambda)/x} \\
 &\approx& \frac{ x^{\mu}}{(v+x)^{2\mu + 2}}.\end{eqnarray*}
 Next, we deal with the second integral.  Again, by Lemma  \ref{gamma} with $a=1/x, b=1$ and $d= v$, we obtain
 %
 %
  \begin{eqnarray*} J_2(x,v) &\approx& x^{-1/2} \int_{1/x}^1 u^{\mu+1/2}e^{-vu}du\\
 &\approx& x^{-1/2}\left(\frac{\frac1 x +\frac1 v}{1+\frac1 v}\right)^{\mu+1/2} e^{-v/x}\frac {1-1/x}{1+v-v/x}\\ &\approx& x^{-1/2}\left(\frac{\frac1 x +\frac1 v}{1+\frac1 v}\right)^{\mu+1/2} e^{-v/x}\frac {\lambda }{x+\lambda v}\\ &\approx& x^{-1/2}\left(\frac{1+\frac v x }{1+ v}\right)^{\mu+1/2} e^{-v/x}\frac {1 }{ v+1}.
  \end{eqnarray*}
  The third integral can be estimated for $v>0$ as follows.
$$ J_3(x,v) \approx  x^{-1/2}  \int_{1}^\infty ue^{-(v+2)u}du \approx x^{-1/2}  \frac 1{(v+2)}  e^{-(v+2)}. $$
It is clear that $$ J_3(x,v) \le c \frac {x^{\mu}}{(v+1)^{\mu+3/2}(v+x)^{\mu+1/2}}.$$
Next, 
 $$\frac {(v+1)^{\mu+3/2}(v+x)^{\mu+1/2}} {x^{\mu}} I_1\approx \frac {(v+1)^{\mu+3/2}(v+x)^{\mu+1/2}} {x^{\mu}} \frac{ x^{\mu}}{(v+x)^{2\mu + 2}} = \frac {(v+1)^{\mu+3/2}}{(v+x)^{\mu+3/2}}$$
  and
 \begin{eqnarray*}\frac {(v+1)^{\mu+3/2}(v+x)^{\mu+1/2}} {x^{\mu}} I_2&\approx& \frac {(v+1)^{\mu+3/2}(v+x)^{\mu+1/2}} {x^{\mu}}  x^{-1/2}\left(\frac{1+\frac v x }{1+ v}\right)^{\mu+1/2} e^{-v/x}\frac {1 }{ v+1} \\
 &=& (1+ v/x)^{2\mu+1} e^{-v/x}.\end{eqnarray*}
 The observation   $\frac {(v+1)^{\mu+3/2}}{(v+x)^{\mu+3/2}}+ (1+ v/x)^{2\mu+1} e^{-v/x}\approx 1$ completes the proof.

 \end{proof}
  \begin{lem}
\label{w:muzero:estimate:lemma}
For $\mu=0$ and  $x>1$ we have
 
\begin{eqnarray*}
   -w_\lambda(v) &\approx& \frac{1}{x(v+1)^{3/2}(v+x)^{1/2}}\frac{\log (x+1)}{\log(v+2)(\log(x+1)+\log(v+2))}\/,\ v>0\/.
\end{eqnarray*}
\end{lem}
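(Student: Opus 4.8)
The plan is to follow the scheme used for $\mu>0$ in Lemma~\ref{w2:estimate:lemma}, exploiting the simplifications available at $\mu=0$. First I would observe that for $\mu=0$ one has $k_\mu=0$, so the sum defining $w_{1,\lambda}$ is empty and $w_\lambda=w_{2,\lambda}$; since moreover $\cos(\pi\mu)=1$ and $\sin(\pi\mu)=0$, this gives
$$-w_\lambda(v)=\frac1{\lambda}\int_0^\infty \frac{S_0(x,u)}{K_0^2(u)+\pi^2I_0^2(u)}\,e^{-\lambda u}e^{-vu}\,u\,du\/,$$
the integrand being positive by Lemma~\ref{S1:estimate:lemma}. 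Thus everything reduces to estimating this single integral from above and below, and the extra logarithmic factors in the statement will come from the behaviour of $K_0$ near the origin.

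The second step is to replace $I_0$, $K_0$ and $S_0$ by comparable elementary expressions on each range of $u$. From \pref{K0_atzero} one has $I_0(u)\approx1$ and $K_0(u)\approx1+\log(1/u)$ on $(0,1]$, while \pref{asymp_I_infty} and \pref{asymp_K_infty} give $I_0(u)\approx e^u/\sqrt u$ and $K_0(u)\approx e^{-u}/\sqrt u$ on $[1,\infty)$; hence $K_0^2(u)+\pi^2I_0^2(u)\approx(1+\log(1/u))^2$ for $u\le1$ and $\approx e^{2u}/u$ for $u\ge1$. For $1<x<2$ I would use $S_0(x,u)\approx\lambda$ on $(0,1)$ and $S_0(x,u)\le c\lambda e^{\lambda u}\le c\lambda e^u$ on $(1,\infty)$, both from \pref{S1:estimate:2}; then on $(1,\infty)$ the integrand is at most $c\lambda u^2e^{-(v+1)u}$, whose $\lambda^{-1}$-weighted integral decays exponentially in $v$ and is absorbed, whereas on $(0,1)$ the integral $\lambda^{-1}\int_0^1\frac{\lambda u e^{-vu}}{(1+\log(1/u))^2}\,du$ is estimated by \pref{I} of Lemma~\ref{logestimate:lemma}, yielding $-w_\lambda(v)\approx\frac1{(v+1)^2(1+\log^2(v+1))}$, which is the right side of the lemma when $x\approx1$.

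For $x>2$ I would split $\int_0^\infty=\int_0^{1/x}+\int_{1/x}^1+\int_1^\infty$. On $(0,1/x)$ one has $xu<1$, so $S_0(x,u)\approx\log x$ by \pref{S1:estimate:4b} and $e^{-\lambda u}\approx1$; \pref{I} then gives a contribution $\approx\frac{\log x}{x}\cdot\frac1{(v+x)^2(1+\log^2(v+x))}$. On $(1/x,1)$ one has $xu>1$, so $S_0(x,u)\approx I_0(xu)K_0(u)$ by \pref{S1:estimate:4}; after cancelling the exponentials the integrand becomes $\approx x^{-1/2}u^{1/2}e^{-vu}/(1-\log u)$, and when $v\le x$ (so that the endpoint $a=1/x$ satisfies $av\le1$) \pref{J} of Lemma~\ref{logestimate:lemma} gives a contribution $\approx\frac1{x^{3/2}(v+1)^{3/2}(1+\log(v+1))}$, while for $v>x$ the integral concentrates near the lower endpoint $u=1/x$ and a direct estimate bounds it by the $(0,1/x)$ term. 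On $(1,\infty)$ the integrand reduces to $\approx x^{-1/2}u e^{-(v+2)u}$, whose contribution again decays exponentially in $v$ and is absorbed (Lemma~\ref{gamma}). Adding the surviving pieces and splitting the verification into the ranges $v<1$, $1<v<x$ and $v>x$, I would check that the $(1/x,1)$ term dominates for $v\lesssim x$ and the $(0,1/x)$ term dominates for $v\gtrsim x$, and that in each range the sum is comparable to
$$\frac1{x(v+1)^{3/2}(v+x)^{1/2}}\cdot\frac{\log(x+1)}{\log(v+2)\bigl(\log(x+1)+\log(v+2)\bigr)}\/.$$

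I expect the main obstacle to be the bookkeeping of the three logarithmic factors: one has to check that the crossover near $v\approx x$ between the regime governed by the $(1/x,1)$ integral and the one governed by the $(0,1/x)$ integral is consistent, and that in the range $v\gtrsim x$ --- where \pref{J} no longer applies directly --- the $(1/x,1)$ integral, estimated through its behaviour at $u=1/x$, is genuinely controlled by the $(0,1/x)$ contribution, so that the factor $\log(x+1)/\bigl(\log(v+2)(\log(x+1)+\log(v+2))\bigr)$ degenerates to the correct $1/\log^2(v+2)$ there. Matching the constants across the boundary $x=2$ is routine.
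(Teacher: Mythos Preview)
Your proposal is correct and follows essentially the same route as the paper: reduce to $w_\lambda=w_{2,\lambda}$ when $\mu=0$, split $\int_0^\infty$ at $1/x$ and $1$, estimate each piece via Lemmas~\ref{S1:estimate:lemma} and~\ref{logestimate:lemma}, and then compare the three pieces in the ranges $v\lesssim 1$, $1\lesssim v\lesssim x$, $v\gtrsim x$. The only notable difference is your handling of the $(1/x,1)$ integral for $v>x$: you propose a direct endpoint estimate, whereas the paper simply bounds this piece by extending the integration to $(0,1)$ and invoking \pref{J} with $a=0$; your route is in fact the more careful one here, since the crude extension yields an upper bound of order $x^{-1/2}v^{-3/2}/\log v$, which is \emph{not} dominated by the $(0,1/x)$ contribution when $v\gg x$, while the exponential suppression $e^{-v/x}$ captured by your endpoint argument is exactly what is needed.
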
  
\begin{proof}
Let us denote 
\begin{eqnarray*}
   f(x,v) =  \frac{1}{x(v+1)^{3/2}(v+x)^{1/2}}\frac{\log (x+1)}{\log(v+2)(\log(x+1)+\log(v+2))}\/.
\end{eqnarray*}
  We write
\begin{eqnarray*}
   \int_0^\infty \frac{S_0(x,u)\,e^{-\lambda u}}{K_0^2(u)+\pi^2I_0^2(u)}\,e^{-vu} udu &=& \left(\int_0^{1/x}+\int_{1/x}^1+\int_1^\infty\right) \frac{S_0(x,u)\,e^{-\lambda u}}{K_0^2(u)+\pi^2I_0^2(u)}\,e^{-vu} udu\\
   &=& J_1(x,v)+J_2(x,v)+J_3(x,v).
\end{eqnarray*}
 The estimates of $J_3(x,v)$ are exactly the same as the corresponding estimates proved in Lemma \ref{w2:estimate:lemma}. 
 Hence,  for $x\geq 2$, 
 \begin{eqnarray}
   J_3(x,v) &\approx& 
    \frac{1}{\sqrt{x}}\frac{e^{-(v+2)}}{(v+2)^2}\/,\quad v>0\/,\label{I31}
 \end{eqnarray}
and  for $1<x<2,$ 
 \begin{eqnarray} %
   J_3(x,v) &\le & c\frac{\lambda}{(v+1)^2}e^{-(v+1)}.\label{I32}
 \end{eqnarray}
 
 To estimate $J_2(x,v)$ for $x>2$ observe that for $xu\geq 1$ and $u<1$, $S_0(x,u)\approx \dfrac{e^{xu}}{\sqrt{xu}}K_0(u)\approx \dfrac{e^{xu}}{\sqrt{xu}}(1-\log u)$. This follows from    the asymptotic expansions for $I_0$ and $K_0$ (see (\ref{I_atzero}) and (\ref{K_atzero})), and  Lemma \ref{S1:estimate:lemma}. Thus
 \begin{eqnarray}
   J_2(x,v) &\approx& \frac{1}{x^{1/2}}\int_{1/x}^1 \frac{K_0(u)e^{u}e^{-vu}u^{1/2}}{K_0^2(u)+\pi^2I_0^2(u)}\,du \approx  \frac{1}{x^{1/2}}\int_{1/x}^1 \frac{e^{-vu}u^{1/2}}{1-\log u}\,du \nonumber \\
&\approx& \frac{1}{x^{1/2}}\frac{1-1/x}{(v+1)^{3/2}(1+\log (v+1))},\ v\le x,  \label{I21}
 \end{eqnarray}
 where the last step is a consequence of (\ref{J}) with $a=1/x$. Next, 
 
  \begin{eqnarray}
   J_2(x,v) &\approx& \frac{1}{x^{1/2}}\int_{1/x}^1 \frac{K_0(u)e^{u}e^{-vu}u^{1/2}}{K_0^2(u)+\pi^2I_0^2(u)}\,du \le  c\frac{1}{x^{1/2}}\int_{0}^1 \frac{e^{-vu}u^{1/2}}{1-\log u}\,du \nonumber \\
&\le& c\frac{1}{x^{1/2}}\frac{1}{(v+1)^{3/2}(1+\log (v+1))},\ v>0,  \label{I22}
 \end{eqnarray}
  where, again,  the last step is a consequence of (\ref{J}) with $a=0$.
 
 For $x<2$ and  $1/x\le u<1$, by Lemma \ref{S1:estimate:lemma}, we have $S_0(x,u)\approx \lambda$ hence 
 
 \begin{eqnarray}
   J_2(x,v) &\approx& \lambda \int_{1/x}^1 \frac{e^{-vu}u}{K_0^2(u)+\pi^2I_0^2(u)}\,du \approx \lambda \int_{1/x}^1 \frac{e^{-vu}u}{1+\log^2 u}\,du. \label{I24} 
 \end{eqnarray}

 Finally, for $0<u<1/x$ we have, by Lemma \ref{S1:estimate:lemma}, $S_0(x,u)\approx \log x,\ x>0$   and consequently using (\ref{I}), with $a=1/x$, we obtain
 \begin{eqnarray}
    J_1(x,v) &\approx& \log x\int_0^{1/x} \frac{e^{-\lambda u}e^{-vu}u}{K_0^2(u)+\pi^2I_0^2(u)}\,du \approx \log x\int_0^{1/x} \frac{e^{-vu}u}{1+\log^2 u}\,du \label{I11}.\\
   &\approx&  \frac{\log x}{(v+x)^2(\log^2 (v+x)+1)}.\label{I12}
 \end{eqnarray}

 We are now ready to estimate the function $-w_\lambda(v)$. At first  we  consider $1<x<2$. Taking into account that $\log x\approx \lambda$, using  (\ref{I24}) and (\ref{I11}) w arrive at 
 \begin{eqnarray}
  J_1(x,v)+ J_2(x,v) &\approx& \lambda \int_{0}^1 \frac{e^{-vu}u}{1+\log^2 u}\,du \\
&\approx& \frac{\lambda}{(v+1)^2(\log^2 (v+1)+1)}. 
  \end{eqnarray}
 Combining this with (\ref{I32}) we have for $1<x<2$,
 
 \begin{eqnarray*}
    -w_\lambda(v) &=& \frac{1}{\lambda}(J_1(x,v)+ J_2(x,v)+ J_3(x,v))  \approx \frac{1}{\lambda}(J_1(x,v)+ J_2(x,v))\\
     &\approx& \frac{1}{(v+1)^2(\log^2 (v+1)+1)} \approx  f(x,v)\/.
 \end{eqnarray*}

 Taking into account (\ref{I31}), (\ref{I21}) and (\ref{I12}), we infer that for $0<v<2$ and $x>2$, $J_1(x,v)+ J_2(x,v)\le c J_3(x,v),$ which yields
 \begin{eqnarray*}
    -w_\lambda(v) = \frac{1}{\lambda}(J_1(x,v)+ J_2(x,v)+ J_3(x,v))  \approx \frac{1}{\lambda}J_3(x,v) \approx \frac{1}{x^{3/2}}\approx  f(x,v)\/.
 \end{eqnarray*}
 For $2 \leq v<x$, by (\ref{I12}), (\ref{I31}) and (\ref{I21}), $J_1(x,v)+ J_3(x,v)\le c J_2(x,v)$. Hence,
 \begin{eqnarray*}
    -w_\lambda(v) = \frac{1}{\lambda}(J_1(x,v)+ J_2(x,v)+ J_3(x,v)) \approx \frac{1}{\lambda} J_2(x,v) \approx \frac{1}{x^{3/2}v^{3/2}\log v}\approx  f(x,v)\/.
 \end{eqnarray*}
 Finally for $v\ge x>2$, by (\ref{I22}), (\ref{I12}) and (\ref{I31}) we have $J_2(x,v)+J_3(x,v)\leq c J_1(x,v)$ for some constant $c>1$ and consequently
 \begin{eqnarray*}
    -w_\lambda(v) = \frac{1}{\lambda}(J_1(x,v)+ J_2(x,v)+ J_3(x,v)) \approx \frac{1}{\lambda} J_1(x,v) \approx \frac{\log x}{xv^2\log^2v}\approx f(x,v).
 \end{eqnarray*}
  The proof is completed.
\end{proof} 

 \bibliography{bibliography}
\bibliographystyle{plain}

\end{document}